\DeclareFontFamily{U}{txsyc}{}
\DeclareFontShape{U}{txsyc}{m}{n}{
   <-> txsyc%
}{}
\DeclareFontShape{U}{txsyc}{bx}{n}{
   <-> txbsyc%
}{}
\DeclareFontShape{U}{txsyc}{l}{n}{<->ssub * txsyc/m/n}{}
\DeclareFontShape{U}{txsyc}{b}{n}{<->ssub * txsyc/bx/n}{}
\DeclareSymbolFont{symbolsC}{U}{txsyc}{m}{n}
\DeclareMathSymbol{\df}{\mathrel}{symbolsC}{"42}
\DeclareMathSymbol{\fd}{\mathrel}{symbolsC}{"43}
\DeclareMathSymbol{\lJoin}{\mathrel}{symbolsC}{"58}
\DeclareMathSymbol{\rJoin}{\mathrel}{symbolsC}{"59}
\newcommand{\cA}{{\cal A}}
\newcommand{\cE}{{\cal E}}
\newcommand{\cP}{{\cal P}}
\newcommand{\cV}{{\cal V}}
\newcommand{\EE}{\mathbb{E}}
\newcommand{\NN}{\mathbb{N}}
\newcommand{\RR}{\mathbb{R}}
\newcommand{\TT}{\mathbb{T}}
\newcommand{\ZZ}{\mathbb{Z}}
\newcommand{\frd}{\mathfrak{d}}
\newcommand{\fL}{\mathfrak{L}}
\newcommand{\fs}{\mathfrak{s}}
\newcommand{\di}{\displaystyle}
\newcommand{\iy}{\infty}
\newcommand{\lt}{\left}
\newcommand{\me}{\medskip}
\newcommand{\pa}{\partial}
\newcommand{\ri}{\rightarrow}
\newcommand{\rt}{\right}
\newcommand{\sm}{\smallskip}
\newcommand{\wi}{\widetilde}
\newcommand{\wit}{\widehat}
\newcommand{\Ent}{\mathrm{Ent}}
\DeclareMathOperator*{\esssup}{ess\,sup}
\newcommand{\ex}{\exists\ }
\newcommand{\fo}{\forall\ }
\newcommand{\Id}{\mathrm{Id}}
\newcommand{\lve}{\lt\vert}
\newcommand{\lVe}{\lt\Vert}
\newcommand{\rve}{\rt\vert}
\newcommand{\rVe}{\rt\Vert}
\newcommand{\st}{\,:\,}
\newcommand{\trace}{\mathrm{tr}}
\newcommand{\un}{\mathds{1}}
\newcommand{\Var}{\mathrm{Var}}
\newcommand{\vvv}{\vert\!\vert\!\vert}
\newcommand{\bq}{\begin{eqnarray*}}
\newcommand{\bqn}[1]{\begin{eqnarray}\label{#1}}
\newcommand{\eq}{\end{eqnarray*}}
\newcommand{\eqn}{\end{eqnarray}}
\newcommand{\wwtbp}{\par\hfill $\blacksquare$\par\me\noindent}
\newcommand{\thistitlepagestyle}{}
\newcommand{\lin}{\llbracket}
\newcommand{\rin}{\rrbracket}
\newcommand{\ttsim}{\raise.17ex\hbox{$\scriptstyle\mathtt{\sim}$}}
\newtheorem{pro}{Proposition}
\newtheorem{cor}[pro]{Corollary}
\newtheorem{lem}[pro]{Lemma}
\newtheorem{theo}[pro]{Theorem}
\newtheorem{remark}[pro]{Remark}
\renewcommand{\thepro}{\arabic{pro}}
\newenvironment{con}
{\par\me\refstepcounter{pro}\noindent{\bf Conjecture \thepro\ }}
{\par\hfill $\square$\par\sm\noindent}
\newenvironment{deff}
{\par\me\refstepcounter{pro}\noindent{\bf Definition \thepro\ }}
{\par\hfill $\square$\par\sm\noindent}
\newenvironment{rem}{\par\me\refstepcounter{pro}\noindent{\bf Remark \thepro\ }}
{\par\sm\noindent}
\newcommand{\proof}{\par\me\noindent\textbf{Proof: }} 
\newcommand{\prooff}[1]{\par\me\noindent\textbf{#1}\par\sm\noindent}
\newcommand{\esD}{\mathbf{D}}
\newcommand{\esL}{\mathbf{L}}
\newcommand{\esB}{\mathbf{B}}
\newcommand{\esF}{\mathbf{F}}
\newcommand{\esC}{\mathbf{C}}
\newcommand{\esFf}{\esF_{\hskip-.3mm\mathrm{f}}}
\renewcommand{\liminf}{\mathop{\underline{\lim}}\limits}
\renewcommand{\limsup}{\mathop{\overline{\lim}}\limits}
\newcommand{\lb}{\left (}
\newcommand{\rb}{\right )}
 \renewcommand{\L}{\mathbf{L}}
    \def\C{{\mathbb C}}
    \def\P{{\mathbb P}}
    \def\E{{\mathbb E}}
    \def\N{{\mathbb N}}
    \def\i{\textnormal {i}}
    \def\d{{\textnormal d}}
    \def\R{{\mathbb R}}
\newcommand{\discret}[1]{\mathds{#1}}
\newcommand{\cont}[1]{{#1}}
\newcommand{\A}{\mathbf{A}}
\newcommand{\Gate}[3]{#1\stackrel{#2}{\curvearrowright}#3}
\newcommand{\inn}[2]{\langle #1, #2 \rangle}
\newcommand{\Cmir}[4]{#1\stackrel{#2}{\curvearrowright}#3\stackrel{#4}{\curvearrowright}#1}
\newcommand{\wu}{\tau}
\newcommand{\bwu}{\boldsymbol{\wu}}
\newcommand{\Pwu}{P_\wu}
\newcommand{\Pswu}{P^{\bwu}}
\newcommand{\wPwu}{\wi{P}_\wu}
\newcommand{\wPswu}{\wi{P}^{\bwu}}
\newcommand{\cmir}{interweaving relation}
\newcommand{\cmirs}{interweaving relations}
\newcommand{\Mwu}{\P(\wu \in dt)}
\newcommand{\LaLa}{\Lambda_{{\beta_{\varepsilon}}}}
\newcommand{\LaLad}{\Lambda^*_{\beta_{\varepsilon}}}
\newcommand{\gdbeta}{{\beta+\varepsilon}}
\newcommand{\ptbeta}{\varepsilon}
\newcommand{\deltabeta}{\beta}
\title{On  interweaving relations}
\author{Laurent Miclo${}^\dagger$ and Pierre Patie${}^\ddagger$}
 \date{\vbox{\copy0
\vskip5mm
\copy1
}
 }
\begin{document}

\setbox0=\vbox{
\large
\begin{center}
Toulouse School of Economics, UMR 5314\\
Institut de Mathématiques de Toulouse, UMR 5219,\\
CNRS and University of Toulouse
\end{center}
}

 \setbox1=\vbox{
 \large
 \begin{center}
 ${}^\ddagger$ School of Operations Research and Information Engineering,\\
 Cornell University \\
\end{center}
}
\setbox5=\vbox{
\hbox{miclo@math.cnrs.fr\\}
\vskip1mm
\hbox{Toulouse School of Economics, \\}
\hbox{Manufacture des Tabacs, 21, Allée de Brienne\\}
\hbox{31015 Toulouse cedex 6, France\\}
\vskip1mm
\hbox{Institut de Mathématiques de Toulouse\\}
\hbox{Université Paul Sabatier, 118, route de Narbonne\\}
\hbox{31062 Toulouse cedex 9, France\\}
\hbox{miclo@math.univ-toulouse.fr\\}
}
\setbox6=\vbox{
\hbox{${}^\ddagger$ pp396@cornell.edu\\}
\vskip1mm
\hbox{School of Operations Research and Information Engineering\\}
\hbox{Cornell University\\}
\hbox{Ithaca, NY 14853\\}
\hbox{USA\\}
 }

\maketitle
\thistitlepagestyle
\abstract{
Interweaving relations are introduced and studied here in a general Markovian setting as a strengthening of usual
intertwining relations between  semigroups, obtained by adding a randomized delay feature. They provide a new classification scheme of the set of Markovian semigroups which  enables to transfer from a reference semigroup and up to an independent warm-up time, some ergodic, analytical and mixing properties  including the $\varphi$-entropy convergence to equilibrium, the hyperboundedness and when the warm-up time is deterministic the cut-off phenomena. We also present several useful transformations that preserve  interweaving relations.  
We provide a variety of examples of  interweaving relations ranging from classical, discrete, and non-local Laguerre and Jacobi semigroups to degenerate hypoelliptic Ornstein-Uhlenbeck semigroups and some non-colliding particle systems.
}

{\small
\textbf{Keywords: }
 interweaving relations, Laguerre processes, hypoelliptic diffusions,  entropic convergence to equilibrium, hyperboundedness.
\par
\vskip.3cm
\textbf{MSC2010:} primary: 47D07, secondary: 60J25, 60J27, 46E30, 37A25, 60G18, 33C45.
}\par

\tableofcontents


\section{Introduction and main results}

Comparison and classification  are  traditional mathematical tools to transfer information from a reference object to more complex ones.
The goal of this paper is to develop this  framework in the study of Markov semigroups by  introducing the notion of  interweaving  as a refinement of the usual concept of  intertwining. Anticipating the formal definition given below, an interweaving relation between two Markov semigroups can be seen as a symmetric (or a two-sided) intertwining relations between them with the additional feature that the two Markovian intertwining kernels factorize one of the semigroup considered at  a  random time.

The recent years have witnessed the ubiquity and usefulness of intertwining relations in the study of Markov processes. Indeed, this concept which traces back
to the works of Dynkin \cite{Dynkin} and Rogers and Pitman \cite{RP} yielding, in that later case,  at the relationship between
a Brownian motion in $\mathbb{R}^n$ and its radial part, the Bessel process of dimension $n$, has  been, for instance, used by Diaconis and Fill \cite{MR1071805} in relation  with strong stationary  times, by Carmona, Petit and Yor \cite{MR1654531} in relation to the so-called self-similar  saw tooth-processes, extended by Patie and  Savov in \cite{Patie-Savov-GeL, Patie-Savov-BG}  to general self-similar positive Markov processes, by Miclo \cite{miclo:hal-01281029} in connection with the algebraic concept of similarity transform, by Fill \cite{Fi} for an elegant characterization of
the distribution of the first passage time of some Markov chains, by Borodin and Olshanski \cite{BO,BO2} for the construction of Markov processes on infinite dimensional spaces, by 
S.~Pal and M.~Shkolnikov \cite{PalS} for diffusions,  by Patie and Simon \cite{Patie-Simon} and Patie and Zhao \cite{Patie-Zhao} in relation with fractional operators.

The concept of interweaving will reinforce this line of research by proposing further developments in the investigation of general Markov processes.  Although additional applications can certainly be developed, we will primarily focused on the study of ergodic, analytical and mixing properties of Markov semigroups including, for instance,   convergence to equilibrium in the sense of $\varphi$-entropy, hyperboundness properties and cut-off phenomena. Our  range of examples will be very broad as it encompasses some discrete Markov chains, classical linear diffusions, some denegenerate hypoelliptic diffusions, stochastic dynamics on partitions and some Markov processes with jumps.

\par\me


Let us now proceed  with the formal definition of interweaving relations between Markov semigroups. Consider a (measurable) Markov kernel semigroup $P\df(P_t)_{t\geq 0}$  on a measurable state space $(V,\cV)$.
Namely, $P$ is a Markov kernel from $\RR_+\times V$ to $V$:
for any $A\in \cV$, the function $\RR_+\times V\ni (t,x)\mapsto P_t(x,A)$ is measurable
and for any $(t,x)\in \RR_+\times V$, the mapping $\cV\ni A\mapsto P_t(x,A)$ is a probability measure.
The semigroup property asserts that for any $t,s\geq 0, \, P_{t}P_s=P_{t+s}$, in the sense of the composition of Markov kernels  from $V$ to $V$. 
Let now $\wi P\df(\wi P_t)_{t\geq 0}$ be another Markov semigroup on a measurable state space $(\wi V,\wi\cV)$.
We say there is a (Markov) \textbf{intertwining relation} from $P$ to $\wi P$ when there exists a Markov kernel $\Lambda$ from $V$ to $\wi V$ such that
\bqn{inter}
\fo t\geq 0,\qquad P_t\Lambda&=&\Lambda \wi P_t.\eqn
It will be convenient to denote $\Gate{P}{\Lambda}{\wi P}$ this
commutation property (or $\Gate{P_t}{\Lambda}{\wi P_t}$ for the relation between Markov kernels for a fixed $t\geq 0$).
Such a link may not say much. For instance when $\wi P$ admits an invariant probability $\wi\nu$, \eqref{inter} is satisfied by considering the Markov kernel $\Lambda=\wi\nu$
defined by
\bqn{wimu}
\fo x\in V,\,\fo \wi A\in\wi\cV,\qquad\Lambda(x,\wi A)&=& \wi \nu(A).\eqn
\par
The intertwining relation \eqref{inter} is said to be \textbf{symmetric}  when  there exists another Markov kernel $\wi\Lambda$ from $\wi V$ to $V$ such that
$\Gate{\wi P}{\wi \Lambda}{P}$.
A more meaningful notion is the following one.
\begin{deff}
  We say that $\cont{P}$ has an  \textbf{interweaving relation}  with  $\wi{P}$ if there exist two Markov kernels ${\Lambda}$ and $\wi{\Lambda}$ and a non-negative  random variable $\wu$ such that
  \begin{eqnarray}\label{eq:cmir}
  && \Cmir{\cont{P}}{{\Lambda}}{\wi{P}}{\wi{\Lambda}}    \\
  \Lambda \wi{\Lambda} &  =&\Pwu=\int_{0}^{\infty} P_t \: \Mwu. \label{warm-up}
  \end{eqnarray}
We call $\wu$ the \textbf{warm-up time} or the \textbf{delay} and we  write  $\cont{P} \looparrowleft \wi{P}$  or $\cont{P} \stackrel{\wu}{\looparrowleft} \wi{P}$  to emphasize the dependency on $\wu$. Note that when $\wu=\delta_{{t}_0}$ is the degenerate random variable at  ${t}_0>0$, we may simply write, when there is no confusion,  $\cont{P} \stackrel{{t_0}}{\looparrowleft} \wi{P}$.

\noindent When $\wu$ is in addition infinitely divisible we say that $\cont{P}$ admits an \textbf{interweaving relation with an infinitely divisible} warm-up time (for short \textbf{IRID}) with  $\wi{P}$ and  we  write   $\cont{P} \stackrel{\boldsymbol{\wu}}{\looparrowleft} \wi{P}$.

\noindent Finally, when
we also have
\begin{equation} \label{eq:sym}
\wi\Lambda\Lambda=\wPwu
\end{equation}
we say that  there is a \textbf{symmetric  interweaving relation} between $P$ and $\wi P$ and we write $\cont{P} \stackrel{\wu}{\leftrightsquigarrow} \wi{P}$ (resp.~$\cont{P} \stackrel{\boldsymbol{\wu}}{\leftrightsquigarrow} \wi{P}$ when $\wu$ is infinitely divisible).
  \end{deff}
 Note that due to our measurability assumption above on the kernel $P$, the integrand in the r.h.s.\ of \eqref{eq:cmir} is measurable with respect to $t>0$ and the identity can be understood as the  Markov kernel on $(V,\cV)$ defined by
\bq
\fo x\in V,\,\fo A\in\cV,\qquad \Pwu(x,A)&\df& \int_0^{+\iy} P_t(x,A)\, \Mwu \eq
\par
 The notion of interweaving is related to completely monotone functions. Indeed,   observe that
  \begin{equation} \label{eq:subP}
   \Pwu=\int_{0}^{\infty} e^{-tL} \Mwu=F(L)
   \end{equation}
   where $L$ is the infinitesimal  generator of $P$ and $F$ as the Laplace transform of positive measure is, by Bochner classical result,  a completely monotone function, i.e.~$F\in C^{\infty}(\R_+)$ and $(-1)^{n}F^{(n)}(x)\geq 0$ for all $n\in \N$ and $x\geq0$. Next,
we recall that a random variable $\wu$ is said to be infinitely divisible if for each $N\in \N$, there exits a sequence of i.i.d.~random variables $(\wu_{n})_{1\leq n \leq N}$ such that, in distribution, $\wu \stackrel{(d)}{=}\wu_1+\ldots \wu_n$.
  Note that when $\wu$ is in addition infinitely divisible then there exists a Bernstein function $\phi$, i.e.~$\phi(0)\geq 0$ and $\phi'$ is completely monotone, such that, in \eqref{eq:subP} above, $F=e^{-\phi}$. Moreover, in such a case, there exists an unique convolution semigroups on $\R^+$ whose transition kernel is the law of a subordinator  $\boldsymbol{\wu}=(\wu_t)_{t\geq 0}$, a non-decreasing  L\'evy process, such that $\wu\stackrel{(d)}{=}\wu_1$ and
   $\Pswu=(\Pswu_t)_{t\geq 0}$ is a Markov semigroup, where for any bounded Borelian function $f$ and $t\geq 0$,
 \begin{equation}\label{def:Ptau}\Pswu_tf = \int_{0}^{\infty} P_sf \: \P(\wu_t \in ds). \end{equation}
 $\Pswu$ is the subordination  of $P$ in the sense of Bochner   and we have $\Pswu_1=\Pwu$.
 The  definition of interweaving can be summarized by
the following commutative diagram (suggesting the name of interweaving), holding for every $t\geq 0$:
\begin{figure}[H]\centering
\begin{tikzcd}[scale=2]
V\arrow[r, "P_t"] \arrow[d, "\Lambda"'] \arrow[dd, bend right=50 , "\Pwu"']
& V\arrow[d, "\Lambda" ]  \arrow[dd, bend left=50 , "\Pwu"]\\
\wi V\arrow[r, "\wi P_t" ]\arrow[d, "\wi\Lambda"']
&\wi V\arrow[d, "\wi\Lambda" ]\\
V\arrow[r, "P_t"]
& V
\end{tikzcd}
\caption{Interweaving relations with $\Lambda\wi\Lambda=\Pwu$}\label{fig1}
\end{figure}
\par
Our objective in this paper is to provide some properties and investigate some applications  of interweaving relations  in the study of probabilistic and analytical properties of general Markov processes. 
Before presenting its range of applications, let us present a few general observations about this concept.

\subsubsection*{Some general comments on interweaving relations}

\begin{enumerate}[(a)]\label{1errems}
\item  The above Markov framework is quite plain. There are several ways to enrich it, especially to associate a generator $L$ to the semigroup $P$, since this is in general
the simplest way to describe $P$. 
Analytically, the semigroup $P$ can be acting on a Banach space, in the sense of Hille-Yosida theory, see e.g.\ the book of Yosida \cite{MR96a:46001}.
One standard choice, when $P$ admits an invariant probability $\nu$, is to consider the Hilbert space $\esL^2(\nu)$.
Another possibility, when the state space $V$ is endowed with a $\sigma$-compact topology, is to consider the space of continuous functions vanishing at infinity, endowed with the supremum norm.\par
From a probabilistic point of view, the generator $L$ appears in the formulation of an underlying martingale problem
for the trajectories $X\df (X_t)_{t\geq 0}$ of an associated Markov process
(cf.\ for instance the book of Ethier and Kurtz \cite{MR838085}). Usually the state space $V$ is endowed with a topology and the trajectories are càdlàg,
in particular the position $X_t$ converges to $X_0$ as $t$ goes to $0_+$.
\par
The examples considered in this paper will be described  through their generators. All will admit an invariant measure which will be a probability measure, except for the squared Bessel processes and some related examples,
 and thus the $\esL^2$ setting and the martingale problems will be equivalent.\par
 As $t$ goes to zero and in the appropriate senses dictated by the above analytical or probabilist frameworks, $P_t$ converges to the identity operator $\Id$, seen as the transition kernel corresponding to no motion.
\item  When  the generators $L$ and $\wi L$ are available for the semigroups $P$ and $\wi P$, e.g.\ in one of the meanings seen in (a),
the intertwining relation \eqref{inter} is often equivalent to $L\Lambda=\Lambda \wi L$, where the Markov kernel $\Lambda$
has to be seen as an operator from $\esD(\wi L)$ to (a subset of) $\esD(L)$, the respective domains of the generators.
When the intertwining relation is symmetric, see \eqref{eq:sym}, we should have that the image of $\esD(L)$ by $\wi \Lambda$ is  included in $\esD(\wi L)$,
in particular for interweaving relations, the l.h.s.\ of \eqref{warm-up} can also be seen as an operator from $\esD(L)$ to itself which can be ``extended'' into $\Pwu$, a priori acting on
$\esB(V)$, the space of bounded measurable functions on $V$. 
\item  One way to avoid the case \eqref{wimu} is to ask for $\Lambda$ to be one-to-one, e.g.\ as an operator from $\esB(\wi V)$  to $\esB(V)$ (but when $\wi V$ is not discrete, this is often requiring too much). Somewhat the requirement \eqref{warm-up} also goes in this direction: in the ``regular'' situations described above in (a), $P_t$ converges to $\Id$ for small $t>0$
and thus should end up being invertible in this asymptotic. This should still be true for $\Pwu$ when $\wu$ has a distribution concentrated near $0$ and in particular $\Lambda$ would be one-to-one and $\wi \Lambda$ would be surjective.
In the case of a symmetric interweaving relation  with
a warm-up variable $\wu$
on $\RR_+$ concentrated near $0$, we can expect $\Lambda$ and $\wi\Lambda$ to be both invertible.
That is why, more generally and heuristically, we see symmetric
interweaving    as a Markovian formulation of a weak invertibility assumption on $\Lambda$ and $\wi\Lambda$, resulting in $P$ and $\wi P$ being closely related.
In the same spirit, the  more mass the law of $\wu$ gives to neighborhoods of $0_+$, the more informative \eqref{warm-up} is, as the ``invertibility of $\Pwu$ should be stronger''.
Conversely, assuming that $P$ is ergodic with invariant probability measure $\nu$, we have that for large $t\geq 0$, $P_t$ is converging to $\nu$ (seen as a Markov kernel as in \eqref{wimu}).
It follows that the more the law of $\wu$ is concentrated on large values, the less informative \eqref{warm-up} becomes.
This interpretation will be strengthened when we will see $\wu$ as  a random warm-up time. 
\item  From a
spectral point of view and in the regular settings of (a), the meaning of an interweaving relation   from $P$ to $\wi P$
seems to be that the spectrum of the generator $L$ of $P$ is included into the spectrum of the generator $\wi L$ of $\wi P$, at least under appropriate ergodicity assumptions on $P$ and when the spectrum is be understood in an extended sense.
We will not enter into the underlying technicalities here, so let us just mention a conjecture that we hope to investigate in a future work:
\begin{con}\label{con1}
Consider  two irreducible Markov generators $L$ and $\wi L$ on finite
 state spaces $V$ and $\wi V$.
There exists a interweaving relation   from  $(\exp(tL))_{t\geq 0}$ to $(\exp(tL))_{t\geq 0}$
if and only if the extended spectrum of $L$ is included into that of $\wi L$. By extended spectrum, we mean the eigenvalues as well as the dimensions of the associated Jordan blocks (inclusion implying smaller or equal dimensions).
\end{con}
Such a result and possible
 extensions to more general state spaces would provide a spectral understanding of why interweaving relations  enable to deduce quantitative informations on the
convergence to equilibrium for $P$ from similar knowledge from $\wi P$. 
\item  Assume an intertwining relation $\Gate{P}{\Lambda}{\wi P}$ and that $P$ and $\wi P$ admit  reversible probability measures $\mu$ and $\wi\mu$, with
$\mu\Lambda=\wi\mu$.
Working in the $\esL^2$ framework mentioned above in (a), we get by duality an intertwining relation $\Gate{\wi P}{\Lambda^*}{P}$.
A priori $\Lambda^*\st \esL^2(\mu)\ri \esL^2(\wi\mu)$ is an abstract Markov operator, in the sense that it preserves non-negativity and the function always taking the value 1.
To get a ($\esL^2$-)interweaving relation, it remains to check that $\Lambda\Lambda^*=\Pwu$. Thus in such a reversible setting, interweaving relations  are relatively easy to deduce from intertwining relations.
\item  Assume that we have a symmetric intertwining relation between two semigroups $P$ and $\wi P$, namely
$\Gate{P}{\Lambda}{\wi P}$ and $\Gate{\wi P}{\wi\Lambda}{P}$ for some Markov kernels $\Lambda$ and $\wi\Lambda$.
Then necessary $\Lambda\wi\Lambda$ commutes with all the $P_t$ for $t\geq 0$.
Assume that $P$ admits a generator $L$ which is diagonalizable with eigenvalues of multiplicities one.
When functional calculus is available, we deduce that $\Lambda\Lambda^*$ is of the form
$F(-L)$, where $F\st \RR_+\ri \RR$ is a measurable mapping. To get a interweaving relation is then equivalent to $F$ being completely monotone.
\item  The symmetric interweaving relation  does not correspond to the symmetrization of the interweaving relation,
which is only requiring two interweaving relations, one from $P$ to $\wi P$ and one from $\wi P$ to $P$.
For the latter,
 the kernels from $\wi V$ to $ V$ and from $V$ to $\wi V$
 may be different from $\wi\Lambda$ and $\Lambda$,
as well as the warm-up time from $\wu$.
Some results below can be extended from symmetric to symmetrized interweaving relations
But the notion of symmetric interweaving relation  is natural because of Proposition \ref{symmetric} below.
\end{enumerate}
\par\me

\subsection{Basic properties of interweaving relations} \label{sec:basic_prop}
We present  now some useful transformations of semigroups that  preserve  interweaving relations and postpone their proofs to Section \ref{poTaP}.
We start with the following  result that enables to construct from an IRID  with a random warm-up time a interweaving relation with the constant $1$ as  warm-up time. This observation will be useful in some applications of interweaving relations for which the assumption of  deterministic warm-up time is required.
\begin{theo}\label{theo3b}
Assume that $\cont{P} \stackrel{\bwu}{\looparrowleft} \wi{P}$, that is the warm-up time $\wu$ is infinitely divisible. Then $\Pswu \stackrel{1}{\looparrowleft} \wPswu   $ where  $\bwu=(\wu_t)_{t\geq 0}$ is the subordinator 
such that $\wu\stackrel{(d)}{=}\wu_1$ and the subordinated semigroups are defined as in \eqref{def:Ptau}.
\end{theo}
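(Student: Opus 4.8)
The plan is to verify the three defining requirements of an interweaving relation directly, keeping the \emph{same} intertwining kernels $\Lambda$ and $\wi\Lambda$ and taking the warm-up time to be the constant $1$. We are given $\Cmir{P}{\Lambda}{\wi P}{\wi\Lambda}$, i.e.\ $P_s\Lambda=\Lambda\wi P_s$ and $\wi P_s\wi\Lambda=\wi\Lambda P_s$ for every $s\geq 0$, together with $\Lambda\wi\Lambda=\Pwu=\int_0^\infty P_s\,\P(\wu\in ds)$. Since $\wu$ is infinitely divisible, the subordinator $\bwu=(\wu_t)_{t\geq0}$ with $\wu\stackrel{(d)}{=}\wu_1$ exists, and both $\Pswu_tf=\int_0^\infty P_sf\,\P(\wu_t\in ds)$ and $\wPswu_tf=\int_0^\infty \wi P_sf\,\P(\wu_t\in ds)$ are Markov semigroups (the statement about $\Pswu$ is recalled in the text, and the same argument applies to $\wi P$ subordinated by the same $\bwu$).

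First I would dispatch the warm-up identity. Because $\wu_1$ has the same law as $\wu$, the definition of the subordinated semigroup at time $1$ gives $\Pswu_1=\int_0^\infty P_s\,\P(\wu_1\in ds)=\int_0^\infty P_s\,\P(\wu\in ds)=\Pwu=\Lambda\wi\Lambda$, which is exactly the relation \eqref{warm-up} with the deterministic warm-up time $1$.

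Next I would establish the two commutation relations, i.e.\ $\Gate{\Pswu}{\Lambda}{\wPswu}$ and $\Gate{\wPswu}{\wi\Lambda}{\Pswu}$, which together with the previous step yield $\Pswu\stackrel{1}{\looparrowleft}\wPswu$. The mechanism is to ``average'' the relations $P_s\Lambda=\Lambda\wi P_s$ against the law $\P(\wu_t\in ds)$. Writing the composition of Markov kernels out pointwise and using the joint measurability in $s$ of $(s,x,A)\mapsto P_s(x,A)$, $\wi P_s(\cdot,\cdot)$ and of the Stieltjes kernel $s\mapsto\P(\wu_t\in ds)$, one applies Tonelli's theorem twice (everything is non-negative, being built from Markov kernels) to interchange the $s$-integration first with the integration against $\Pswu_t(x,dy)$ and then with the integration against $\Lambda(x,d\wi y)$, obtaining for every $x$ and every measurable set
\[
(\Pswu_t\Lambda)(x,\cdot)=\int_0^\infty (P_s\Lambda)(x,\cdot)\,\P(\wu_t\in ds)=\int_0^\infty (\Lambda\wi P_s)(x,\cdot)\,\P(\wu_t\in ds)=(\Lambda\wPswu_t)(x,\cdot),
\]
and symmetrically $\wPswu_t\wi\Lambda=\wi\Lambda\Pswu_t$ from $\wi P_s\wi\Lambda=\wi\Lambda P_s$.

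The calculation is essentially bookkeeping, and the substantive content is conceptual rather than technical: the construction trades a randomized delay for a deterministic one at the price of passing from $P,\wi P$ to their Bochner subordinates. The one point deserving genuine care — the mild obstacle I anticipate — is the Tonelli interchange, which is precisely where the measurability conventions on $P$ (hence on $\Pswu$, $\wPswu$ and on $s\mapsto\P(\wu_t\in ds)$) recalled in the paper are invoked to guarantee that the iterated integrals coincide. No ergodicity, no analytic input, and nothing about infinite divisibility beyond the existence of the subordinator $\bwu$ is needed.
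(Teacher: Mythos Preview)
Your proof is correct and follows essentially the same route as the paper: verify $\Lambda\wi\Lambda=\Pswu_1$ from $\wu_1\stackrel{(d)}{=}\wu$, then integrate the pointwise intertwinings $P_s\Lambda=\Lambda\wi P_s$ and $\wi P_s\wi\Lambda=\wi\Lambda P_s$ against $\P(\wu_t\in ds)$ to get the commutations for the subordinated semigroups. The paper is terser and does not spell out the Tonelli justification you flag, but the argument is the same.
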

\par\me
We point out that  in Section \ref{dwte} (resp.~Section \ref{sec:rwt}),  we present several examples for which the  warm-up time $\wu$ is a constant (resp.~a positive infinitely divisible random variable). In the applications of interweaving relations to ergodic properties, the previous result allows us to compare the approach based on interweaving relations with the classical ones based on functional inequalities.

We proceed with additional properties of interweaving relations. To simplify the forthcoming discussion,    we assume that $P$ (resp.~$\wi{P}$) is a semigroup on some Banach space $\mathbf B$ (resp.~$ \mathbf{\wi B}$), e.g.~if $P$ is a Feller semigroup then $\mathbf B= \textrm{C}_b(V)$ the space of continuous and bounded functions on $V$ endowed with the supremum topology.  

Let us now come to symmetric interweaving relations. They are a consequence of interweaving relations   under a seemingly mild additional assumption:
\begin{pro}\label{symmetric}
When the Markov kernel $\Lambda$ is one-to-one,
say from $\esB(\wi V)$ to $\esB(V)$,
then a interweaving relation  is symmetric.
\end{pro}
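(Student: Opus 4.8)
The plan is to compute the composition of Markov kernels $\Lambda\wi\Lambda\Lambda$ in two different ways, using the two pieces of the interweaving data separately, and then to cancel the outermost factor $\Lambda$ by injectivity. No genuinely hard step is involved; the argument is essentially bookkeeping.

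First I would unwind the hypotheses. The relation $\Cmir{\cont{P}}{\Lambda}{\wi P}{\wi\Lambda}$ of \eqref{eq:cmir} contains in particular $\Gate{P}{\Lambda}{\wi P}$, i.e.\ $P_t\Lambda=\Lambda\wi P_t$ for every $t\geq 0$, an identity of Markov kernels from $V$ to $\wi V$ (equivalently, of operators $\esB(\wi V)\to\esB(V)$). Moreover \eqref{warm-up} reads $\Lambda\wi\Lambda=\Pwu=\int_0^{+\infty}P_t\,\Mwu$ as Markov kernels from $V$ to $V$. Note that neither the second intertwining $\wi\Lambda P_t=\wi P_t\wi\Lambda$ nor any divisibility assumption on $\wu$ will be used.

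Next comes the computation. By associativity of composition of Markov kernels,
\[
\Lambda\,(\wi\Lambda\Lambda)\;=\;(\Lambda\wi\Lambda)\,\Lambda\;=\;\Pwu\,\Lambda .
\]
Testing this against a set $A\in\wi\cV$ and using Tonelli's theorem — legitimate thanks to the joint measurability of $(t,x)\mapsto P_t(x,\cdot)$ assumed on $P$ — one pulls $\Lambda$ through the integral and then applies the intertwining $P_t\Lambda=\Lambda\wi P_t$:
\[
\Pwu\,\Lambda=\int_0^{+\infty}(P_t\Lambda)\,\Mwu=\int_0^{+\infty}(\Lambda\wi P_t)\,\Mwu=\Lambda\int_0^{+\infty}\wi P_t\,\Mwu=\Lambda\,\wPwu .
\]
Hence $\Lambda(\wi\Lambda\Lambda)=\Lambda\,\wPwu$ as Markov kernels from $V$ to $\wi V$.

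Finally, the cancellation. Reading the last identity at the level of operators $\esB(\wi V)\to\esB(V)$, we have $\Lambda\circ(\wi\Lambda\Lambda)=\Lambda\circ\wPwu$, where both $\wi\Lambda\Lambda$ and $\wPwu$ map $\esB(\wi V)$ into $\esB(\wi V)$. Since $\Lambda$ is one-to-one on $\esB(\wi V)$, this forces $(\wi\Lambda\Lambda)g=\wPwu g$ for every $g\in\esB(\wi V)$, i.e.\ $\wi\Lambda\Lambda=\wPwu$, which is precisely \eqref{eq:sym}; so the interweaving relation is symmetric. The only point deserving a sentence of care is the interchange of $\Lambda$ with the integral over the law of $\wu$ (immediate once the kernels are tested against measurable sets), together with tracking composition order so that the factor being cancelled is exactly the $\Lambda$ assumed injective and not $\wi\Lambda$.
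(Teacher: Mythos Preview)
Your proof is correct and follows essentially the same route as the paper: compute $\Lambda\wi\Lambda\Lambda=\Pwu\Lambda$, push $\Lambda$ through the $\wu$-integral via Tonelli and the intertwining $P_t\Lambda=\Lambda\wi P_t$ to obtain $\Lambda\wPwu$, then cancel $\Lambda$ by injectivity. The only cosmetic difference is that the paper justifies the interchange by first treating nonnegative Borel functions and then decomposing into positive and negative parts, whereas you phrase it as testing against measurable sets; both are fine.
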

\par
\proof
Indeed, from \eqref{warm-up}, we deduce, first for a  non-negative Borelian function $f$ and then for a general Borelian function $f$, by writing $f=\max(f,0) - \max(-f,0)$, that
\bq\Lambda\wi\Lambda\Lambda f &=&\Pwu\Lambda f=\int_0^{+\iy}P_t\Lambda f \, \Mwu=\int_0^{+\iy}\Lambda \wi P_t\, \Mwu f
=\Lambda\wPwu  f\eq
where we used Tonnelli theorem for the last identity. The injectivity of $\Lambda$ implies that $\wi\Lambda\Lambda=\wPwu$.\wwtbp
The  one-to-one assumption of Proposition \ref{symmetric} is quite restrictive, when the state spaces are not denumerable.
Nevertheless, the simplicity of the above proof shows
it can be weakened when working in the Hille-Yosida framework mentioned in Remark \ref{1errems}(a), by considering the corresponding notion of
injectivity, in particular in $\esL^2$ spaces.
\par\me

We now proceed by showing  that,  under mild conditions, $\looparrowleft$  is an equivalence relation. This highlights the idea, triggered by this concept,  of an original classification scheme which enables to extend in a natural way to general Markov semigroups some ergodic and analytical properties  that were attainable only for some specific classes, such as reversible  diffusion ones.

\begin{theo}\label{thm:ref-cmir}
  Assume that the Markov intertwining kernels is one-to-one on  a dense subset of $\mathbf B$   then $\looparrowleft$ is an equivalence relation as
  \begin{enumerate}[(i)]
  \item \label{it:T0} $\looparrowleft$ is reflexive, that is  $\cont{P} \stackrel{0}{\looparrowleft} \cont{P}$  with $0$ the degenerate variable  at $0$. 
  \item \label{it:T1} $\looparrowleft$ is symmetric, that is if $\cont{P} \stackrel{\tau}{\looparrowleft} \wi{P}   $ then $\wi{P}   \stackrel{\tau}{\looparrowleft} \cont{P} $ with $\Cmir{\wi{P}  }{\wi{\Lambda}}{\cont{P}}{{\Lambda}}$ and $ \wi\Lambda\Lambda=\wPwu$.
  \item  \label{it:T2} $\looparrowleft$ is transitive, that is if  $\cont{P} \stackrel{\tau}{\looparrowleft} \wi{P}   $ and  $\wi{P}   \stackrel{\wi{\tau}}{\looparrowleft} \overline{P} $ then $\cont{P} \stackrel{\tau+\wi{\tau}}{\looparrowleft} \overline{P} $, where $\tau$ and $\wi{\tau}$ are assumed to be independent.
   \end{enumerate}
  \end{theo}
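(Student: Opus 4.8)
I will verify each of the three properties in turn, the key point throughout being that composition of Markov kernels respects the warm-up structure because the semigroup property of $P$ turns convolution of warm-up laws into convolution of delays. Property (\ref{it:T0}) is immediate: take $\Lambda=\wi\Lambda=\Id$ (the identity kernel on $V$), which is trivially one-to-one on a dense set and Markov. Then $\Cmir{\cont{P}}{\Id}{\cont{P}}{\Id}$ holds because $P_t\Id=\Id P_t$, and $\Id\cdot\Id=\Id=P_0=\Pwu$ with $\wu=\delta_0$; similarly $\wi\Lambda\Lambda=\Id=\wPwu$, so the relation is even symmetric with delay $0$.

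For property (\ref{it:T1}), suppose $\cont{P}\stackrel{\tau}{\looparrowleft}\wi P$ with kernels $\Lambda,\wi\Lambda$ and $\Lambda\wi\Lambda=\Pwu$. First I would invoke Proposition \ref{symmetric} (in the weakened, Hille–Yosida form noted right after it, valid because $\Lambda$ is one-to-one on a dense subset of $\mathbf B$) to conclude that the relation is in fact symmetric, i.e.\ $\wi\Lambda\Lambda=\wPwu$. But this is exactly the statement that $\wi P\stackrel{\tau}{\looparrowleft}P$: the pair of intertwinings $\Cmir{\wi P}{\wi\Lambda}{\cont P}{\Lambda}$ is just the original diagram read in the reverse vertical direction, and the composition condition \eqref{warm-up} for this reversed relation reads $\wi\Lambda\Lambda=\wPwu$, which we have just obtained. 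So (\ref{it:T1}) is essentially a reformulation of Proposition \ref{symmetric} together with the observation that the defining diagram is invariant under swapping the roles of $P$ and $\wi P$.

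For property (\ref{it:T2}), let $\cont P\stackrel{\tau}{\looparrowleft}\wi P$ via $(\Lambda,\wi\Lambda)$ with $\Lambda\wi\Lambda=\Pwu$, and $\wi P\stackrel{\wi\tau}{\looparrowleft}\overline P$ via $(\Lambda',\wi\Lambda')$ with $\Lambda'\wi\Lambda'=\wPwu[\wi\tau]$ (bars and tildes adjusted to the appropriate semigroups). The natural candidate kernels are $\Lambda\Lambda'$ from $V$ to $\overline V$ and $\wi\Lambda'\wi\Lambda$ from $\overline V$ to $V$. The two intertwinings $\Cmir{\cont P}{\Lambda\Lambda'}{\overline P}{\wi\Lambda'\wi\Lambda}$ follow by stacking: $P_t(\Lambda\Lambda')=(P_t\Lambda)\Lambda'=\Lambda(\wi P_t\Lambda')=\Lambda(\Lambda'\overline P_t)=(\Lambda\Lambda')\overline P_t$, and symmetrically for the return direction using the symmetric pieces $\wi\Lambda\Lambda=\wPwu$ etc.\ supplied by (\ref{it:T1}). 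The real computation is the composition identity: using associativity of kernel composition, the symmetry $\wi\Lambda\Lambda=\wPwu[\tau]$ from part (\ref{it:T1}), and then Fubini/Tonelli together with the semigroup property $\wi P_s\wi P_t=\wi P_{s+t}$ pushed through the intertwining,
\[
(\Lambda\Lambda')(\wi\Lambda'\wi\Lambda)=\Lambda(\Lambda'\wi\Lambda')\wi\Lambda=\Lambda\,\wPwu[\wi\tau]\,\wi\Lambda=\Lambda\!\left(\int_0^\infty\wi P_s\,\P(\wi\tau\in ds)\right)\!\wi\Lambda=\int_0^\infty(\Lambda\wi P_s\wi\Lambda)\,\P(\wi\tau\in ds).
\]
Now $\Lambda\wi P_s\wi\Lambda=P_s\Lambda\wi\Lambda=P_s\Pwu[\tau]=\int_0^\infty P_{s+u}\,\P(\tau\in du)$, so substituting and using independence of $\tau,\wi\tau$ to recognize the law of $\tau+\wi\tau$ gives $(\Lambda\Lambda')(\wi\Lambda'\wi\Lambda)=\int_0^\infty P_r\,\P(\tau+\wi\tau\in dr)=\Pwu[\tau+\wi\tau]$, which is \eqref{warm-up} for the composite relation. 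Finally one checks $\Lambda\Lambda'$ is one-to-one on a dense set: it is a composition of two kernels each one-to-one on a dense subset, and in the Hille–Yosida setting the image of a dense set under a bounded injective-on-a-dense-set operator can be arranged to stay dense in the relevant core, so the hypothesis propagates and $\cont P\stackrel{\tau+\wi\tau}{\looparrowleft}\overline P$.

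The main obstacle I anticipate is not the algebra — the convolution-of-delays computation is forced once one writes things out — but the functional-analytic bookkeeping: making precise the sense in which $\Lambda$ ``one-to-one on a dense subset of $\mathbf B$'' is preserved under composition, and ensuring all the Fubini exchanges are legitimate for the (possibly unbounded) integrals $\int_0^\infty P_t\,\P(\tau\in dt)$ acting on $\mathbf B$. I would handle this by first proving everything at the level of Markov kernels acting on bounded measurable functions (where Tonelli applies to nonnegative $f$ and the general case follows by $f=f^+-f^-$, exactly as in the proof of Proposition \ref{symmetric}), and only afterwards remarking that the identities descend to $\mathbf B$ and to the generator domains as in Remark \ref{1errems}(a)–(b).
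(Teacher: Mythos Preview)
Your proposal is correct and follows essentially the same route as the paper. For (\ref{it:T1}) you invoke Proposition~\ref{symmetric} directly, whereas the paper re-derives the identity $\Lambda(\wi\Lambda\Lambda-\wPwu)=0$ inline and appeals to injectivity; for (\ref{it:T2}) both arguments compute $(\Lambda\Lambda')(\wi\Lambda'\wi\Lambda)=\Lambda\,\wi P_{\wi\tau}\,\wi\Lambda=P_\tau P_{\wi\tau}$ and then identify this as $P_{\tau+\wi\tau}$, the paper phrasing it via the product of Laplace transforms $F(L)\overline F(L)$ while you unpack the subordination integral explicitly. Your added concern about whether $\Lambda\Lambda'$ inherits the one-to-one property is not needed for (\ref{it:T2}) as stated, since the conclusion $P\stackrel{\tau+\wi\tau}{\looparrowleft}\overline P$ only requires the warm-up identity \eqref{warm-up}, and the paper accordingly does not address it.
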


  \begin{remark}
    It is not difficult to check that if one restricts  the previous theorem to the  subset of IRID then  $\looparrowleft$ remains an equivalence relation.
   \end{remark}
   Schematically, the transitivity property of interweaving relations can be described, for any $t\geq 0$, where by rotating to $45$ degrees the figure of our previous diagrams:
 \begin{figure}[H]\centering
\begin{tikzcd}[scale=2]
V\arrow[d, "P_t"'] \arrow[r, "\Lambda"']
& \wi V \arrow[d, "\wi P_t"'] \arrow[r, "\Lambda' "'] \arrow[rr, bend left=50 , "\wi P_{\wi \tau}"']&v_\vee \arrow[ d, "\overline{P}_t"']   \arrow[r, "\wi \Lambda' "'] &\wi V \arrow[ d, "\wi P_t"'] \arrow[r, "\wi \Lambda "'] & V \arrow[ d, "P_t"']\\
V\arrow[r, "\Lambda"]
& \wi V\arrow[r, "\Lambda' "] \arrow[rr, bend right=50 , "\wi P_{\wi \tau}"]&v_\vee  \arrow[r, "\wi \Lambda' "] & \wi V \arrow[r, "\wi \Lambda "] & V
\end{tikzcd}
\caption{Transitive interweaving relations}\label{fig6}
\end{figure}
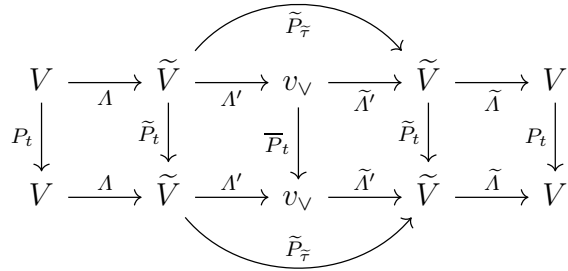

We proceed with the following theorem that provides  a closure  property of interweaving relations by similarity transform as well as a way  to \emph{transport} interweaving relations. 
\begin{theo}\label{thm:simil}\label{thm:transp}
Let us assume that  $\cont{P} \stackrel{\tau}{\looparrowleft} \wi{P} $.
\begin{enumerate}[1)]
\item  Let $P^M$ be a Markov semigroup acting on the Banach space $\mathbf B^M$ If the two Markov  $P$ and $P^M$ are similar, that is,  for all $t\geq 0$, $P_t^M = M P_t M^{-1}$ where $M$ and its inverse $M^{-1}$ are bounded operators. Then,
  \[ \cont{P^M} \stackrel{\tau}{\looparrowleft} \wi{P}   \] where,   with the obvious notation $\Lambda^M = M \Lambda$  and $\wi \Lambda^M = \wi \Lambda  M^{-1}$.
  \item  If \begin{equation}\label{eq:intpq}
  \Gate{\cont{P}}{T}{\discret{P}}  \textrm{ and }  \Gate{\wi{P}  }{T}{\wi{\discret{P}}}
  \end{equation}
  with $\discret{P}$ and $\wi{\discret{P}}$ two Markov semigroups defined on  the measurable space $(\discret{V},\mathscr{V})$  and $T$ an one-to-one Markov operator.
   Then \[\discret{P} \stackrel{\tau}{\looparrowleft} \wi{\discret{P}} \qquad \textrm{ and } \qquad  \discret{P}_\tau= {\boldsymbol{\Lambda}}\wi{\boldsymbol{\Lambda}} \] where
  \begin{equation}\label{eq:iLi}
  \Gate{\Lambda}{T}{\boldsymbol{\Lambda}} \textrm{ and }\Gate{\Lambda}{T}{\wi{\boldsymbol{\Lambda}}}.
   \end{equation}
\end{enumerate}
\end{theo}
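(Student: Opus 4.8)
The plan is to treat the two items separately; both are "diagram chasing" arguments that manipulate the defining relations \eqref{eq:cmir} and \eqref{warm-up} together with the new intertwining data, so the work is mostly bookkeeping and checking that the composite kernels one writes down are genuinely Markov kernels (or bounded operators).

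For item 1), the idea is simply to conjugate every arrow of Figure \ref{fig1} by $M$. Starting from $\Gate{P}{\Lambda}{\wi P}$, multiply on the left by $M$ and insert $M^{-1}M$: $M P_t \Lambda = M P_t M^{-1} M \Lambda = P_t^M \Lambda^M$, and $M\Lambda \wi P_t = \Lambda^M \wi P_t$, so $\Gate{P^M}{\Lambda^M}{\wi P}$. Symmetrically, from $\Gate{\wi P}{\wi\Lambda}{P}$ one gets $\wi\Lambda^M \wi P_t = \wi\Lambda M^{-1} \wi P_t = \wi\Lambda P_t M^{-1} = P_t^M \wi\Lambda M^{-1}$, wait — one writes $\wi\Lambda M^{-1} \wi P_t = M P_t M^{-1}\cdot M^{-1}$... more carefully: $P_t^M \wi\Lambda^M = M P_t M^{-1} \wi\Lambda M^{-1} = M P_t \wi\Lambda M^{-1} = M \wi\Lambda \wi P_t M^{-1}$, hmm this does not obviously equal $\wi\Lambda^M \wi P_t$ unless $M$ and $\wi P_t$ related — actually the correct computation is $\wi\Lambda^M \wi P_t = \wi\Lambda M^{-1} \wi P_t$, and since $\Gate{\wi P}{\wi\Lambda}{P}$ gives nothing about $M^{-1}\wi P_t$ directly, instead use that $\wi P_t$ commutes with $M^{-1}$ only if $\wi P^M$ is what we want; but here $\wi P$ is unchanged, so we instead verify $\Cmir{P^M}{\Lambda^M}{\wi P}{\wi\Lambda^M}$ by: $P_t^M \Lambda^M = \Lambda^M \wi P_t$ (done) and $\wi P_t \wi\Lambda^M = \wi P_t \wi\Lambda M^{-1} = \wi\Lambda M^{-1}\cdot(M\wi P_t... )$ — the clean route is to note $\wi\Lambda \wi P_t = P_t \wi\Lambda$ is false; rather $\Gate{\wi P}{\wi\Lambda}{P}$ means $\wi P_t \wi\Lambda = \wi\Lambda P_t$, hence $\wi P_t \wi\Lambda^M = \wi P_t \wi\Lambda M^{-1} = \wi\Lambda P_t M^{-1} = \wi\Lambda M^{-1}\cdot M P_t M^{-1} = \wi\Lambda^M P_t^M$. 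Finally for the warm-up, $\Lambda^M \wi\Lambda^M = M\Lambda\wi\Lambda M^{-1} = M \Pwu M^{-1} = M\bigl(\int_0^\infty P_t\,\Mwu\bigr)M^{-1} = \int_0^\infty M P_t M^{-1}\,\Mwu = \int_0^\infty P_t^M\,\Mwu = P_\wu^M$, using boundedness of $M, M^{-1}$ to pull them through the (Bochner) integral. This establishes $\cont{P^M}\stackrel{\tau}{\looparrowleft}\wi P$ with the same warm-up time $\wu$.

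For item 2), the hypothesis \eqref{eq:intpq} gives $\Gate{P}{T}{\discret P}$ and $\Gate{\wi P}{T}{\wi{\discret P}}$, i.e.\ $P_t T = T \discret P_t$ and $\wi P_t T = T \wi{\discret P}_t$, with $T$ a one-to-one Markov operator. Define $\boldsymbol\Lambda, \wi{\boldsymbol\Lambda}$ by $\Gate{\Lambda}{T}{\boldsymbol\Lambda}$ and $\Gate{\wi\Lambda}{T}{\wi{\boldsymbol\Lambda}}$ as in \eqref{eq:iLi} (I read the second relation there as $\Gate{\wi\Lambda}{T}{\wi{\boldsymbol\Lambda}}$, matching the roles), i.e.\ $\Lambda T = T\boldsymbol\Lambda$ and $\wi\Lambda T = T\wi{\boldsymbol\Lambda}$; existence of these "pushed-down" kernels on $(\discret V,\mathscr V)$ is part of the hypothesis. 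Then I chase: from $P_t\Lambda = \Lambda\wi P_t$ and the two intertwinings, $T\discret P_t\boldsymbol\Lambda = P_t T\boldsymbol\Lambda = P_t\Lambda T = \Lambda\wi P_t T = \Lambda T\wi{\discret P}_t = T\boldsymbol\Lambda\wi{\discret P}_t$, and injectivity of $T$ yields $\discret P_t\boldsymbol\Lambda = \boldsymbol\Lambda\wi{\discret P}_t$, i.e.\ $\Gate{\discret P}{\boldsymbol\Lambda}{\wi{\discret P}}$. The mirror computation starting from $\wi P_t\wi\Lambda = \wi\Lambda P_t$ gives $\Gate{\wi{\discret P}}{\wi{\boldsymbol\Lambda}}{\discret P}$, so $\Cmir{\discret P}{\boldsymbol\Lambda}{\wi{\discret P}}{\wi{\boldsymbol\Lambda}}$. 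For the warm-up identity: $T\boldsymbol\Lambda\wi{\boldsymbol\Lambda} = \Lambda T\wi{\boldsymbol\Lambda} = \Lambda\wi\Lambda T = \Pwu T = \int_0^\infty P_t T\,\Mwu = \int_0^\infty T\discret P_t\,\Mwu = T\int_0^\infty \discret P_t\,\Mwu = T\discret P_\wu$, and injectivity of $T$ gives $\boldsymbol\Lambda\wi{\boldsymbol\Lambda} = \discret P_\wu$. Hence $\discret P\stackrel{\tau}{\looparrowleft}\wi{\discret P}$ with warm-up $\wu$ and $\discret P_\tau = \boldsymbol\Lambda\wi{\boldsymbol\Lambda}$.

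The routine points to be careful about are: (i) that $\boldsymbol\Lambda, \wi{\boldsymbol\Lambda}$ are again \emph{Markov} kernels/operators (non-negativity-preserving, fixing $\un$) — this should follow because $T$ is Markov and one-to-one, but it is worth a line, perhaps invoking that $T$ identifies a closed subspace with $\esB(\discret V)$; (ii) the interchange of $M^{\pm 1}$, resp.\ $T$, with the Bochner integral $\int_0^\infty(\cdot)\,\Mwu$, which is legitimate by boundedness of the operators exactly as in the proof of Proposition \ref{symmetric} (Tonelli/dominated convergence on non-negative $f$, then split a general $f$); and (iii) that injectivity of $T$, respectively of $M$, is applied on the appropriate domain (all relevant operators map into the range of $T$, resp.\ $M$, so left-cancellation is valid). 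The \textbf{only real subtlety}, hence the main obstacle, is item (i) together with making precise in \emph{which} functional setting $T$ is "one-to-one" and on which space the identities are asserted — in the denumerable case this is immediate, but in general one must, as the authors note after Proposition \ref{symmetric}, pass to the Hille–Yosida / $\esL^2$ framework and check that the pushed-down kernels extend boundedly there; modulo that, the proof is a direct diagram chase.
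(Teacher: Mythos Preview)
Your proposal is correct and follows essentially the same diagram-chasing approach as the paper: for item 1) you conjugate the interweaving diagram by $M$ and observe $M\Lambda\wi\Lambda M^{-1}=MP_\wu M^{-1}=P^M_\wu$, and for item 2) you push everything through the injective intertwiner $T$ and cancel on the left, exactly as the paper does (the paper's proof, with slightly inconsistent notation, writes the same chain $T\discret P_\tau=P_\tau T=\Lambda\wi\Lambda T=T\boldsymbol\Lambda\wi{\boldsymbol\Lambda}$). Your extra care about the Bochner-integral interchange and about whether $\boldsymbol\Lambda,\wi{\boldsymbol\Lambda}$ are genuine Markov kernels goes beyond what the paper spells out; the paper simply assumes \eqref{eq:iLi} as given data and omits these technicalities.
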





\par

\subsection{Applications of interweaving relations to the theory of Markov semigroups} \label{sec:appl}
We now turn  to the description
of some interesting features and applications of interweaving relations.
Thorughout this section, we make the hypothesis that $P$ and $\wi P$  admit $\nu$ and $\wi\nu$ as invariant probability measures, respectively,
and  $\cont{P} \stackrel{\tau}{\looparrowleft} \wi{P}   $ . In this case, $\nu\Lambda$ is also an invariant probability measure for $\wi P$,
as shown by multiplying  \eqref{inter} on the left by $\nu$. Similarly, $\wi\nu\wi\Lambda$ is invariant for $P$.
We will assume that $\wi\nu=\nu\Lambda$ and that
$\nu=\wi\nu\wi\Lambda$, when the invariant probability measures are not unique.\par

\subsubsection{Entropy convergence to equilibrium}
We want to deduce estimates on the speed of convergence of $P$ to the equilibrium $\nu$ by taking into account a similar knowledge for $\wi P$ and $\wi \nu$.
First we must specify the way to measure how far a probability measure $m$ on $V$ is from  $\nu$ and here we choose the  entropy (see Subsection \ref{ecte} for extension of the result to general $\varphi$-entropy).
The \textbf{(relative) entropy} of  $m$ with respect to $\nu$  is given by
\bq
\Ent(m\vert\nu)&\df&\lt\{
\begin{array}{ll}\di \int \ln\lt(\frac{dm}{d\nu}\rt)\, dm, &\hbox{if $m\ll\nu$}\\
+\iy,&\hbox{otherwise}\end{array}\rt.
\eq
where $dm/d\nu$ stands for the Radon-Nikodym density of $m$ with respect to $\nu$.
As desired, the quantity $\Ent(m\vert\nu)$ measures the discrepancy between $m$ and $\nu$, in particular we have the Pinsker's bound:
\bq
\Ent(m\vert\nu)
&\geq &2 \lVe m-\nu\rVe_{\mathrm{tv}}^2
\eq
where the \textbf{total variation} distance $ \lVe m-\nu\rVe_{\mathrm{tv}}$ between $m$ and $\nu$ is defined as the supremum  of  $m(A)-\nu(A)$ over $A\in\cV$.
\par
We proceed by assuming  that we have some information about the convergence of $\wi P$ towards $\wi\nu$, under the following form:
there exists a function $\varepsilon\st \RR_+\times \overline{\RR}_+\ri \overline{\RR}_+$, with $\overline{\RR}_+\df\RR_+\sqcup\{+\iy\}$, which is
non-decreasing with respect to the second variable,
such that
\bqn{vareps}
\fo \wi m_0\in\cP(\wi V),\,\fo t\geq 0,\qquad \Ent(\wi m_0\wi P_t\vert\wi\nu)&\leq &\varepsilon(t, \Ent(\wi m_0\vert\wi\nu))\eqn
where $\cP(\wi V)$ is the set of all probability measures on $\wi V$.
For this bound to be meaningful, we furthermore require that
\bq
\fo E\in\RR_+,\qquad \lim_{t\ri +\iy} \varepsilon(t,E)&=&0\eq
A typical instance of \eqref{vareps} is when $\wi P$ satisfies (modified) logarithmic Sobolev inequalities (here and below,  we refer for instance to the book of Ané et al.\ \cite{MR2002g:46132} for  a friendly presentation of these inequalities).
Then there exists a constant $\wi\alpha>0$ such that
\eqref{vareps} holds with the function $\varepsilon$ given by
\bq
\fo t\geq 0,\,\fo E\in\overline{\RR}_+,\qquad
\varepsilon(t,E)&=&\exp(-\wi\alpha t)E\eq
\par\sm

Here is the transfer of the entropic convergence estimate to $P$:
\begin{theo}\label{theo1}
Assume that $\cont{P} \stackrel{\tau}{\looparrowleft} \wi{P}   $  and that
\eqref{vareps} holds. Then we have
\bqn{espE}
\fo  m_0\in\cP(V),\,\fo t\geq 0,\qquad \Ent( m_0P_{t+\wu}\vert\nu)&\leq &\varepsilon(t, \Ent(m_0\vert\nu))\eqn
\end{theo}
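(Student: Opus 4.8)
The plan is to push a generic initial measure $m_0$ on $V$ through the interweaving diagram, land on $\wi V$, apply the hypothesis \eqref{vareps} there, and come back to $V$, controlling the entropy at each step using the contraction property of entropy under Markov kernels. The key structural fact I would use is that relative entropy is non-increasing under any Markov kernel: if $K$ is a Markov kernel from $(V,\cV)$ to $(\wi V,\wi\cV)$ and $m\ll\nu$, then $\Ent(mK\vert \nu K)\leq\Ent(m\vert\nu)$ (the data-processing inequality, proved by Jensen applied to the convex function $x\mapsto x\ln x$). I would also use the two consequences of the standing normalization $\wi\nu=\nu\Lambda$, $\nu=\wi\nu\wi\Lambda$ together with the intertwining identities $P_t\Lambda=\Lambda\wi P_t$ and $\wi P_t\wi\Lambda=\wi\Lambda P_t$.

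First I would write, for $m_0\in\cP(V)$ and $t\geq 0$,
\bq
m_0 P_{t+\wu} \;=\; m_0\,P_t\,\Pwu \;=\; m_0\,P_t\,\Lambda\wi\Lambda \;=\; m_0\,\Lambda\,\wi P_t\,\wi\Lambda,
\eq
using \eqref{warm-up} and then \eqref{inter}. Here $P_{t+\wu}$ is shorthand for $\int_0^\infty P_{t+s}\,\Mwu[s]=P_t\Pwu$, and the relevant quantity $\Ent(m_0P_{t+\wu}\vert\nu)$ is the one in \eqref{espE}. Set $\wi m_0\df m_0\Lambda\in\cP(\wi V)$. Then $m_0P_{t+\wu}=\wi m_0\,\wi P_t\,\wi\Lambda$. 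By the data-processing inequality applied to the kernel $\wi\Lambda$, recalling $\wi\nu\wi\Lambda=\nu$,
\bq
\Ent\big(m_0 P_{t+\wu}\,\big|\,\nu\big)\;=\;\Ent\big((\wi m_0\wi P_t)\wi\Lambda\,\big|\,\wi\nu\wi\Lambda\big)\;\leq\;\Ent\big(\wi m_0\wi P_t\,\big|\,\wi\nu\big).
\eq
Now invoke the hypothesis \eqref{vareps} with this $\wi m_0$: the right-hand side is at most $\varepsilon\big(t,\Ent(\wi m_0\vert\wi\nu)\big)$. Finally, apply data processing once more, now to the kernel $\Lambda$ with $\nu\Lambda=\wi\nu$, to get $\Ent(\wi m_0\vert\wi\nu)=\Ent(m_0\Lambda\vert\nu\Lambda)\leq\Ent(m_0\vert\nu)$; since $\varepsilon(t,\cdot)$ is non-decreasing in its second argument, we conclude $\Ent(m_0P_{t+\wu}\vert\nu)\leq\varepsilon\big(t,\Ent(m_0\vert\nu)\big)$, which is \eqref{espE}.

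The only genuinely delicate points are bookkeeping rather than conceptual. One is making sure the manipulation $m_0P_t\Pwu=m_0P_t\Lambda\wi\Lambda$ is legitimate at the level of measures acting on bounded measurable functions — this is just Fubini/Tonelli together with the measurability of $(t,x)\mapsto P_t(x,A)$ assumed in the paper, exactly as in the proof of Proposition \ref{symmetric}. The other is the case $m_0\not\ll\nu$, where the left side of \eqref{espE} should be read with the convention $\Ent(\cdot\vert\nu)=+\iy$; then the bound is trivial because the right side is $\varepsilon(t,+\iy)=+\iy$ (using that $\varepsilon$ takes values in $\overline\RR_+$ and is non-decreasing), so there is nothing to prove. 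I would remark in passing that when $\wu$ is degenerate the statement specializes to $\Ent(m_0P_{t+t_0}\vert\nu)\leq\varepsilon(t,\Ent(m_0\vert\nu))$, i.e. the convergence estimate for $P$ holds after the fixed warm-up delay $t_0$ — which is the cut-off-type reading emphasized in the introduction.
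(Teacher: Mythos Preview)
Your proof is correct and follows essentially the same route as the paper's: write $m_0P_{t+\wu}=m_0\Lambda\,\wi P_t\,\wi\Lambda$ via the interweaving relation, then apply the data-processing inequality for relative entropy once with $\wi\Lambda$ (using $\wi\nu\wi\Lambda=\nu$), invoke \eqref{vareps}, and apply data processing again with $\Lambda$ (using $\nu\Lambda=\wi\nu$) together with the monotonicity of $\varepsilon$ in its second argument. The paper presents this argument in the more general $\varphi$-entropy setting (Theorem \ref{theo10}), of which Theorem \ref{theo1} is the special case $\varphi(x)=x\ln x-x+1$.
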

\par
From a probabilistic point of view (see Remark \ref{1errems}(a) or the definition of a measurable Markov process below),  $m_0P_{t+\wu}$ is the distribution of
$X_{t+\wu}$, where $\tau$ is a random time independent of $X$ and distributed according to $\wu$.
The bound \eqref{espE} says that up to waiting a random warm-up time $\wu$, we get for $P$ the same estimate on the speed of convergence to equilibrium
as for $\wi P$.

\subsubsection{Hypercontractivity}
Another famous classical application of logarithmic Sobolev inequalities concerns hypercontractivity, which is a kind of regularizing property.
Interweaving relations   equally enable its transfer from a semigroup to another one, up to a random warm-up time.
More precisely, the \textbf{hypercontractivity} property of the semigroup $\wi P$, is the existence of a constant $\wi\alpha>0$ (which may be different from the one considered above,
for Markov processes which are not diffusions), such that
we have for the operator norms,
\bqn{hypercon}
\fo t\geq 0,\qquad
\vvv \wi P_t\vvv_{\esL^2(\wi\nu)\ri\esL^{p(\wi\alpha t)}(\wi\nu)}&\leq &1\eqn
where
\bq
\fo t\geq 0,\qquad p(\wi\alpha t)&\df&1+\exp(\wi\alpha t) \eq
\par
Here is the analogue of Theorem \ref{theo1} for hypercontractivity:
\begin{theo}\label{theo2}
Assume that $\cont{P} \stackrel{\wu}{\looparrowleft} \wi{P}   $   and that
\eqref{hypercon} holds. Then we have
\bqn{eqtheo2}
\fo t\geq 0,\qquad  \vvv P_{t+\wu}\vvv_{\esL^2(\nu)\ri\esL^{p(\wi\alpha t)}(\nu)}&\leq & 1\eqn
\end{theo}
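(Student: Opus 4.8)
The plan is to mimic the proof of Theorem \ref{theo1}, replacing the entropy functional by the $\esL^2\to\esL^p$ operator norm and using the factorization \eqref{warm-up} together with the two intertwinings in \eqref{eq:cmir}. First I would record the key algebraic identity: combining $\Gate{P}{\Lambda}{\wi P}$, $\Gate{\wi P}{\wi\Lambda}{P}$ and $\Lambda\wi\Lambda=\Pwu=\int_0^\infty P_s\,\Mwu$, one gets, for every $t\ge 0$,
\begin{equation*}
P_{t+\wu}:=\int_0^\infty P_{t+s}\,\Mwu=P_t\,\Pwu=P_t\,\Lambda\wi\Lambda=\Lambda\,\wi P_t\,\wi\Lambda,
\end{equation*}
the middle equalities using the semigroup property and pushing $P_t$ through $\Lambda$. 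So $P_{t+\wu}$ factors through $\wi P_t$ via the Markov kernels $\Lambda$ and $\wi\Lambda$, which is exactly the structure needed.

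Next I would invoke the standard fact that a Markov operator is a contraction between any $\esL^q$ spaces built on compatible invariant measures. Precisely: since $\Lambda$ is Markov with $\nu\Lambda=\wi\nu$ (our standing hypothesis in Section \ref{sec:appl}), Jensen's inequality gives $\vvv\Lambda\vvv_{\esL^q(\wi\nu)\to\esL^q(\nu)}\le 1$ for every $q\in[1,\infty]$; symmetrically, since $\wi\nu\wi\Lambda=\nu$, we have $\vvv\wi\Lambda\vvv_{\esL^q(\nu)\to\esL^q(\wi\nu)}\le 1$ for every $q$. (These are the same contraction estimates already used implicitly in the entropy argument.) Now fix $t\ge 0$ and set $p=p(\wi\alpha t)=1+\exp(\wi\alpha t)\ge 2$. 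Factor the operator norm along the chain $\esL^2(\nu)\xrightarrow{\wi\Lambda}\esL^2(\wi\nu)\xrightarrow{\wi P_t}\esL^{p}(\wi\nu)\xrightarrow{\Lambda}\esL^{p}(\nu)$:
\begin{equation*}
\vvv P_{t+\wu}\vvv_{\esL^2(\nu)\to\esL^{p}(\nu)}
=\vvv\Lambda\,\wi P_t\,\wi\Lambda\vvv_{\esL^2(\nu)\to\esL^{p}(\nu)}
\le\vvv\Lambda\vvv_{\esL^{p}(\wi\nu)\to\esL^{p}(\nu)}\;\vvv\wi P_t\vvv_{\esL^2(\wi\nu)\to\esL^{p}(\wi\nu)}\;\vvv\wi\Lambda\vvv_{\esL^2(\nu)\to\esL^2(\wi\nu)}
\le 1\cdot 1\cdot 1=1,
\end{equation*}
where the middle factor is bounded by $1$ precisely by the hypercontractivity hypothesis \eqref{hypercon}, since $p=p(\wi\alpha t)$. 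This is the whole argument.

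The only genuine subtlety — the point I would expect to have to address carefully — is the measure-theoretic bookkeeping: one must justify that $\int_0^\infty P_{t+s}f\,\Mwu$ defines a bounded operator on the relevant $\esL^q(\nu)$ spaces and that the manipulation $\int_0^\infty P_{t+s}\,\Mwu=\Lambda\wi P_t\wi\Lambda$ is legitimate at the level of operators (not merely of kernels acting on bounded Borel functions), i.e. that $\Pwu$ as defined in \eqref{warm-up} coincides, as an operator, with $P_t\Lambda\wi\Lambda$ on a dense subspace and hence everywhere by continuity. This is exactly the Hille--Yosida/Banach-space caveat flagged in Remark \ref{1errems}(b) and in the discussion after Proposition \ref{symmetric}; modulo it (and the standing assumption $\wi\nu=\nu\Lambda$, $\nu=\wi\nu\wi\Lambda$ made in Section \ref{sec:appl}), the proof is the short three-arrow factorization above, and no analogue of the delicate entropy estimates is needed here.
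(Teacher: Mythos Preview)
Your proof is correct and follows essentially the same route as the paper. The paper in fact proves a slightly more general statement (Theorem~\ref{Orliczcmir}) with an arbitrary Young function $\varphi$ in place of $x\mapsto |x|^{p(\wi\alpha t)}$, using the same factorization $P_{T+\wu}=\Lambda\,\wi P_T\,\wi\Lambda$ and the same three-factor norm bound together with Jensen's inequality (packaged as Lemma~\ref{OrliczLambda}); Theorem~\ref{theo2} is then obtained by specializing to the $\esL^p$ case exactly as you do.
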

\par\me

\subsubsection{Cut-off phenomenon} \label{sec:cutoff}
Coming back to the convergence to equilibrium, we now explain  how a
symmetric interweaving relation   enables the transfer of the cut-off phenomenon (for a short survey of this notion, see Diaconis \cite{MR1374011}).
To state our result, we need a family $(P^{(n)})_{n\in\ZZ_+}$ of Markov semigroups on state spaces $(V^{(n)})_{n\in\ZZ_+}$ with respective invariant probability measures $(\nu^{(n)})_{n\in\ZZ_+}$. Defining, for any $n\in\ZZ_+$,
\bqn{frdn}
\fo t\in\RR_+,\qquad  \frd^{(n)}(t)&\df& \sup_{m_0\in \cP(V^{(n)})}\lVe m_0P^{(n)}_{  t}-\nu^{(n)} \rVe_{\mathrm{tv}}\eqn
we say  that the family $(P^{(n)})_{n\in\ZZ_+}$ has
\begin{enumerate}[(1)]
  \item \label{def:cut1}a (uniform) \textbf{cut-off}  at the positive \textbf{cut-off times}  $(t^{(n)})_{n\in\ZZ_+}$ when
  for any $r\in (0,1)\sqcup(1,+\iy)$,
\bq\lim_{n\ri\iy} \frd^{(n)}(rt^{(n)})&=&\un_{\{0<r<1\}}\eq
  \item a  \textbf{window cut-off} (resp.~\textbf{profile cut-off})  at   $(t^{(n)},w^{(n)})_{n\in\ZZ_+}$ (resp.~and with profile $\eta$) if  $t^{(n)} \rightarrow \infty$, $w^{(n)}=o(t^{(n)})$ as $n\to \infty$, and
\bq \lim_{c\to -\infty}\liminf_{n\ri\iy} \frd^{(n)}(t^{(n)}+c w^{(n)})=1 \textrm{ and } \lim_{c\to +\infty} \limsup_{n\ri\iy}\frd^{(n)}(t^{(n)}+c w^{(n)})=0
\eq
(resp.~and  for all $c\in \R$, ${\eta}(c)=\liminf_{n\ri\iy} \frd^{(n)}(t^{(n)}+c w^{(n)})=\limsup_{n\ri\iy} \frd^{(n)}(t^{(n)}+c w^{(n)})$). 

\end{enumerate}

With these definitions we have the following result.
\begin{theo}\label{theo3}
Consider two families of  Markov semigroups $(P^{(n)})_{n\in\ZZ_+}$ and $(\wi P^{(n)})_{n\in\ZZ_+}$  on  $(V^{(n)})_{n\in\ZZ_+}$ and $(\wi V^{(n)})_{n\in\ZZ_+}$ and with invariant probability distributions
$(\nu^{(n)})_{n\in\ZZ_+}$ and $(\wi\nu^{(n)})_{n\in\ZZ_+}$, respectively.
Let  $(t^{(n)})_{n\in\ZZ_+}$ be a sequence of  positive real numbers and assume that for any $n\in\ZZ_+$, $\cont{P} \stackrel{\boldsymbol{t}^{(n)}_0}{\leftrightsquigarrow} \wi{P} $
 such that
\bqn{neglig}
\lim_{n\ri\iy} \frac{\boldsymbol{t}^{(n)}_0}{t^{(n)}}&=&0 \quad (\textrm{resp.~}
\limsup_{n\ri\iy} \frac{\boldsymbol{t}^{(n)}_0}{w^{(n)}}=0)
\eqn
Then the cut-off  (resp.~\textbf{window cut-off} and \textbf{profile cut-off}) phenomenon  with cut-off times $(t^{(n)})_{n\in\ZZ_+}$ (resp.~windows $(w^{(n)})_{n\in\ZZ_+}$ and  profile $\eta$) for $(P^{(n)})_{n\in\ZZ_+}$
is equivalent to that of $(\wi P^{(n)})_{n\in\ZZ_+}$.
\end{theo}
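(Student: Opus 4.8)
\emph{Proof plan.} I would reduce the whole statement to a single two‑sided comparison between the total‑variation distances to equilibrium of the two families, and then read off each of the three cut‑off notions from it. Write $\wi\frd^{(n)}$ for the quantity defined as in \eqref{frdn} but attached to $\wi P^{(n)}$ and $\wi\nu^{(n)}$. Two elementary facts are used throughout: (i) each $\frd^{(n)}$ and each $\wi\frd^{(n)}$ is non‑increasing, since $m_0P_{t+s}=(m_0P_t)P_s$, the equilibrium measure is invariant, and a Markov kernel never increases the total‑variation distance (the data‑processing inequality); (ii) for the same reason, $\lVe\mu K\rVe_{\mathrm{tv}}\leq\lVe\mu\rVe_{\mathrm{tv}}$ for every Markov kernel $K$ and every signed measure $\mu$ of total mass zero.

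\emph{Step 1: the comparison.} Fix $n$ and drop the superscript. Let $\Lambda$, $\wi\Lambda$ and the constant $\boldsymbol t_0$ realize $\cont{P}\stackrel{\boldsymbol t_0}{\leftrightsquigarrow}\wi P$, so that $P_t\Lambda=\Lambda\wi P_t$, $\Lambda\wi\Lambda=P_{\boldsymbol t_0}$, $\wi\Lambda\Lambda=\wi P_{\boldsymbol t_0}$, and (by the conventions fixed at the start of Section~\ref{sec:appl}) $\wi\nu=\nu\Lambda$ and $\nu=\wi\nu\wi\Lambda$. For $m_0\in\cP(V)$ and $t\geq0$ the identity $m_0P_{t+\boldsymbol t_0}-\nu=(m_0P_t\Lambda-\wi\nu)\wi\Lambda=\big((m_0\Lambda)\wi P_t-\wi\nu\big)\wi\Lambda$ together with (ii) yields $\lVe m_0P_{t+\boldsymbol t_0}-\nu\rVe_{\mathrm{tv}}\leq\lVe (m_0\Lambda)\wi P_t-\wi\nu\rVe_{\mathrm{tv}}\leq\wi\frd(t)$, since $m_0\Lambda\in\cP(\wi V)$. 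Taking the supremum over $m_0$ gives $\frd(t+\boldsymbol t_0)\leq\wi\frd(t)$; exchanging the roles of $P$ and $\wi P$ (the symmetric interweaving relation is symmetric in the two semigroups) gives $\wi\frd(t+\boldsymbol t_0)\leq\frd(t)$. Combining the two with the monotonicity (i), and restoring the superscript, I obtain for all $t\geq\boldsymbol t^{(n)}_0$
\begin{equation}\label{eq:cutoff-sandwich}
\wi\frd^{(n)}\!\big(t+\boldsymbol t^{(n)}_0\big)\ \leq\ \frd^{(n)}(t)\ \leq\ \wi\frd^{(n)}\!\big(t-\boldsymbol t^{(n)}_0\big).
\end{equation}

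\emph{Step 2: reading off the cut‑offs.} Because $\boldsymbol t^{(n)}_0/t^{(n)}\to0$ (resp.\ $\boldsymbol t^{(n)}_0/w^{(n)}\to0$ by \eqref{neglig}), the times at the two ends of \eqref{eq:cutoff-sandwich} are, on the relevant scale, asymptotically the target time. For the uniform cut‑off: given $r\in(0,1)$ pick $r'\in(r,1)$; for $n$ large $rt^{(n)}\geq\boldsymbol t^{(n)}_0$ and $rt^{(n)}+\boldsymbol t^{(n)}_0\leq r't^{(n)}$, so \eqref{eq:cutoff-sandwich} and monotonicity give $\frd^{(n)}(rt^{(n)})\geq\wi\frd^{(n)}(r't^{(n)})\to1$; for $r>1$ pick $r'\in(1,r)$ and get $\frd^{(n)}(rt^{(n)})\leq\wi\frd^{(n)}(r't^{(n)})\to0$. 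For the window/profile cut‑off, set $\varepsilon_n=\boldsymbol t^{(n)}_0/w^{(n)}\to0^+$, so that $t^{(n)}+cw^{(n)}\pm\boldsymbol t^{(n)}_0=t^{(n)}+(c\pm\varepsilon_n)w^{(n)}$; then for every fixed $\delta>0$ and $n$ large, \eqref{eq:cutoff-sandwich} and monotonicity sharpen to $\wi\frd^{(n)}(t^{(n)}+(c+\delta)w^{(n)})\leq\frd^{(n)}(t^{(n)}+cw^{(n)})\leq\wi\frd^{(n)}(t^{(n)}+(c-\delta)w^{(n)})$. Passing to $\liminf_n$, $\limsup_n$ and then letting $\delta\downarrow0$ transfers the two window limits as $c\to\mp\infty$ and shows that at every continuity point of the (non‑increasing) profile $\eta$ one has $\lim_n\frd^{(n)}(t^{(n)}+cw^{(n)})=\eta(c)$. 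Since \eqref{eq:cutoff-sandwich} is symmetric in $P^{(n)}$ and $\wi P^{(n)}$, each implication runs in reverse as well, which gives the claimed equivalences.

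\emph{Main obstacle.} The real content is Step 1: recognizing that a \emph{symmetric} interweaving relation with a deterministic delay $\boldsymbol t_0$ collapses, through the data‑processing inequality and the two intertwining identities, into the clean sandwich \eqref{eq:cutoff-sandwich} that squeezes $\frd^{(n)}$ between two shifts of $\wi\frd^{(n)}$ by $\pm\boldsymbol t^{(n)}_0$; once this is in hand, everything reduces to a routine $\varepsilon$‑chase using \eqref{neglig} and monotonicity. The only genuinely delicate point concerns the profile cut‑off: since $\eta$, being monotone, may possess countably many jumps, at such a point \eqref{eq:cutoff-sandwich} merely bounds $\liminf_n$ and $\limsup_n$ of $\frd^{(n)}(t^{(n)}+cw^{(n)})$ between $\eta(c^+)$ and $\eta(c^-)$, so the profile is transferred cleanly exactly at the continuity points of $\eta$ (in particular whenever the profile is continuous, which is the case in the applications).
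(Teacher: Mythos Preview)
Your proof is correct and follows essentially the same route as the paper's: both arguments derive, from the symmetric interweaving relation and the contraction of total variation under Markov kernels, the two one–sided inequalities $\frd^{(n)}(t+\boldsymbol t_0^{(n)})\leq\wi\frd^{(n)}(t)$ and $\wi\frd^{(n)}(t+\boldsymbol t_0^{(n)})\leq\frd^{(n)}(t)$, and then combine these with monotonicity and \eqref{neglig} to transfer the cut‑off. The only cosmetic difference is that the paper obtains the contraction step by specializing the $\varphi$-entropy inequality \eqref{Xi} (Theorem~\ref{theo10}) to the Young function $x\mapsto|x-1|$, whereas you invoke the data‑processing inequality for total variation directly; your observation about the profile cut‑off holding only at continuity points of $\eta$ is a valid refinement that the paper leaves implicit.
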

\par
The remaining part of the paper is organized  as follows. In the two forthcoming  sections we describe several  examples of interweaving relations along with their applications.  More specifically, in the next section, we focus on interweaving relations  where the warm-up distribution is a Dirac mass: this includes  the two points space and the intertwining relations between continuous and discrete Bessel and Laguerre processes and some degenerate hypoelliptic Ornstein-Uhlenbeck processes.
In Section 3, we consider  interweaving relations   between diffusive Laguerre processes of different parameters, as well as some semigroups associated to Markov processes with jumps. Finally we prove extensions of the statements presented in this introduction in Section~4.

\section{Deterministic warm-up time examples}\label{dwte}

Three examples of  \cmirs\   whose warm-up times are deterministic are presented in the following subsections: there exists $t_0\geq 0$ such that
$\tau=\delta_{t_0}$. In this situation the statements of Theorems \ref{theo1} and  \ref{theo2} simplify, as \eqref{espE} and \eqref{eqtheo2}
are respectively replaced by
\bq
\fo  m_0\in\cP(V),\,\fo t\geq 0,\qquad \Ent( m_0P_{t_0+t}\vert\mu)&\leq &\varepsilon(t, \Ent(m_0\vert\mu))\eq
and
\bq
\fo t\geq 0,\qquad  \vvv P^{(\beta)}_{t_0+t}\vvv_{\esL^2(\mu)\ri\esL^{p(\wi\alpha t)}(\mu)}&\leq & 1\eq
\par

\subsection{The two point space}\label{2points}

Consider the simplest non-trivial case of the setting of the introduction, where $V=\wi V$ is the  two point space $\{0,1\}$.
Let $L$ and $\wi L$ be two isospectral irreducible Markov generators on $V$.
We can write
\bq
L&=&\lambda(\mu-\Id)\eq
 where $\lambda>0$ is the non-zero eigenvalue of $-L$, $\mu$ is the invariant probability of $L$, seen as a Markov kernel, and $\Id$
is the identity operator. Any non-zero function $\varphi$ on $V$ such that $\mu[\varphi]=0$ is an eigenfunction of $L$ associated to the eigenvalue $-\lambda$.
Consider the function $\varphi$ normalized in $\esL^2(\mu)$ given by
\bqn{varphi}
\varphi&\df& \lt(\begin{array}{c}\varphi(0)\\
\varphi(1)\end{array}\rt) \ \df\  \lt(\begin{array}{c} l\\
-1/l\end{array}\rt)\qquad \hbox{with}\ l\ \df\ \sqrt{\frac{\mu(1)}{\mu(0)}}\eqn
Since $\wi L$ is irreducible and isospectral with $L$, it can be written $\lambda(\wi\mu-\Id)$, where $\wi\mu$ is the invariant probability of $\wi L$. Define  $\wi\varphi$ as in \eqref{varphi}, with $\mu$  replaced by $\wi\mu$.\par
 For $\epsilon>0$,
define $ \Lambda_{\epsilon}$ the linear mapping sending $\wi\varphi$ to $\epsilon\varphi$ and preserving the function $\un$.
It is immediate to check that $\Gate{L}{\Lambda_{\epsilon}}{\wi L}$.
A priori $\Lambda_{\epsilon}$ is not a Markov kernel. Nevertheless its matrix in the basis $(\un_{\{0\}}, \un_{\{1\}})$ is of the form
$\lt(\begin{array}{cc} a&1-a\\
b&1-b\end{array}\rt)$ and we compute that
\bq
a&=&\frac{1+\epsilon l\wi l}{1+\wi l^2}\\
b&=&\frac{1-\epsilon \wi l/l}{1+\wi l^2}\eq
It follows that for $\epsilon>0$, $\Lambda_{\epsilon}$ is Markovian if and only if
\bqn{largesteps}
\epsilon&\leq & \min(l/\wi l\, ,\, \wi l/l)\eqn
 Choose  $\epsilon_0\df  \min(l/\wi l\, ,\, \wi l/l)$, the largest value such that $\Lambda_{\epsilon_0}$ is Markovian.
Symmetrically, for $\wi\epsilon>0$, construct $\wi\Lambda_{\wi\epsilon}$ sending $\varphi$ to $\wi\epsilon\wi\varphi$ and preserving $\un$.
We have $\Gate{\wi L}{\wi\Lambda_{\wi\epsilon}}{ L}$ and by symmetry of the r.h.s.\ of \eqref{largesteps}, $\wi \Lambda_{\epsilon}$ is Markovian for $\wi\epsilon\in(0,\epsilon_0]$.
Again choose $\wi\epsilon=\epsilon_0$, the largest value such that $\wi\Lambda_{\wi\epsilon}$ is Markovian.
The mapping $\wi\Lambda_{\epsilon_0}\Lambda_{\epsilon_0}$
 is uniquely determined by the fact that it preserves $\un$ and that $(\wi\Lambda_{\epsilon_0}\Lambda_{\epsilon_0})\varphi=\epsilon_0^2\varphi$.
This observation leads us to consider $t_0\df t_0(L,\wi L)\df -\ln(\epsilon_0^2)\geq 0$, so that $\wi\Lambda_{\epsilon_0}\Lambda_{\epsilon_0}=\exp(t_0 L)$. We are thus in the framework considered in the introduction.
Similarly, we  get $\Lambda_{\epsilon_0}\wi\Lambda_{\epsilon_0}=\exp(t_0\wi L)$, and this can also be deduced from Proposition \ref{symmetric}, since $\Lambda_{\epsilon_0}$ is invertible. It seems that $t_0$ is
the smallest warm-up deterministic time enabling to go from estimates of convergence
for one of the semigroup to the other one. As in the introduction, let us consider more specifically the traditional case of relative entropy.
Diaconis and Saloff-Coste \cite{MR1410112} computed the logarithmic Sobolev constant $\alpha(L)$ of $L$:
\bq
\alpha(L)&=&4\frac{1-2\mu_\wedge}{\ln(1/\mu_\wedge-1)}\lambda\eq
with $\mu_\wedge\df \mu(0)\wedge\mu(1)$, the smallest value taken by the invariant measure.
\par
We have for any initial distribution $m_0$ on $\{0,1\}$,
\bqn{DiacSal}
\fo t\geq 0,\qquad \Ent(m_0\exp(t L)\vert\mu)&\leq & \exp(- \alpha(L)t)\Ent(m_0\vert\mu)\eqn
\par
Taking into account Theorem \ref{theo1}, this bound can be improved into
\bq
\fo t\geq 0,\qquad \Ent(m_0\exp(t L)\vert\mu)&\leq & \min\{ \exp(-\alpha(\wi L)(t-t_0(L,\wi L))_+)\st \wi L\in\fL(L)\}\Ent(m_0\vert\mu)\eq
where $\fL(L)$ is the set of irreducible Markov generators isospectral to $L$.
Note that $\alpha(\wi L)$  is   strictly decreasing as a function of $\wi\mu_\wedge$ and thus
 the logarithmic Sobolev constants of $L$ and $\wi L$ are distinct when $L\not=\wi L$ (up to the symmetry exchanging 0 and 1).
Furthermore, the bound $\alpha(\wi L)\leq 2\lambda$ is only attained  when $\wi\mu$ is the uniform distribution on $\{0,1\}$ (in this case the computation of the logarithmic Sobolev inequality is due
to Gross \cite{MR0420249}). So it is appealing to try a comparison with this ``fastest case'' where $\wi\mu= (1/2, 1/2)$, and we get
\bqn{improved2pt}
\fo t\geq 0,\qquad \Ent(m_0\exp(t L)\vert\mu)&\leq & \exp(- 2\lambda (t-\ln(1/\mu_\wedge-1))_+)\Ent(m_0\vert\mu)\eqn
since
\bq
\epsilon_0\ =\ \min\lt( \sqrt{\frac{\mu(1)}{\mu(0)}},  \sqrt{\frac{\mu(0)}{\mu(1)}}\rt)\ =\ \sqrt{\frac{\mu_\wedge}{1-\mu_\wedge}}\eq
so that $t_0=\ln(1/\mu_\wedge-1)$.\par
Formula \eqref{improved2pt} becomes rapidly better than \eqref{DiacSal}.
It follows that, for ``medium'' times,  to get good estimates of the relative entropy with respect to $\mu$ of the time marginal laws of the Markov evolution generated by $L$, it is more interesting
to intertwine this evolution with the isospectral generator $\wi L$ corresponding to  the uniform distribution than to compute the logarithmic Sobolev constant associated to $L$.
\par
The existence of Markovian kernels $\Lambda$ and $\wi \Lambda$ intertwining two irreducible  isospectral (in the extended sense: equality of eigenvalues and dimensions of the Jordan blocks) and finite Markov generators was shown in \cite{miclo:hal-01281029}.
We believe these kernels can furthermore be chosen so that a  \cmir\   holds, as a subcase of Conjecture \ref{con1}.

\subsection{Classical and discrete squared Bessel processes}

The examples  described in this subsection and in the following one were the first instances of  \cmirs\   that we identified  in
 \cite{self1}. However, this notion was not properly isolated and investigated there. 

 \noindent For a given $\beta>0$, consider the classical squared Bessel diffusion generator $G_\beta$ of index $\beta-1$ (dimension $2\beta$) on $\RR_+$ given by
\bq
 \fo x\in(0,+\iy),\qquad G_{\beta}&\df&x\pa^2+\beta\pa\eq
 where $\pa$ is the usual differentiation operator.
  This diffusion generator admits $\mu_\beta$ as invariant (even reversible) measure, where
\bq
\fo x\in(0,+\iy),\qquad \mu_{\beta}(dx)&\df &\frac{x^{\beta-1}}{\Gamma(\beta)}\,dx\eq
where $\Gamma$ is the usual gamma function. For $\beta>0$, denote $Q^{(\beta)}\df(Q^{(\beta)}_t)_{t\geq 0}$ the Markov semigroup generated by $G_\beta$.
 \par\sm
An analogue discrete  squared Bessel birth-and-death generator $ \discret{G}_{\beta}$ is defined by
\bq  \fo n\in \ZZ_+, \qquad\discret{G}_{\beta}&\df&(n+\beta) \pa_+ + n\pa_-\eq
where the operators $\pa_{\pm}$ act on any function $\mathbfit{f}\st \ZZ_+\ri\RR$ via
\bq\fo n\in\ZZ_+,\qquad
\pa_{\pm}\mathbfit{f}(n)\df& \mathbfit{f}(n\pm 1)-\mathbfit{f}(n)\eq
(with the convention that $\mathbfit{f}(-1)\df \mathbfit{f}(0)$).
 The birth-and-death generator $\discret{G}_{\beta}$ admits ${\mathbfit{u}}_\beta$ as invariant (even reversible) measure, where
\bq
\fo n\in\ZZ_+,\qquad {\mathbfit{u}}_{\beta}(n)&\df &\frac{(n+\beta-1)(n+\beta-2)\cdots \beta}{n!}.\eq
For $\beta,\,\sigma>0$, denote $\discret{Q}^{(\beta,\sigma)}\df(\discret{Q}^{(\beta,\sigma)}_t)_{t\geq 0}$ the Markov semigroup generated by $\sigma\discret{G}_{\beta}$.
For $\sigma>0$, consider $\Lambda_\sigma$ the  Markov kernel from $\RR_+$ to $\ZZ_+$ given by the  Poisson transition probability measures:
 \bq
 \fo x\in\RR_+,\,\fo n\in\ZZ_+,\qquad \Lambda_\sigma(x,n)&\df& \frac{(\sigma x)^n}{n!}\exp(-\sigma x)\eq
 \par
 Conversely, for $\beta,\sigma>0$, consider $\wi \Lambda_{\beta,\sigma}$ the  Markov kernel from $\ZZ_+$ to $\RR_+$ given by the  gamma transition probability measures:
\bq
\fo n\in\ZZ_+,\,\fo x\in(0,\iy ),\qquad \wi{\mathbfit{\Lambda}}_{\beta,\sigma}(n,dx)&=& \sigma^{n+\beta}\frac{x^{n+\beta-1}}{\Gamma(n+\beta)}\exp(-\sigma x)\, dx\eq
\par
In  \cite{self1}, we have shown the following symmetric  \cmir\   with deterministic warm-up time $\sigma>0$:
\begin{pro}\label{pro3}
For any $\beta,\sigma>0$, we have
\begin{equation*}
   Q^{(\beta)} \stackrel{\sigma}{ \leftrightsquigarrow } \discret{Q}^{(\beta,\sigma)}
\end{equation*}
where $\Lambda=\Lambda_\sigma$ and $ \wi \Lambda = \wi{\mathbfit{\Lambda}}_{\beta,\sigma}$.
\end{pro}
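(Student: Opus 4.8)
The plan is to verify, one by one, the four ingredients of a symmetric interweaving relation with warm-up $\delta_\sigma$: the two commutations $\Gate{Q^{(\beta)}}{\Lambda_\sigma}{\discret{Q}^{(\beta,\sigma)}}$ and $\Gate{\discret{Q}^{(\beta,\sigma)}}{\wi{\mathbfit{\Lambda}}_{\beta,\sigma}}{Q^{(\beta)}}$, the warm-up identity $\Lambda_\sigma\wi{\mathbfit{\Lambda}}_{\beta,\sigma}=Q^{(\beta)}_\sigma$, and the dual identity $\wi{\mathbfit{\Lambda}}_{\beta,\sigma}\Lambda_\sigma=\discret{Q}^{(\beta,\sigma)}_\sigma$. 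The last one will come for free: the Poisson kernel $\Lambda_\sigma$ is one-to-one on $\esB(\ZZ_+)$, since for bounded $\mathbfit{f}$ the identity $\Lambda_\sigma\mathbfit{f}\equiv 0$ forces the entire series $u\mapsto\sum_{n}u^{n}\mathbfit{f}(n)/n!$ to vanish identically, hence $\mathbfit{f}\equiv 0$; so once the two commutations and $\Lambda_\sigma\wi{\mathbfit{\Lambda}}_{\beta,\sigma}=Q^{(\beta)}_\sigma$ are established, Proposition~\ref{symmetric} delivers $\wi{\mathbfit{\Lambda}}_{\beta,\sigma}\Lambda_\sigma=\discret{Q}^{(\beta,\sigma)}_\sigma$.

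For the commutations I would work on two families of functions on which both semigroups act by an explicit one-parameter flow and which $\Lambda_\sigma$ and $\wi{\mathbfit{\Lambda}}_{\beta,\sigma}$ carry onto one another: the exponentials $e_\lambda(x)=e^{-\lambda x}$ on $\RR_+$ ($\lambda>0$) and the geometric functions $h_z(n)=z^{n}$ on $\ZZ_+$ ($z\in(0,1)$). One has $\Lambda_\sigma h_z=e_{\sigma(1-z)}$ and, by a Gamma integral, $\wi{\mathbfit{\Lambda}}_{\beta,\sigma}e_\lambda=(\sigma/(\sigma+\lambda))^{\beta}h_{\sigma/(\sigma+\lambda)}$. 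On the diffusion side the squared Bessel semigroup satisfies $Q^{(\beta)}_t e_\lambda=(1+\lambda t)^{-\beta}e_{\lambda/(1+\lambda t)}$, the classical Laplace transform of the squared Bessel kernel; on the birth-and-death side a short computation with the probability generating function of the linear birth-and-death process with immigration generated by $\sigma\discret{G}_\beta$ --- solving the attendant Riccati ODE for the branching part and integrating for the immigration part --- gives $\discret{Q}^{(\beta,\sigma)}_t h_z=(1+\sigma t(1-z))^{-\beta}h_{w(t,z)}$ with $w(t,z)=(z+\sigma t(1-z))/(1+\sigma t(1-z))$. Substituting, both $Q^{(\beta)}_t\Lambda_\sigma h_z$ and $\Lambda_\sigma\discret{Q}^{(\beta,\sigma)}_t h_z$ reduce to the same explicit exponential with the same scalar prefactor; since $\{h_z:z\in(0,1)\}$ is measure-determining on $\ZZ_+$, this is exactly $\Gate{Q^{(\beta)}}{\Lambda_\sigma}{\discret{Q}^{(\beta,\sigma)}}$, with no separate semigroup-lifting step needed. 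The second commutation is obtained the same way, testing both sides against $e_\lambda$ and using that $\{e_\lambda:\lambda>0\}$ is measure-determining on $\RR_+$. (Should one prefer the infinitesimal route, one checks $G_\beta\Lambda_\sigma=\sigma\,\Lambda_\sigma\discret{G}_\beta$ and $\sigma\,\discret{G}_\beta\wi{\mathbfit{\Lambda}}_{\beta,\sigma}=\wi{\mathbfit{\Lambda}}_{\beta,\sigma}G_\beta$ by elementary manipulations --- writing $\Lambda_\sigma\mathbfit{f}(x)=e^{-\sigma x}\psi(\sigma x)$ with $\psi(u)=\sum_n u^{n}\mathbfit{f}(n)/n!$, both sides of the first identity become $\sigma e^{-u}(u\psi''+(\beta-2u)\psi'+(u-\beta)\psi)$ at $u=\sigma x$ --- and then lifts them to the semigroups by the usual Cauchy-problem uniqueness argument on the same function classes.)

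The warm-up identity $\Lambda_\sigma\wi{\mathbfit{\Lambda}}_{\beta,\sigma}=Q^{(\beta)}_\sigma$ I would then read off by testing against $e_\lambda$: by the formulas above $\Lambda_\sigma\wi{\mathbfit{\Lambda}}_{\beta,\sigma}e_\lambda$ is a scalar multiple of an exponential $e_{\lambda'}$, and matching both the scalar and $\lambda'$ with $Q^{(\beta)}_\sigma e_\lambda$ --- equivalently, comparing the Laplace transforms of the two kernels on $\RR_+$, obtained by summing a Poisson series over Gamma Laplace transforms on one side and from the explicit squared Bessel kernel on the other --- gives the equality of the two kernels. With $\Lambda_\sigma$ one-to-one, Proposition~\ref{symmetric} then yields $\wi{\mathbfit{\Lambda}}_{\beta,\sigma}\Lambda_\sigma=\discret{Q}^{(\beta,\sigma)}_\sigma$, completing the symmetric interweaving relation.

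The calculations are all elementary, and the only points requiring some care are classical inputs rather than genuine obstacles: the explicit action of $\discret{Q}^{(\beta,\sigma)}$ on the geometric functions (via the branching-with-immigration structure of $\discret{G}_\beta$) and the explicit squared Bessel transition kernel (or its Laplace transform). Had one instead gone through the generators, the delicate step would be the lifting of the infinitesimal intertwinings to the semigroup level: $G_\beta$ lives on the half-line with a boundary at $0$ and its reversible measure $\mu_\beta$ is infinite, so the self-adjoint Hille-Yosida framework of Remark~\ref{1errems}(a) does not apply directly; staying on the semigroup-invariant families $(e_\lambda)$ and $(h_z)$ throughout avoids this altogether.
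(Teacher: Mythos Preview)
The paper does not give a self-contained proof here but refers to \cite{self1}, where the result is established by checking the generator-level intertwinings and computing the composed kernel explicitly; your generating-function approach --- testing the relations on the families $(e_\lambda)_{\lambda>0}$ and $(h_z)_{z\in(0,1)}$, which are closed under all four kernels involved and are measure-determining --- is exactly that strategy, and your appeal to the injectivity of $\Lambda_\sigma$ together with Proposition~\ref{symmetric} for the symmetric part is precisely what the paper itself invokes in the parallel Laguerre case right after Proposition~\ref{pro4}. So the proposal is correct and aligned with the paper's approach.

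One remark worth recording: if you actually push your last matching through, you get
\[
\Lambda_\sigma\wi{\mathbfit{\Lambda}}_{\beta,\sigma}\,e_\lambda\;=\;\Big(\tfrac{\sigma}{\sigma+\lambda}\Big)^{\beta}\,e_{\sigma\lambda/(\sigma+\lambda)},
\]
which coincides with $Q^{(\beta)}_{t}e_\lambda=(1+\lambda t)^{-\beta}e_{\lambda/(1+\lambda t)}$ at $t=1/\sigma$, not at $t=\sigma$. Thus the deterministic warm-up time produced by your own computation is $1/\sigma$; the ``$\sigma$'' in the displayed statement appears to be a slip (note that the multidimensional version quoted immediately afterwards is stated at $\sigma=1$, where the two agree). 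Your method is fine; only the final identification of the time would need adjusting.
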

\par
For $\beta>0$, the invariant measures $\mu_\beta$ and ${\mathbfit{u}}_\beta$ have infinite weight so the above Bessel processes do not enter in the framework of convergence to equilibrium
and we cannot apply the results presented in the introduction. Nevertheless the  \cmirs\   of Proposition \ref{pro3}
 are useful for simulation purposes of one process
in terms of the other one, especially in the direction of using the birth-and-death process to simulate the diffusion process, as it was seen in \cite{self1}.
\par\sm
\subsubsection{Non-colliding discrete and continuous squared Bessel processes}
We proceed by describing a very elegant  extension of the interweaving relations between squared Bessel processes to the multidimensional setting that has been recently proposed by Assiotis \cite{Assiotis}. More specifically, for any integer $N\geq 1$  and $\beta >0$, let $Q^{N,{\beta}}$ (resp.~$\discret{Q}^{N,{\beta}}$)  be the semigroup of  $N$ independent copies of squared Bessel processes (resp. the discrete squared Bessel process)   of index $\beta-1$ conditioned to never intersect. These semigroups are known to be Feller semigroups acting  on the space $C_0(W_+^N)$ and $\discret{C}_0(\discret{W}_+^N)$ respectively where the Weyl chambers with positive coordinates are defined by
 \begin{align*}
W^N_{+}&=\{ {\bf{x}}=(x_1,\cdots,x_N)\in \mathbb{R}_+^N: x_1\le x_2  \le \cdots \le x_N \} \\
\discret{W}^N_{+}&=\{ \textbf{n}=(n_1,\cdots,n_N)\in \mathbb{Z}_+^N: n_1< n_2 <  \cdots < n_N \}.
\end{align*}
  Then relying on the one-dimensional result that appeared in  \cite[Proposition 13 and 14]{self1},  Assiotis  obtain the following, see \cite[Proposition 1, Theorem 1.4, Remark 1.6]{Assiotis}.
  \begin{pro}
For any integer $N\geq 1$  and $\beta >0$, we have
\begin{equation*}
   Q^{N,(\beta)} \stackrel{1}{ \leftrightsquigarrow} \discret{Q}^{N,(\beta)}
\end{equation*}
where  $\Lambda=\Lambda^N_1$ and  $ \wi \Lambda = \wi{\mathbfit{\Lambda}}^N_{\beta,1}$ are Markov kernels defined respectively, for any $\textbf{n} \in \discret W^N_{+}$ and $\textbf{x} \in W^N_{+}$, by
\begin{align*}
\Lambda_1^N\left(\textbf{x},\textbf{n}\right)&=\frac{\Delta_N(\textbf{n})}{\Delta_N(\textbf{x})}\det \left(\Lambda_1(x_i,n_j)\right)_{i,j=1}^N, \\
\wi{\mathbfit{\Lambda}}^N_{\beta,1}\left(\textbf{n},d\textbf{x}\right)&=\frac{\Delta_N(\textbf{x})}{\Delta_N(\textbf{n})}\det \left(\wi{\mathbfit{\Lambda}}_{\beta,1}(n_i,dx_j\right)_{i,j=1}^Ndx_1\cdots dx_N,
\end{align*}
and $\Delta_N(\textbf{x})=\det\left(x_i^{j-1}\right)^N_{i,j=1}=\prod_{1 \le i <j \le N}^{}(x_j-x_i)$ stands for the Vandermonde determinant.
  \end{pro}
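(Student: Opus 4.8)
\proof
The plan is to derive the $N$-dimensional relation from the one-dimensional symmetric \cmir\ of Proposition~\ref{pro3} in the case $\sigma=1$, namely $Q^{(\beta)}\stackrel{1}{\leftrightsquigarrow}\discret{Q}^{(\beta,1)}$, which packages the four identities
\begin{align*}
&\Gate{Q^{(\beta)}_t}{\Lambda_1}{\discret{Q}^{(\beta,1)}_t},\qquad \Gate{\discret{Q}^{(\beta,1)}_t}{\wi{\mathbfit{\Lambda}}_{\beta,1}}{Q^{(\beta)}_t},\\
&\Lambda_1\,\wi{\mathbfit{\Lambda}}_{\beta,1}=Q^{(\beta)}_1,\qquad\qquad \wi{\mathbfit{\Lambda}}_{\beta,1}\,\Lambda_1=\discret{Q}^{(\beta,1)}_1,
\end{align*}
and to transport them through the Doob $h$-transform with harmonic function $h=\Delta_N$ that produces the non-colliding dynamics. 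Write $q^{(\beta)}_t(x,y)$ and $\discret{q}^{(\beta)}_t(n,m)$ for the one-dimensional transition density and kernel of $Q^{(\beta)}$ and $\discret{Q}^{(\beta,1)}$. It is known (this is part of the analysis of \cite{Assiotis}, see also \cite{self1}) that $\Delta_N$ is a strictly positive function which is harmonic for the $N$-fold product generators on the open Weyl chambers $W^N_{+}$ and $\discret{W}^N_{+}$ with Dirichlet condition at the walls, so that the Karlin--McGregor (resp.\ Lindstr\"om--Gessel--Viennot) formula represents the non-colliding semigroups as the corresponding $h$-transforms of $N$ independent killed copies:
\begin{align*}
Q^{N,(\beta)}_t(\textbf{x},d\textbf{y})&=\frac{\Delta_N(\textbf{y})}{\Delta_N(\textbf{x})}\det\big(q^{(\beta)}_t(x_i,y_j)\big)_{i,j=1}^N\,d\textbf{y},\\
\discret{Q}^{N,(\beta)}_t(\textbf{n},\textbf{m})&=\frac{\Delta_N(\textbf{m})}{\Delta_N(\textbf{n})}\det\big(\discret{q}^{(\beta)}_t(n_i,m_j)\big)_{i,j=1}^N.
\end{align*}

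First I would check that the displayed $\Lambda^N_1$ and $\wi{\mathbfit{\Lambda}}^N_{\beta,1}$ are genuine Markov kernels between $W^N_{+}$ and $\discret{W}^N_{+}$. Non-negativity of $\det(\Lambda_1(x_i,n_j))$ on the chambers is the classical total positivity of the kernel $(x,n)\mapsto e^{-x}x^n/n!$ (and, symmetrically, of $(n,x)\mapsto e^{-x}x^{n+\beta-1}/\Gamma(n+\beta)$ for $\wi{\mathbfit{\Lambda}}^N_{\beta,1}$), which also gives $\Delta_N>0$ on the interiors so that the ratios make sense. For the total mass I would write the Vandermonde as $\Delta_N(\textbf{n})=\det(n_j^{k-1})_{k,j=1}^N$ and apply the Cauchy--Binet (Andr\'eief) identity over $\textbf{n}\in\discret{W}^N_{+}$:
\begin{align*}
\sum_{\textbf{n}\in\discret{W}^N_{+}}\Delta_N(\textbf{n})\det\big(\Lambda_1(x_i,n_j)\big)_{i,j}&=\det\Big(\sum_{n\in\ZZ_+}n^{k-1}\Lambda_1(x_i,n)\Big)_{i,k=1}^N\\
&=\det\big(B_{k-1}(x_i)\big)_{i,k=1}^N=\Delta_N(\textbf{x}),
\end{align*}
where $B_{k-1}$ is the $(k-1)$-th Bell--Touchard polynomial, equal to the $(k-1)$-th moment of a Poisson law of mean $x$, hence monic of degree $k-1$, so the last determinant differs from $\det(x_i^{k-1})_{i,k}$ only by column operations. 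Consequently each row of $\Lambda^N_1$ has mass $1$; the same computation with the two spaces interchanged, using the monic polynomials $n\mapsto\Gamma(n+\beta+k-1)/\Gamma(n+\beta)=\prod_{\ell=0}^{k-2}(n+\beta+\ell)$ in place of the $B_{k-1}$, shows that $\wi{\mathbfit{\Lambda}}^N_{\beta,1}$ is Markovian as well.

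It then remains to transport the four one-dimensional identities, and this is where Cauchy--Binet does all the work. For the intertwining I would combine the Karlin--McGregor description of the killed product semigroup with $\Gate{Q^{(\beta)}_t}{\Lambda_1}{\discret{Q}^{(\beta,1)}_t}$:
\begin{align*}
\int_{W^N_{+}}\det\big(q^{(\beta)}_t(x_i,y_j)\big)\det\big(\Lambda_1(y_j,n_k)\big)\,d\textbf{y}&=\det\big((Q^{(\beta)}_t\Lambda_1)(x_i,n_k)\big)_{i,k}\\
&=\det\big((\Lambda_1\discret{Q}^{(\beta,1)}_t)(x_i,n_k)\big)_{i,k}\\
&=\sum_{\textbf{m}\in\discret{W}^N_{+}}\det\big(\Lambda_1(x_i,m_j)\big)\det\big(\discret{q}^{(\beta)}_t(m_j,n_k)\big),
\end{align*}
using the Andr\'eief identity in the first and last steps (the non-strict parts of the Weyl chambers contribute nothing, being null sets on one side and producing repeated columns on the other). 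Multiplying through by $\Delta_N(\textbf{n})/\Delta_N(\textbf{x})$ and redistributing the Vandermonde factors identifies the left-hand side with $(Q^{N,(\beta)}_t\Lambda^N_1)(\textbf{x},\textbf{n})$ and the right-hand side with $(\Lambda^N_1\discret{Q}^{N,(\beta)}_t)(\textbf{x},\textbf{n})$, so $\Gate{Q^{N,(\beta)}_t}{\Lambda^N_1}{\discret{Q}^{N,(\beta)}_t}$. The mirror argument gives $\Gate{\discret{Q}^{N,(\beta)}_t}{\wi{\mathbfit{\Lambda}}^N_{\beta,1}}{Q^{N,(\beta)}_t}$, and the very same Cauchy--Binet step applied to $\Lambda_1\wi{\mathbfit{\Lambda}}_{\beta,1}=Q^{(\beta)}_1$ and to $\wi{\mathbfit{\Lambda}}_{\beta,1}\Lambda_1=\discret{Q}^{(\beta,1)}_1$ yields $\Lambda^N_1\wi{\mathbfit{\Lambda}}^N_{\beta,1}=Q^{N,(\beta)}_1$ and $\wi{\mathbfit{\Lambda}}^N_{\beta,1}\Lambda^N_1=\discret{Q}^{N,(\beta)}_1$, i.e.\ \eqref{warm-up} and its symmetric companion \eqref{eq:sym} with warm-up time $\tau=\delta_1$. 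This establishes $Q^{N,(\beta)}\stackrel{1}{\leftrightsquigarrow}\discret{Q}^{N,(\beta)}$; more generally, the argument shows that any one-dimensional symmetric \cmir\ between processes with totally positive transition kernels lifts, through the Karlin--McGregor transform, to a symmetric \cmir\ between the associated non-colliding particle systems.

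I expect the main obstacle to be analytic rather than algebraic. One has to justify the Karlin--McGregor representation itself together with the strict positivity and harmonicity of $\Delta_N$ up to the boundary of the Weyl chambers (delicate near $0$ when $\beta$ is small), the Feller property of the non-colliding semigroups on $C_0(W^N_{+})$ and $\discret{C}_0(\discret{W}^N_{+})$, and the absolute convergence needed to interchange the infinite sums and the integrals with the determinants in each Cauchy--Binet step. All of this is carried out in \cite{Assiotis}; the only genuinely new remark here is that a single Doob transform transports simultaneously all four relations constituting the \cmir, precisely because Cauchy--Binet is a purely multilinear device and is insensitive to which pair of one-dimensional kernels it is fed. \wwtbp
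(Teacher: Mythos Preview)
The paper does not give its own proof of this proposition: it is quoted as a result of Assiotis \cite{Assiotis}, with the only comment being that it relies on the one-dimensional interweaving relation of Proposition~\ref{pro3}. Your sketch is therefore not competing with a proof in the paper but is rather a reconstruction of the argument the paper delegates to \cite{Assiotis}.

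That said, your reconstruction is sound and is the natural one. The two structural ingredients you isolate---the Karlin--McGregor/$h$-transform description of the non-colliding semigroups with $h=\Delta_N$, and the repeated use of the Cauchy--Binet/Andr\'eief identity to lift each of the four one-dimensional identities $\Gate{Q^{(\beta)}}{\Lambda_1}{\discret{Q}^{(\beta,1)}}$, $\Gate{\discret{Q}^{(\beta,1)}}{\wi{\mathbfit{\Lambda}}_{\beta,1}}{Q^{(\beta)}}$, $\Lambda_1\wi{\mathbfit{\Lambda}}_{\beta,1}=Q^{(\beta)}_1$, $\wi{\mathbfit{\Lambda}}_{\beta,1}\Lambda_1=\discret{Q}^{(\beta,1)}_1$ to their $N$-particle determinantal counterparts---are exactly the mechanism behind \cite[Proposition~1, Theorem~1.4]{Assiotis}. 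Your verification that $\Lambda_1^N$ and $\wi{\mathbfit{\Lambda}}^N_{\beta,1}$ are Markov kernels via total positivity and the monic-polynomial trick with Bell--Touchard moments is also the standard route. You are equally right to flag the genuinely delicate part as analytic (well-posedness of the $h$-transform near the walls, Feller property, absolute convergence in Cauchy--Binet), and to point to \cite{Assiotis} for those details; the paper itself makes no claim to supply them.
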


We mention that  the Markov realizations of the semigroups  $Q^{N,(\beta)}$ and $\discret Q^{N,(\beta)}$ appear in random matrix theory as the dynamics of the eigenvalues of the so-called continuous and discrete Laguerre ensembles and refer to \cite{Assiotis} for further connections between these objects and other algebraic structures.

\subsection{Classical and discrete Laguerre processes}\label{cadLp}

A natural way to transform the transient Bessel processes into recurrent processes is recalled in \cite{self1} and it leads to the Laguerre processes.
This procedure slightly modifies  the  \cmirs\   and we ended up with the following results.

\noindent For $\beta,\sigma>0$, consider the classical \textbf{Laguerre differential operator} $L_{\beta,\sigma}$  on $\RR_+$ acting on $\esC^{\iy}_{\mathrm{b}}(\RR_+)$, the space of  bounded smooth functions with bounded derivatives on $\RR_+$, via
\bqn{Lsigma}
\fo f\in\esC^{\iy}_{\mathrm{b}}(\RR_+),\,\fo x\in(0,+\iy),\qquad
L_{\beta,\sigma}[f](x)&=& \sigma x\pa^2f(x)+(\sigma\beta -x)\pa f(x) \eqn
This operator is a one-dimensional diffusion generator and it is easy to check that its unique invariant  (even reversible) probability measure $\nu_{\beta,\sigma}$ on $\RR_+$,
 is the \textbf{gamma distribution} of shape parameter $\beta$ and scale parameter $\sigma$, i.e.
\bq
\fo x\in(0,+\iy ),\qquad \nu_{\beta,\sigma}(dx)&=& \frac{x^{\beta-1}\exp(-x/\sigma)}{\sigma^{\beta}\Gamma(\beta)}\, dx\eq
It follows (via Freidrichs theory, see e.g.\ the book of Akhiezer and Glazman \cite{MR615737}) that $L_{\beta,\sigma}$ can be extended into a self-adjoint operator on $\esL^2(\nu_{\beta,\sigma})$.
The associated continuous Markov semigroup is denoted $P^{(\beta,\sigma)}\df(P^{(\beta,\sigma)}_t)_{t\geq 0}$.
\par\sm \noindent
An analogue discrete Laguerre birth-and-death generator $ \discret{L}_{\beta,\sigma}$ is defined by
\bqn{dLg}  \fo n\in \ZZ_+, \qquad\discret{L}_{\beta,\sigma}&\df&\sigma(n+\beta) \pa_+ + (\sigma+1)n\pa_-\eqn
This generator admits an invariant (even reversible) probability measure $\wi{\mathbfit{v}}_{\beta,\sigma}$ on $\ZZ_+$,
which is the negative binomial distribution of parameters $\beta$ and $\sigma/(1+\sigma)$, i.e.
\bq
\fo n\in\ZZ_+,\qquad
\wi{\mathbfit{v}}_{\beta,\sigma}(n)&\df&(1+\sigma)^{-\beta}\lt(\frac{\sigma}{\sigma+1}\rt)^n\frac{(n+\beta-1)(n+\beta-1)\cdots \beta}{n!}
\eq
Denote $\discret{P}^{(\beta,\sigma)}\df(\discret{P}^{(\beta,\sigma)}_t)_{t\geq 0}$ the Markov semigroup generated by $\discret{L}_{\beta,\sigma}$. In  \cite{self1}, we have shown the following symmetric  \cmir\   with deterministic warm-up time.
\begin{pro}\label{pro4}
For any $\beta,\sigma,\varsigma>0$, we have
\begin{equation*}
   P^{(\beta,\varsigma)} \stackrel{\ln(1+\frac{1}{\varsigma\sigma})}{\leftrightsquigarrow} \discret{P}^{(\beta,\varsigma\sigma)} 
\end{equation*}
where $\Lambda =\Lambda_\sigma$ and $ \wi \Lambda = \wi\Lambda_{\beta,\sigma+\frac1\varsigma}$.
\end{pro}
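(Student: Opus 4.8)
The plan is to establish the two intertwinings of the interweaving diagram $\Cmir{P^{(\beta,\varsigma)}}{\Lambda_\sigma}{\discret P^{(\beta,\varsigma\sigma)}}{\wi\Lambda_{\beta,\sigma+\frac1\varsigma}}$, then to compute the composition $\Lambda_\sigma\wi\Lambda_{\beta,\sigma+\frac1\varsigma}$ and match it with $P^{(\beta,\varsigma)}_{t_0}$ for $t_0=\ln(1+\tfrac1{\varsigma\sigma})$, and finally to invoke \refprop{symmetric}. Working in the $\esL^2$ framework used to define the semigroups, for the first relation I would apply $\Lambda_\sigma$ to the exponentials $\mathbfit{f}_z\st n\mapsto z^{n}$, $z\in(0,1]$: one gets $\Lambda_\sigma\mathbfit{f}_z(x)=\exp(a x)$ with $a\df\sigma(z-1)$, and a direct differentiation shows that both $L_{\beta,\varsigma}[\Lambda_\sigma\mathbfit{f}_z]$ and $\Lambda_\sigma[\discret L_{\beta,\varsigma\sigma}\mathbfit{f}_z]$ equal $(\varsigma a^{2}x+\varsigma\beta a-a x)e^{a x}$. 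Since these functions (or, equally, the Laguerre/Meixner eigenpolynomials) span a core of $\discret L_{\beta,\varsigma\sigma}$, a standard lifting argument --- uniqueness for the backward equation, together with boundedness of $\Lambda_\sigma$ as a contraction from $\esL^{2}(\wi{\mathbfit{v}}_{\beta,\varsigma\sigma})$ to $\esL^{2}(\nu_{\beta,\varsigma})$ --- gives $P^{(\beta,\varsigma)}_t\Lambda_\sigma=\Lambda_\sigma\discret P^{(\beta,\varsigma\sigma)}_t$ for all $t\geq0$.

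For the reverse intertwining I would use reversibility, as in Remark~\ref{1errems}(e). A short computation shows that the mixed Poisson law $\nu_{\beta,\varsigma}\Lambda_\sigma$ is exactly the negative binomial $\wi{\mathbfit{v}}_{\beta,\varsigma\sigma}$, so $\Lambda_\sigma$ is a Markov contraction $\esL^{2}(\wi{\mathbfit{v}}_{\beta,\varsigma\sigma})\to\esL^{2}(\nu_{\beta,\varsigma})$, its adjoint $\Lambda_\sigma^{*}$ is again a Markov operator, and dualising $P^{(\beta,\varsigma)}_t\Lambda_\sigma=\Lambda_\sigma\discret P^{(\beta,\varsigma\sigma)}_t$ --- both semigroups being self-adjoint --- yields $\discret P^{(\beta,\varsigma\sigma)}_t\Lambda_\sigma^{*}=\Lambda_\sigma^{*}P^{(\beta,\varsigma)}_t$. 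Bayes' formula (prior $\nu_{\beta,\varsigma}$, a gamma of rate $1/\varsigma$; likelihood $\propto(\sigma x)^{n}e^{-\sigma x}$; hence posterior $\propto x^{n+\beta-1}e^{-(\sigma+1/\varsigma)x}$) then identifies $\Lambda_\sigma^{*}$ with the gamma kernel $\wi\Lambda_{\beta,\sigma+\frac1\varsigma}$. (Alternatively, $\discret L_{\beta,\varsigma\sigma}\wi\Lambda_{\beta,\sigma+\frac1\varsigma}=\wi\Lambda_{\beta,\sigma+\frac1\varsigma}L_{\beta,\varsigma}$ can be checked directly on the exponentials $x\mapsto e^{-\lambda x}$.)

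Next I would verify \eqref{warm-up}, i.e.\ $\Lambda_\sigma\wi\Lambda_{\beta,\sigma+\frac1\varsigma}=P^{(\beta,\varsigma)}_{t_0}$. The kernel $\Lambda_\sigma\wi\Lambda_{\beta,\sigma+\frac1\varsigma}(x,\pot)$ is the law of a Poisson$(\sigma x)$ mixture of gamma variables of shape $\beta+N$ and rate $\sigma+\tfrac1\varsigma$, of Laplace transform $\bigl(\tfrac{\varsigma\sigma+1}{\varsigma\sigma+1+\varsigma\lambda}\bigr)^{\beta}\exp\!\bigl(-\tfrac{\varsigma\sigma x\lambda}{\varsigma\sigma+1+\varsigma\lambda}\bigr)$. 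On the other hand, the Laguerre flow is an exponential time change of the squared Bessel flow: checking the generator and using the self-similarity $\theta_aQ^{(\beta)}_u=Q^{(\beta)}_{u/a}\theta_a$ (with $\theta_a\st f\mapsto f(a\,\pot)$ the dilation) one gets $P^{(\beta,\varsigma)}_t=Q^{(\beta)}_{\varsigma(e^{t}-1)}\theta_{e^{-t}}$, so the transition kernel $P^{(\beta,\varsigma)}_t(x,\pot)$ has Laplace transform $\bigl(1+\varsigma\lambda(1-e^{-t})\bigr)^{-\beta}\exp\!\bigl(-\tfrac{x\lambda e^{-t}}{1+\varsigma\lambda(1-e^{-t})}\bigr)$. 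Substituting $e^{-t_0}=\tfrac{\varsigma\sigma}{\varsigma\sigma+1}$ (equivalently $t_0=\ln(1+\tfrac1{\varsigma\sigma})$) makes the two transforms coincide for all $\lambda,x>0$, so the kernels agree and $P^{(\beta,\varsigma)}\stackrel{t_0}{\looparrowleft}\discret P^{(\beta,\varsigma\sigma)}$. (One could instead feed \refprop{pro3} into this deformation and transport the kernels and the warm-up time, but the conventions are more delicate that way.)

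Finally, $\Lambda_\sigma$ is one-to-one from $\esB(\ZZ_+)$ to $\esB(\RR_+)$: if $\sum_n\tfrac{(\sigma x)^{n}}{n!}e^{-\sigma x}\mathbfit{g}(n)\equiv 0$ then, multiplying by $e^{\sigma x}$ and matching power series, $\mathbfit{g}\equiv0$. Hence \refprop{symmetric} applies and promotes the interweaving to the symmetric one $P^{(\beta,\varsigma)}\stackrel{t_0}{\leftrightsquigarrow}\discret P^{(\beta,\varsigma\sigma)}$, in particular $\wi\Lambda_{\beta,\sigma+\frac1\varsigma}\Lambda_\sigma=\discret P^{(\beta,\varsigma\sigma)}_{t_0}$. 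The two places I expect to be delicate are (i) the passage from the formal generator intertwining to the semigroup one (a domain/core matter, best handled by a resolvent identity or a uniqueness argument for the evolution equation), and (ii) pinning down the warm-up time: it is exactly the exponential change of clock $u=\varsigma(e^{t}-1)$, $a=e^{-t}$ converting the transient squared Bessel semigroup into the ergodic Laguerre one that produces the logarithm $t_0=\ln(1+\tfrac1{\varsigma\sigma})$, and the bookkeeping of the three scale parameters $\sigma$, $\sigma+\tfrac1\varsigma$ and $\varsigma\sigma$ has to be done with care.
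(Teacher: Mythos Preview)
Your proposal is correct and follows the same scheme as the paper: establish the two intertwinings, identify $\Lambda_\sigma\wi\Lambda_{\beta,\sigma+1/\varsigma}$ with $P^{(\beta,\varsigma)}_{t_0}$, and promote to the symmetric relation via \refprop{symmetric} using injectivity of $\Lambda_\sigma$. The paper itself does not give the details here --- it simply cites \cite{self1} for the interweaving and then remarks that the symmetric part follows from \refprop{symmetric} together with the one-to-one property of $\Lambda_\sigma$ (Lemma~2.2 of \cite{self1}) --- so your write-up is in effect a self-contained reconstruction of that argument. Your use of duality (Remark~\ref{1errems}(e)) to obtain $\wi\Lambda_{\beta,\sigma+1/\varsigma}=\Lambda_\sigma^*$ via the gamma/Poisson Bayes computation, and of the squared-Bessel time change to pin down $t_0$, are exactly the natural ingredients; the two caveats you flag (core/domain passage from generators to semigroups, and the scale bookkeeping) are the right places to be careful, and your Laplace-transform check for \eqref{warm-up} is clean and correct.
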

The last relation can be seen as a consequence of the last-but-one identity, via Proposition \ref{symmetric}, since $\Lambda_\sigma$, from \cite[Lemma 2.2]{self1}, is  one-to-one.
The relations  of Proposition \ref{pro4} can be summarized by the following  diagram:
\par
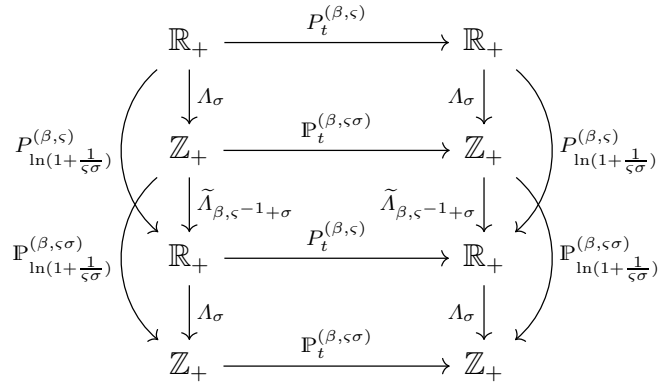
\begin{figure}[H]\centering
\begin{tikzcd}[column sep=30mm,scale=2]
{\RR_+ }\arrow[r,  "P^{(\beta,\varsigma)}_t"] \arrow[d,
"\Lambda_{\sigma}"] \arrow[dd, bend right=50 ,
"P_{\ln(1+\frac{1}{\varsigma\sigma})}^{(\beta,\varsigma)}"']
& \RR_+ \arrow[d, "\Lambda_{\sigma}"' ]  \arrow[dd, bend left=50 , "P_{\ln(1+\frac{1}{\varsigma\sigma})}^{(\beta,\varsigma)}"]\\
{\ZZ_+} \arrow[r,
"\discret{P}^{(\beta,\varsigma\sigma)}_{t}" ]\arrow[d, "\wi\Lambda_{\beta,\varsigma^{-1}+\sigma}"]\arrow[dd, bend right=50 , "\discret{P}_{\ln(1+\frac{1}{\varsigma\sigma})}^{(\beta,\varsigma\sigma)}"']
&
{\ZZ_+} \arrow[d,
"\wi\Lambda_{\beta,\varsigma^{-1}+\sigma}"' ]\arrow[dd, bend left=50 , "\discret{P}_{\ln(1+\frac{1}{\varsigma\sigma})}^{(\beta,\varsigma\sigma)}"]\\
{\RR_+} \arrow[r,
 "P^{(\beta,\varsigma)}_t"] \arrow[d, "\Lambda_{\sigma}" ]
&
{\RR_+} \arrow[d, "\Lambda_{\sigma}"' ]\\
\ZZ_+ \arrow[r, "\discret{P}^{(\beta,\varsigma\sigma)}_{t}"]
& \ZZ_+
\end{tikzcd}
\caption{Laguerre intertwining relations}\label{fig4}
\end{figure}\par

The  \cmirs\   between the continuous and discrete Laguerre processes enable to deduce links between their speed of convergence
to equilibrium. As in the introduction, let us present them in the
usual entropy sense (see Section \ref{poTaP} for generalisations).
First we recall the logarithmic Sobolev inequalities satisfied by the Laguerre semigroups.\par\sm
We start with the classical situation.
For any $\beta,\varsigma >0$, the \textbf{logarithmic Sobolev constant} $\alpha(\beta,\varsigma)$ associated to the generator $L_{\beta,\varsigma}$ defined in \eqref{Lsigma} is
\bqn{albv}
\alpha(\beta,\varsigma)&\df&\inf_{f\in\esC_{\mathrm{b}}^1(\RR_+)\st \nu_{\beta,\varsigma}[f^2]=1}\frac{4\varsigma\int_{\RR_+} xf^{\prime 2}(x)\, \nu_{\beta,\varsigma}(dx)}{\int_{\RR_+} f^2(x)\ln(f^2(x))\, \nu_{\beta,\varsigma}(dx)}
 \eqn
 (for any $k\in\NN$, $\esC_{\mathrm{b}}^k(\RR_+)$ is the space of bounded continuously $k$ times differentiable functions on $\RR_+$, with bounded derivatives).
 The numerator in \eqref{albv} is four times the \textbf{Dirichlet form (energy)} $\cE_{\beta,\varsigma}(f,f)$ associated to $L^{(\beta,\varsigma)}$ and defined, at least for $f\in \esC_{\mathrm{b}}^2(\RR_+)$,
 by
 \bq
 \cE_{\beta,\varsigma}(f,f)&\df&-\nu_{\beta,\varsigma}[f L_{\beta,\varsigma}[f]]\\
 &=&\varsigma\int_{\RR_+} x f^{\prime 2}(x)\, \nu_{\beta,\varsigma}(dx)\eq
 where the last equality is obtained by integration by parts and the last expression enables to extend the domain of definition of $\cE_{\beta,\varsigma}$.\par
 It is well-known (see for instance the book of Ané et al.\ \cite{MR2002g:46132}) that the logarithmic Sobolev constant is bounded above by twice the spectral gap of the associated generator.
 In the present setting, it implies that $\alpha(\beta,\varsigma)\leq 2$ for any $\beta,\varsigma>0$, since the spectrum of
 $L_{\beta,\varsigma}$ is $-\ZZ_+$ with eigenvalues of multiplicity 1, and so its spectral gap is 1.
In fact the constant $\alpha(\beta,\varsigma)$ does not depend on $\varsigma$:
 \begin{lem}\label{alpbet}
 For any $\beta,\varsigma >0$, we have $\alpha(\beta,\varsigma)=\alpha(\beta)$, where
 $\alpha(\beta)\df\alpha(\beta,1)$.
 \end{lem}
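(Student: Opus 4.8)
The plan is to reduce the statement to the case $\varsigma=1$ by the obvious rescaling of the state space, which turns out to be a unitary equivalence preserving simultaneously the Dirichlet energy and the entropy functional. For $\varsigma>0$ let $\theta_\varsigma\st\RR_+\ri\RR_+$ be the dilation $\theta_\varsigma(y)\df\varsigma y$, and let $U_\varsigma$ be the composition operator $(U_\varsigma g)(x)\df g(x/\varsigma)$. A one-line change of variables shows that the pushforward of $\nu_{\beta,1}$ by $\theta_\varsigma$ is precisely $\nu_{\beta,\varsigma}$ (a gamma law of shape $\beta$ and scale $1$ rescaled by $\varsigma$ is a gamma law of shape $\beta$ and scale $\varsigma$), so $U_\varsigma$ is a unitary isomorphism from $\esL^2(\nu_{\beta,1})$ onto $\esL^2(\nu_{\beta,\varsigma})$, with inverse $U_{1/\varsigma}$, and it maps $\esC^1_{\mathrm b}(\RR_+)$ bijectively onto itself.

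First I would record the conjugation of the generators: if $f=U_\varsigma g$ then $f'(x)=\varsigma^{-1}g'(x/\varsigma)$ and $f''(x)=\varsigma^{-2}g''(x/\varsigma)$, and substituting into \eqref{Lsigma} gives, after setting $y=x/\varsigma$,
\[
L_{\beta,\varsigma}[U_\varsigma g](x)= \tfrac{x}{\varsigma}\,g''(x/\varsigma)+\bigl(\beta-\tfrac{x}{\varsigma}\bigr)g'(x/\varsigma)=\bigl(U_\varsigma L_{\beta,1}[g]\bigr)(x),
\]
that is $L_{\beta,\varsigma}=U_\varsigma L_{\beta,1}U_\varsigma^{-1}$. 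Next, using $f'(x)^2=\varsigma^{-2}g'(x/\varsigma)^2$ and the same substitution, the Dirichlet energy transforms as
\[
\cE_{\beta,\varsigma}(U_\varsigma g,U_\varsigma g)=\varsigma\int_{\RR_+}x\,(U_\varsigma g)'(x)^2\,\nu_{\beta,\varsigma}(dx)=\int_{\RR_+}y\,g'(y)^2\,\nu_{\beta,1}(dy)=\cE_{\beta,1}(g,g),
\]
while the unitarity of $U_\varsigma$ at once gives $\nu_{\beta,\varsigma}[(U_\varsigma g)^2]=\nu_{\beta,1}[g^2]$ and $\int (U_\varsigma g)^2\ln\bigl((U_\varsigma g)^2\bigr)\,d\nu_{\beta,\varsigma}=\int g^2\ln(g^2)\,d\nu_{\beta,1}$.

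Combining these three identities, for every $g\in\esC^1_{\mathrm b}(\RR_+)$ with $\nu_{\beta,1}[g^2]=1$ the Rayleigh quotient in \eqref{albv} associated to the parameter $\varsigma$ and evaluated at $f=U_\varsigma g$ equals the one associated to the parameter $1$ and evaluated at $g$; since $f\mapsto U_\varsigma^{-1}f$ is a bijection of the admissible test functions that preserves the normalisation constraint, passing to the infimum yields $\alpha(\beta,\varsigma)=\alpha(\beta,1)=\alpha(\beta)$, which is the claim. There is no real obstacle in this argument; the only points deserving a word are that $U_\varsigma$ genuinely preserves the test-function class $\esC^1_{\mathrm b}(\RR_+)$ (clear, since a dilation preserves boundedness of a function together with that of its derivative) and that the extension of $\cE_{\beta,\varsigma}$ to that class used in \eqref{albv} is compatible with $U_\varsigma$, which holds because the integration-by-parts identity defining that extension is itself covariant under dilation.
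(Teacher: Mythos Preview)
Your proof is correct and follows essentially the same approach as the paper: both rely on the dilation of $\RR_+$ by $\varsigma$ and verify that it simultaneously preserves the $\esL^2$ normalisation, the entropy, and (thanks to the factor $\varsigma$ in the carr\'e du champ) the Dirichlet energy. The only cosmetic difference is that the paper writes the change of variables in the reverse direction via $M_\varsigma f(x)=f(\varsigma x)$ and omits the explicit generator conjugation, working directly with the variational formula \eqref{albv}.
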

 \begin{rem} \label{rem:lsclag}
 The constant $\alpha(\beta)$ has been well-studied. Via the famous $\Gamma_2$-criterion, Bakry \cite{MR1417973} has shown that $\alpha(\beta)= 1$ for all $\beta\geq 1/2$.
Otherwise, the behavior of $\alpha(\beta)$ changes when $\beta>0$ is going to $0_+$, since it converges to zero  as
 $\alpha(\beta) \sim -4/\ln \beta $, see \cite{MR1971588}. We also refer to Corollary for an alternative analysis based on the concept of interweaving relation  of the convergence to equilibrium in entropy for $0\leq \beta <\frac12$.
 \end{rem}
 \proof
For any $\varsigma>0$, let $M_\varsigma$ be the dilation operator acting on
any function $f$ defined on $\RR_+$ via
\bq
 M_\varsigma f (x)&=&f(\varsigma x)\eq
An immediate linear change of variable shows that
 for any $\beta,\varsigma >0$, we have $\nu_{\beta,\varsigma}=\nu_\beta M_\varsigma$ (where $\nu_\beta$ stands for $\nu_{\beta,1}$).  For $f\in\esC_{\mathrm{b}}^1(\RR_+)$ with $\nu_{\beta,\varsigma}[f^2]=1$, consider the function $\wi f \df M_\varsigma f$.
 We have on the one hand,
 \bq\nu_{\beta,\varsigma}[f^2]&=&\nu_{\beta}[\wi f^2]\\
 \int_{\RR_+} f^2(x)\ln f^2(x) \, \nu_{\beta,\varsigma}(dx)&=& \int_{\RR_+} \wi f^2(x)\ln \wi f^2(x)\, \nu_{\beta}(dx)\eq
 and on the other hand,
 \bq \int_{\RR_+} xf^{\prime 2}(x)\, \nu_{\beta,\varsigma}(dx)&=&\frac{1}{\varsigma}\int_{\RR_+} x\wi f^{\prime 2}(x)\, \nu_{\beta}(dx)
 \eq
 The announced result now follows from the bijectivity of the mapping $f\mapsto \wi f$
between
 $\{f\in\esC_{\mathrm{b}}^1(\RR_+)\st \nu_{\beta,\varsigma}[f^2]=1\}$ and
 $\{\wi f\in\esC_{\mathrm{b}}^1(\RR_+)\st \nu_{\beta}[\wi f^2]=1\}$.
 \wwtbp
 \par
Here we are interested in $\alpha(\beta)$ since for any initial distribution $m_0$ on $\RR_+$,
 we have
 \bqn{conEntL}
 \fo t\geq 0,\qquad \Ent(m_0P_t^{(\beta,\varsigma)}\vert\nu_{\beta,\varsigma})&\leq & \exp(-\alpha(\beta)t)\Ent(m_0\vert\nu_{\beta,\varsigma})\eqn
 (of course, such a bound is only relevant when the initial relative entropy $\Ent(m_0\vert\nu_{\beta,\varsigma})$ is finite) and $\alpha(\beta)$ is optimal for these equalities to hold
 for any initial distribution $m_0\in\cP((0,+\iy))$ and for any time $t\geq 0$.
\par\sm
The quantitative convergence to equilibrium in the entropy sense has not been investigated for the  discrete Laguerre generators.
A priori, we have the following information.
 For $\beta,\sigma>0$, the modified logarithmic Sobolev constant ${\mathbfit{\alpha}}_m(\beta,\sigma)$ associated to the generator $ \discret{L}_{\beta,\sigma}$ defined in \eqref{dLg} is
\bqn{fabs}
{\mathbfit{\alpha}}_m(\beta,\sigma)&\df&\inf_{f\in\esFf(\ZZ_+)\st \mathbfit{v}_{\beta,\sigma}[\mathbfit{f}^2]=1}\frac{\mathds{E}_{\beta,\sigma}(\mathbfit{f}^2,\ln(\mathbfit{f}^2))}
{\mathbfit{v}_{\beta,\sigma}[ \mathbfit{f}^2\ln(\mathbfit{f}^2)]}
 \eqn
 where $\esFf(\ZZ_+)$ is the space of functions defined on $\ZZ_+$ which vanish except on a finite subset of points and where the Dirichlet form $\mathds{E}_{\beta,\sigma}(\mathbfit{f},\mathbfit{g})$ of two functions $\mathbfit{f},\mathbfit{g}\in \esFf(\ZZ_+)$ is given by
 \bq
 \mathds{E}_{\beta,\sigma}(\mathbfit{f},\mathbfit{g})&\df&-\mathbfit{v}_{\beta,\sigma}[\mathbfit{f}\mathds{L}_{\beta,\sigma}[\mathbfit{g}]]\\
 &=& \sum_{n\in\ZZ_+}(\mathbfit{f}(n+1)-\mathbfit{f}(n))(\mathbfit{g}(n+1)-\mathbfit{g}(n))\mathbfit{v}_{\beta,\sigma}(n)\mathds{L}_{\beta,\sigma}(n,n+1)
\eq
Again, the interest of ${\mathbfit{\alpha}}_m(\beta,\sigma)$ is the discrete analogue of \eqref{conEntL}: for any initial distribution $\mathbfit{m}_0$ on $\ZZ_+$,
we have
 \bqn{dconEntL}
 \fo t\geq 0,\qquad \Ent(\mathbfit{m}_0\discret{P}_t^{(\beta,\sigma)}\vert\mathbfit{v}_{\beta,\sigma})&\leq & \exp(-{\mathbfit{\alpha}}_m(\beta,\sigma)t)\Ent(\mathbfit{m}_0\vert\mathbfit{v}_{\beta,\sigma})\eqn
 (for the deduction of this bound and \eqref{conEntL} by differentiating their respective left-hand-side.\ with respect to the time $t\geq 0$, see again the book of Ané et al.\ \cite{MR2002g:46132}) and ${\mathbfit{\alpha}}_m(\beta,\sigma)$ is optimal for these inequalities  to hold
 for any initial distribution $\mathbfit{m}_0\in\cP(\ZZ_+)$ and for any time $t\geq 0$. We also have that ${\mathbfit{\alpha}}_m(\beta,\sigma)$ is bounded above by twice the spectral gap of $\mathds{L}_{\beta,\sigma}$.
Namely ${\mathbfit{\alpha}}_m(\beta,\sigma)\leq 2$ for any $\beta,\sigma>0$, since the spectrum of $\mathds{L}_{\beta,\sigma}$ is $-\ZZ_+$.
 Unfortunately, there is no proper way to estimate from below ${\mathbfit{\alpha}}_m(\beta,\sigma)$, which is only known in very few situations, especially related to the Poisson distribution, see Wu \cite{MR1800540}.
 That is why ${\mathbfit{\alpha}}_m(\beta,\sigma)$ is often replaced by the classical logarithmic Sobolev constant  $\mathbfit{\alpha}(\beta,\sigma)$, given by
 \bqn{wifabs}
{\mathbfit{\alpha}}(\beta,\sigma)&\df&\inf_{\mathbfit{f}\in\esFf(\ZZ_+)\st \mathbfit{v}_{\beta,\sigma}[\mathbfit{f}^2]=1}\frac{4\mathds{E}_{\beta,\sigma}(\mathbfit{f},\mathbfit{f}))}
{\mathbfit{v}_{\beta,\sigma}[ \mathbfit{f}^2\ln(\mathbfit{f}^2)]}
 \eqn
It can be checked that ${\mathbfit{\alpha}}(\beta,\sigma)\leq {\mathbfit{\alpha}}_m(\beta,\sigma)$, so that \eqref{dconEntL} still holds with ${\mathbfit{\alpha}}_m(\beta,\sigma)$ replaced by ${\mathbfit{\alpha}}(\beta,\sigma)$,
with the advantage that the latter ergodic constant can be estimated
  via
 discrete Hardy's inequalities
 (cf.\ \cite{MR1710983}):
 \par
 Consider the quantity
 \bq
 C_{\beta,\sigma}&\df& \min_{n\in\ZZ_+}\max(C_{\beta,\sigma}^-(n),C_{\beta,\sigma}^+(n))\eq
 where for any $n\in\ZZ_+$, we take
 \bq
 C_{\beta,\sigma}^-(n)&\df&\sup_{m<n}\lt(\sum_{l=m}^{n-1}\frac1{\mathbfit{v}_{\beta,\sigma}(l)\mathds{L}_{\beta,\sigma}(l,l+1)}\rt)\mathbfit{v}_{\beta,\sigma}(\lin 0, m\rin)\ln(1/\mathbfit{v}_{\beta,\sigma}(\lin 0, m\rin))\\
  C_{\beta,\sigma}^+(n)&\df&\sup_{m>n}\lt(\sum_{l=n}^{m-1}\frac1{\mathbfit{v}_{\beta,\sigma}(l)\mathds{L}_{\beta,\sigma}(l,l+1)}\rt)\mathbfit{v}_{\beta,\sigma}(\lin m, \iy\lin)\ln(1/\mathbfit{v}_{\beta,\sigma}(\lin m, \iy\lin))\eq\par
We have the general bounds
\bqn{Hardy}
\frac1{10}\frac1{C_{\beta,\sigma}}\ \leq \ {\mathbfit{\alpha}}(\beta,\sigma)\ \leq \ \frac83\lt(1-\frac{\sqrt{5}}{2\sqrt{2}}\rt)^{-1}\frac1{ C_{\beta,\sigma}}\eqn
These expressions can be exploited to get reasonably accurate estimates on ${\mathbfit{\alpha}}(\beta,\sigma)$ in terms of $\beta$ and $\sigma$, in particular ${\mathbfit{\alpha}}_m(\beta,\sigma)\geq {\mathbfit{\alpha}}(\beta,\sigma)>0$
for all $\beta,\sigma>0$ (insuring that the bound \eqref{dconEntL} is not trivial).
\par
Nevertheless, the underlying computations are not so nice, while resorting to  \cmirs\   eventually leads to better bounds on the convergence to equilibrium in the entropy sense.
More precisely, as a particular consequence of Theorem \ref{theo1} applied
to  the three last lines of Figure~\ref{fig4}, with $\varsigma=1$, we get
\begin{cor}\label{alterna}
For any initial probability $\mathbfit{m}_0$  on  $\ZZ_+$ and for any $\beta,\sigma>0$ and $t\geq 0$, we have
\bq
\Ent( \mathbfit{m}_0\discret{P}_t^{(\beta,\sigma)}\vert\mathbfit{v}_{\beta,\sigma})&\leq &\left(\frac{\sigma+1}{\sigma}\right)^{\alpha(\beta)} e^{-\alpha(\beta)t}  \: \Ent( \mathbfit{m}_0\vert\mathbfit{v}_{\beta,\sigma})
 \eq
 where we recall that $\alpha(\beta)=1$ for any $\beta\geq 1$, see Remark \ref{rem:lsclag}.
\end{cor}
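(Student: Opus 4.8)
The plan is to specialise the symmetric interweaving relation of Proposition~\ref{pro4} to $\varsigma=1$ and then feed it into Theorem~\ref{theo1}. Taking $\varsigma=1$ there, Proposition~\ref{pro4} reads $P^{(\beta,1)}\stackrel{t_0}{\leftrightsquigarrow}\discret{P}^{(\beta,\sigma)}$ with intertwining kernels $\Lambda=\Lambda_\sigma$, $\wi\Lambda=\wi\Lambda_{\beta,\sigma+1}$ and deterministic warm-up time $t_0\df\ln(1+\tfrac1\sigma)=\ln\!\big(\tfrac{\sigma+1}{\sigma}\big)$. Since a symmetric interweaving relation is, by its very definition, symmetric under exchanging the two semigroups (relabelling $\Lambda\leftrightarrow\wi\Lambda$), the relation also reads $\discret{P}^{(\beta,\sigma)}\stackrel{t_0}{\looparrowleft}P^{(\beta,1)}$: now $\wi\Lambda_{\beta,\sigma+1}$ is the intertwining kernel from $\ZZ_+$ to $\RR_+$, $\Lambda_\sigma$ the one from $\RR_+$ to $\ZZ_+$, and the warm-up kernel is $\wi\Lambda_{\beta,\sigma+1}\Lambda_\sigma=\discret{P}^{(\beta,\sigma)}_{t_0}$ (this is precisely \eqref{eq:sym} for the original relation). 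In this reading the continuous Laguerre semigroup $P^{(\beta,1)}$ is the reference semigroup $\wi P$; its invariant probability is $\nu_{\beta,1}$, that of $\discret{P}^{(\beta,\sigma)}$ is $\mathbfit{v}_{\beta,\sigma}$, and one checks directly, by summing a Poisson series, that $\mathbfit{v}_{\beta,\sigma}\wi\Lambda_{\beta,\sigma+1}=\nu_{\beta,1}$, so the compatibility of invariant measures required throughout Section~\ref{sec:appl} holds.

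Next I would supply the hypothesis \eqref{vareps} for the reference semigroup. By Lemma~\ref{alpbet} and \eqref{conEntL} with $\varsigma=1$ we have $\Ent(m_0P^{(\beta,1)}_t\vert\nu_{\beta,1})\le e^{-\alpha(\beta)t}\Ent(m_0\vert\nu_{\beta,1})$ for all $m_0\in\cP((0,+\iy))$ and $t\ge 0$; that is, \eqref{vareps} holds with $\varepsilon(t,E)=e^{-\alpha(\beta)t}E$, which is non-decreasing in $E$ and tends to $0$ as $t\to+\iy$. Applying Theorem~\ref{theo1}, in the deterministic warm-up form recalled at the start of Section~\ref{dwte}, to $\discret{P}^{(\beta,\sigma)}\stackrel{t_0}{\looparrowleft}P^{(\beta,1)}$ then yields, for every $\mathbfit{m}_0\in\cP(\ZZ_+)$ and $t\ge 0$,
\[
\Ent\big(\mathbfit{m}_0\discret{P}^{(\beta,\sigma)}_{t_0+t}\vert\mathbfit{v}_{\beta,\sigma}\big)\ \le\ e^{-\alpha(\beta)t}\,\Ent\big(\mathbfit{m}_0\vert\mathbfit{v}_{\beta,\sigma}\big).
\]

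It remains to reparametrise time. For $t\ge t_0$, replacing $t$ by $t-t_0$ in the last display (legitimate since $t-t_0\ge 0$) and using $e^{\alpha(\beta)t_0}=\big(\tfrac{\sigma+1}{\sigma}\big)^{\alpha(\beta)}$ gives exactly the asserted bound $\Ent(\mathbfit{m}_0\discret{P}^{(\beta,\sigma)}_t\vert\mathbfit{v}_{\beta,\sigma})\le\big(\tfrac{\sigma+1}{\sigma}\big)^{\alpha(\beta)}e^{-\alpha(\beta)t}\Ent(\mathbfit{m}_0\vert\mathbfit{v}_{\beta,\sigma})$. For $0\le t<t_0$ it still holds: the relative entropy is non-increasing along the Markov semigroup $\discret{P}^{(\beta,\sigma)}$ (data-processing inequality), so $\Ent(\mathbfit{m}_0\discret{P}^{(\beta,\sigma)}_t\vert\mathbfit{v}_{\beta,\sigma})\le\Ent(\mathbfit{m}_0\vert\mathbfit{v}_{\beta,\sigma})$, while on this range $\big(\tfrac{\sigma+1}{\sigma}\big)^{\alpha(\beta)}e^{-\alpha(\beta)t}=e^{\alpha(\beta)(t_0-t)}\ge 1$, and combining the two inequalities closes the case. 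The statement $\alpha(\beta)=1$ for $\beta\ge 1$ is just recalled from Remark~\ref{rem:lsclag} (where it is in fact valid for all $\beta\ge 1/2$), so nothing needs to be proved about it.

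There is no real obstacle here once Proposition~\ref{pro4} and Theorem~\ref{theo1} are in hand; the argument is a change of variables plus the elementary monotonicity of relative entropy. The two points I would be careful about are getting the orientation of the symmetric interweaving relation right, so that $P^{(\beta,1)}$ — the semigroup for which a logarithmic Sobolev inequality with a known constant $\alpha(\beta)$ is available — plays the role of the reference semigroup $\wi P$ in Theorem~\ref{theo1}, and covering the short initial interval $0\le t<t_0$, where one falls back on the data-processing inequality rather than on \eqref{espE}.
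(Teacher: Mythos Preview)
Your proof is correct and follows the same approach as the paper: specialise Proposition~\ref{pro4} to $\varsigma=1$, read the symmetric interweaving in the direction $\discret{P}^{(\beta,\sigma)}\stackrel{t_0}{\looparrowleft}P^{(\beta,1)}$ (the ``three last lines of Figure~\ref{fig4}''), feed the logarithmic Sobolev bound \eqref{conEntL} into Theorem~\ref{theo1}, and then rewrite $e^{-\alpha(\beta)(t-t_0)_+}$ as $\big(\tfrac{\sigma+1}{\sigma}\big)^{\alpha(\beta)}e^{-\alpha(\beta)t}$ for $t\ge t_0$ while invoking monotonicity of relative entropy for $t<t_0$. The paper states only the first of these steps explicitly; your write-up correctly supplies the time reparametrisation and the short-time case, which the paper leaves implicit (the same manoeuvre appears later in \eqref{Villain}).
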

\par
In particular, for $\beta \geq 1/2$ and
up to waiting a warming-up time $\ln(1+\frac{1}{\sigma})$, before which Corollary \eqref{alterna} provides no information and is less good than \eqref{dconEntL},
 we get after this period an exponential rate of convergence equal to $1$ (the best possible asymptotical one would be $2$, i.e.\ twice the spectral gap of $\mathds{L}_{\beta,\sigma}$).
Corollary \ref{alterna} is also relevant for small $\beta>0$, since one cannot hope for an estimate so  simple via \eqref{Hardy}.
\par\sm
Applying the bounds from Theorem \ref{theo1}
to  the three first lines of Figure~\ref{fig4}, we get for any initial probability $\mathbfit{m}_0$  on  $\ZZ_+$, any $\beta,\varsigma,\sigma>0$ and any $t\geq 0$,
\bq
\Ent(\mathbfit{m}_0\discret{P}_t^{(\beta,\varsigma)}\vert\mathbfit{v}_{\beta,\varsigma})&\leq & \exp(-{\mathbfit{\alpha}}_m(\beta,\varsigma\sigma)[t-\ln(1+\frac{1}{\varsigma\sigma})]_+)\Ent(\mathbfit{m}_0\vert\mathbfit{v}_{\beta,\varsigma})
 \eq
Letting $\sigma>0$ go to infinity and recalling that $\alpha(\beta)$ is optimal in \eqref{conEntL}, we deduce that
\bqn{suggest}
\fo \beta>0,\qquad \overline\alpha(\beta)\ \df\ \limsup_{\sigma\ri+\iy}{\mathbfit{\alpha}}_m(\beta,\sigma)&\leq & \alpha(\beta)\eqn
In particular
$  \overline\alpha(\beta)$ is going to zero as $\beta$ goes to $0_+$ (in fact we believe that $ \overline\alpha(\beta)=\alpha(\beta)$, as suggested by the remark about approximations at the end of this subsection).\par\me
Similar relations between the classical and discrete Laguerre semigroups are equally valid concerning hyperboundedness via Theorem \ref{theo2}.
Indeed, the logarithmic Sobolev inequalities imply that for any $\beta,\varsigma>0$, we have
\bq
\fo t\geq 0,\qquad
\vvv  P_t^{(\beta,\varsigma)}\vvv_{\esL^2(\nu_{\beta,\varsigma})\ri\esL^{p(\alpha(\beta) t)}(\nu_{\beta,\varsigma})}&\leq &1\eq
where $\alpha(\beta)$ is defined in Lemma \ref{alpbet} and
\bq
\fo t\geq 0,\qquad p(\alpha(\beta) t)&\df&1+\exp(\alpha(\beta) t) \eq
and for any $\beta,\sigma>0$
\bq
\fo t\geq 0,\qquad
\vvv  \discret{P}_t^{(\beta,\sigma)}\vvv_{\esL^2(\mathbfit{v}_{\beta,\sigma})\ri\esL^{p({\mathbfit{\alpha}}(\beta,\sigma) t)}(\mathbfit{v}_{\beta,\sigma})}&\leq &1\eq
where ${\mathbfit{\alpha}}(\beta,\sigma)$ is defined in \eqref{wifabs} and
\bq
\fo t\geq 0,\qquad p({\mathbfit{\alpha}}(\beta,\sigma) t)&\df&1+\exp({\mathbfit{\alpha}}(\beta,\sigma) t) \eq
\par
But due to the difficulty in estimating ${\mathbfit{\alpha}}(\beta,\sigma)$, it is preferable to use Theorem \ref{theo2}
to deduce that
\bq
\fo t\geq 0,\qquad
\vvv  \discret{P}_{t+\ln(1+\frac{1}{\sigma})}^{(\beta,\sigma)}\vvv_{\esL^2(\mathbfit{v}_{\beta,\sigma})\ri\esL^{p(\alpha(\beta) t)}(\mathbfit{v}_{\beta,\sigma})}&\leq &1\eq
\par\me
To end this subsection, let us mention
two other applications of the  \cmirs\    of Proposition \ref{pro4}.
\par\sm
$\bullet$ \textbf{Approximations}: For any $\beta,\varsigma>0$, let $X^{(\beta,\varsigma)}\df (X^{(\beta,\varsigma)}_t)_{t\geq 0}$ (respectively $\wi X^{(\beta,\varsigma)}\df (\wi X^{(\beta,\varsigma)}_t)_{t\geq 0}$) be a Markov process associated to $P^{(\beta,\varsigma)}$ (resp.\ $\discret{P}^{(\beta,\varsigma)}$).
As seen in \cite{self1},   for large $\sigma>0$ the birth and death process $(\discret{X}_t^{(\beta,\sigma\varsigma)})_{t\geq 0}$ provides an isospectral approximation of $(X_t^{(\beta,\varsigma)})_{t\geq 0}$. This is related to the fact that $\bar\alpha(\beta)$ should be close to $\alpha(\beta)$, as suggested by \eqref{suggest}.
\par\sm
$\bullet$ \textbf{Simulations}: For  $\sigma>0,\,x\in \RR_+$ and $t\geq 0$, the random variable $Y\df X^{(\beta,\varsigma)}_{\ln(1+\frac{1}{\varsigma\sigma})+t}$ can be simulated by
first sampling $\wi x$ under the probability $\Lambda_\sigma(x,d\wi x)$, next by simulating $\wi X_t^{(\beta,\varsigma\sigma)}$ starting with $\wi X_0^{(\beta,\varsigma\sigma)}=\wi x$ (comprehensively, this amounts to simulate $\wi X_t^{(\beta,\varsigma\sigma)}$ with the initial distribution $\Lambda_\sigma(x,\cdot)$)
and finally by sampling
$Y$ under the probability $\wi \Lambda_{\varsigma^{-1}+\sigma}( \wi X_t^{(\beta,\varsigma\sigma)},\cdot)$.
\par\sm

\subsection{Degenerate hypoelliptic Ornstein-Uhlenbeck processes}
We now describe a refined version of a interweaving relation  between degenerate and non-degenerate hypoelliptic Ornstein-Uhlenbeck semigroups on $\R^d, d\geq 1$, that was identified in  \cite{PA}. In that paper,  the authors exploit the \cmirs\ to obtain the hypocoercive estimate with explicit constants for the convergence to equilibrium in the weighted Hilbert space of the degenerate  hypoelliptic Ornstein-Uhlenbeck semigroups which are non-normal. Therein, we provide further applications of these \cmirs\ to these degenerate semigroups  including   entropy  and hypercontractivity estimates and  the cut-off phenomena.
To define these semigroups, we let $B$ and $\mathrm{\Gamma}$ be  $d\times d$-matrices with $\sigma(B) \subseteq \{z \in \C; \: \Re(z) > 0\}$ and $\mathrm{\Gamma}$ being  positive semi-definite  such that $\det \mathrm{\Gamma}_t > 0$ for all $t > 0$ where
\begin{equation*}
\mathrm{\Gamma}_t = \int_0^t e^{-sB}\mathrm{\Gamma} e^{-sB^*}ds,
\end{equation*}
and the matrix $B^*$ stands for the adjoint of $B$.
 In particular, this holds when $\mathrm{\Gamma}$ is invertible, which we call the non-degenerate case, although it can happen that $\det \mathrm{\Gamma}_t > 0$, for all $t > 0$, with $\det \mathrm{\Gamma} = 0$, which we call the degenerate case. An equivalent condition to $\det \mathrm{\Gamma}_t > 0$ for all $t > 0$  is that $\ker{\mathrm{\Gamma}}$, the kernel of $\mathrm{\Gamma}$, does not contain any invariant subspace of $B^*$. Under these assumptions on $(\mathrm{\Gamma},B)$, the hypoelliptic Ornstein-Uhlenbeck semigroup $P$ admits an unique invariant measure which is the following gaussian distribution
\begin{equation*}
\rho_{\mathrm{\Gamma}_\infty}(d\mathbf{x}) = \frac{e^{-\inn{\mathrm{\Gamma}_\infty^{-1} \textbf{x}}{\textbf{x}}/2}}{\sqrt{(2\pi)^{d}\det \mathrm{\Gamma}_\infty}}d\mathbf{x}, \: \textbf{x} \in \R^d,
\end{equation*}
with $\mathrm{\Gamma}_\infty = \int_0^\infty e^{-tB}\mathrm{\Gamma} e^{-tB^*}ds$ and $\inn{\cdot}{\cdot}$ denotes the Euclidean inner product in $\R^d$.
 $P$ extends to a contraction semigroup on the weighted Hilbert space $\esL^2(\rho_\infty)$.
We also recall that the generator of the Ornstein-Uhlenbeck semigroup $P = (e^{-t\A})_{t \geq 0}$ acts on suitable functions $f$ via
\begin{equation*}
\A [f](\mathbf{x}) = \frac{1}{2}\sum_{i,j=1}^d \gamma_{ij} \partial_i\partial_j f(\textbf{x}) - \sum_{i,j=1}^d b_{ij}x_j \partial_i f(\textbf{x}) =  \frac{1}{2} \trace(\mathrm{\Gamma} \nabla^2)f(\textbf{x}) - \inn{B\textbf{x}}{\nabla}f(\textbf{x}), \quad \textbf{x} \in \R^d,
\end{equation*}
and the condition $\det \mathrm{\Gamma}_t > 0$, for all $t > 0$, is equivalent to the hypoellipticity of $\frac{\partial}{\partial t} + \A$ in the $d+1$ variables $(t,x_1,\ldots,x_d$), hence the terminology. In Metafune, Pallara and Priola \cite[Theorem 3.1]{metafune:2002} (see also Bogatchev \cite{bogachev:2018} and Aleman and Viola \cite{MR3404015})
it was shown that the spectrum of $\A$ in $\esL^2(\rho_{\mathrm{\Gamma}_\infty})$ is entirely determined by the one of the matrix $B$, specifically that, writing $\N = \{0,1,2,\ldots\}$, $\sigma(\A) = \left\lbrace \sum_{i=1}^r k_i b_i; \ k_i \in \N \right\rbrace$, where $b_1,\ldots,b_r$ are the distinct eigenvalues of $B$. Hence, in particular, the spectral gap of $\A$ is ${\lambda}_1=b_\wedge$  as the smallest eigenvalue of $\frac{1}{2}(B+B^*)$. Next, we denote by $\kappa(V)$  the condition number of any invertible matrix $V$, and note that if $V$ is positive-definite then $\kappa(V) = v_\vee/v_\wedge $, where $v_\vee, v_\wedge  > 0$ are the largest and smallest eigenvalues of $V$, respectively.  In the following we write, for a vector $\bm{\alpha} \in \R^d$, $D_{\bm{\alpha}}$ for the diagonal matrix with diagonal entries given by $\bm{\alpha}$. 

\begin{pro}\label{Prop:OU}
Let $P$ be a (possibly) degenerate hypoelliptic Ornstein-Uhlenbeck semigroup associated to $(\mathrm{\Gamma},B)$, that is  $\ker{\mathrm{\Gamma}}$ does not contain any invariant subspace of $B^*$. Suppose that $B$ is diagonalizable with similarity matrix $V,$ and that $\sigma(B) \subseteq (0,\infty)$, that is $VBV^{-1} = D_{\bm{b}}$, where $\bm{b} \in \R^d$ is the vector of eigenvalues of $B$ with $b_i > 0$  for all $i \in \{1,\ldots,d\}$ and we set
\begin{equation*}
\alpha_i = \gamma_{\wedge,\infty} e^{\frac{b_i}{b_\wedge} \log \kappa\left(V\mathrm{\Gamma}_{\infty}V^*\right)} \quad \text{and} \quad \delta_i = {\gamma}_{\infty}
\end{equation*}
where $\gamma_{\wedge,\infty}$ (resp.~$b_\wedge$) is the smallest eigenvalues of $V\mathrm{\Gamma}_{\infty}V^*$ (resp.~$B$).
Then, there exists a non-degenerate hypoelliptic  Ornstein-Uhlenbeck semigroup $\tilde{P}$ associated to $(D_{\bm{\alpha}+2\bm{b}},D_{\bm{b}})$, self-adjoint on $\esL^2(\tilde{\rho}_{D_{\bm{\alpha}}})$, such that
\begin{equation*}
   P \stackrel{\mathbf{t}}{\leftrightsquigarrow} \tilde{P} 
\end{equation*}
where $\mathbf{t} = \frac{1}{b_\wedge} \log \kappa(V \mathrm{\Gamma}_\infty V^*)$,  $\Lambda: \esL^2(\rho_{D_{\bm{\alpha}}}) \rightarrow \esL^2(\rho_{\mathrm{\Gamma}_\infty})$ and $\tilde{\Lambda} : \esL^2(\rho_{\mathrm{\Gamma}_\infty}) \rightarrow \esL^2(\rho_{D_{\bm{\alpha}}})$ are bounded and one-to-one Markov operators defined respectively by
\begin{equation}
\label{eq:Lambda-adjoint}
\Lambda f( \mathbf  x) =  f \ast \rho_{D^{(\bm{\alpha})}}(V\mathbf{x})  \textrm{ and } \tilde\Lambda f(\mathbf{x}) = \frac{1}{\rho_{D^{(\bm{\alpha})}}(\mathbf{x})} ((f_V \ast \rho_{D^{(\bm{\delta})}})   \rho_{D^{(\bm{\delta})}}) \ast \rho_{D_{{\bm{\alpha}}-\bm{\delta}}}(\mathbf{x}), \quad \mathbf{x} \in\R^d,
\end{equation}
where  $\ast$ denotes the additive convolution operator,  for $\bm{a} \in \R^d$, $D^{(\bm{a})}=D_{\bm{a}}-V \mathrm{\Gamma} V^*$ and $f_V(\mathbf  x)=f(V^{-1}\mathbf  x)$.
\end{pro}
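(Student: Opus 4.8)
The plan is to realise this symmetric interweaving in three movements — reduce to a diagonal drift, produce the ``forward'' intertwining by Gaussian convolution, and build the ``backward'' kernel by hand while reading off the warm-up time; the bulk of the bookkeeping is the computation carried out in \cite{PA}, so what follows is only an outline. \emph{Reduction to a diagonal drift.} Since $B$ is diagonalisable with $VBV^{-1}=D_{\bm b}$, I would conjugate by the composition operator $M\colon f\mapsto f(V^{-1}\cdot)$: as the push-forward of a centered Gaussian under an invertible linear map is again a centered Gaussian, $M$ is an isometric isomorphism from $\esL^2(\rho_{\mathrm{\Gamma}_\infty})$ onto $\esL^2(\rho_{V\mathrm{\Gamma}_\infty V^*})$, and it turns the Ornstein--Uhlenbeck semigroup associated to $(\mathrm{\Gamma},B)$ into the one associated to $(V\mathrm{\Gamma}V^*,D_{\bm b})$. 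By \refthm{thm:simil}(1) applied with this bounded, boundedly invertible similarity (and since $\tilde P$ itself is unchanged, the symmetric relation is preserved), it is enough to treat the case $B=D_{\bm b}$; the matrices $V,V^{-1}$ occurring in \eqref{eq:Lambda-adjoint} are precisely this conjugation, so the general kernels will then be read off automatically. Henceforth $B=D_{\bm b}$, $\mathrm{\Gamma}$ denotes $V\mathrm{\Gamma}V^*$, and $D^{(\bm a)}=D_{\bm a}-\mathrm{\Gamma}$.

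\emph{Forward intertwining.} I would use that an Ornstein--Uhlenbeck semigroup acts on bounded measurable functions by $P_t f(\mathbf x)=(f\ast\rho_{\mathrm{\Gamma}_t})(e^{-tB}\mathbf x)$ with $\mathrm{\Gamma}_t=\int_0^t e^{-sB}\mathrm{\Gamma}e^{-sB^*}ds$, i.e.\ a convolution with a centered Gaussian followed by the deterministic dilation $e^{-tB}$. For $\Lambda f=f\ast\rho_{D^{(\bm\alpha)}}$, combining $\rho_{C_1}\ast\rho_{C_2}=\rho_{C_1+C_2}$ with the transformation rule of Gaussians under linear maps, the relation $P_t\Lambda=\Lambda\tilde P_t$ for all $t\geq0$ collapses to a single Lyapunov-type identity between $\mathrm{\Gamma}$, the covariance $D^{(\bm\alpha)}$ of the smoothing Gaussian, and the diffusion matrix of $\tilde P$; that identity forces $\tilde P$ to be the non-degenerate Ornstein--Uhlenbeck semigroup associated with $(D_{\bm\alpha+2\bm b},D_{\bm b})$. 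The explicit value $\alpha_i=\gamma_{\wedge,\infty}\,e^{(b_i/b_\wedge)\log\kappa(V\mathrm{\Gamma}_\infty V^*)}$ is exactly the choice that makes, at the same time, $D^{(\bm\alpha)}=D_{\bm\alpha}-\mathrm{\Gamma}$ positive semi-definite — so that $\rho_{D^{(\bm\alpha)}}$ is a genuine (possibly degenerate) probability measure and $\Lambda$ is a Markov operator — and $\tilde P$ self-adjoint on $\esL^2(\tilde\rho_{D_{\bm\alpha}})$. Verifying these algebraic facts, together with the boundedness of $\Lambda$ between the stated weighted spaces, is done in \cite{PA}.

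\emph{Backward intertwining, warm-up time, and conclusion.} Here duality is not available, because $P$ is non-self-adjoint — the very point of the degenerate case — so $\tilde\Lambda$ must be produced directly, in the Mehler-type form of the second formula of \eqref{eq:Lambda-adjoint} (multiply by a Gaussian, convolve with Gaussians, divide by a Gaussian); one checks that it is Markov ($\tilde\Lambda\un=\un$ and positivity), bounded on the relevant weighted spaces, one-to-one (its Fourier symbol is a product of non-vanishing Gaussians, the degenerate directions having been removed in the reduction), and that $\Gate{\tilde P}{\tilde\Lambda}{P}$. Since $P$ and $\tilde P$ are isospectral, with common spectrum $\{\sum_i k_i b_i\}$ (cf.\ \cite{metafune:2002}), and $B$ — hence $\A$ — is diagonalisable, I would compute $\Lambda\tilde\Lambda$ by letting it act on the eigenbasis of $\A$ (generalised Hermite functions, on which $\Lambda$ and $\tilde\Lambda$ act by explicit scalars) and matching the resulting multipliers with $e^{-t\lambda}$ over the spectrum; this yields $\Lambda\tilde\Lambda=P_{\mathbf t}$ with $\mathbf t=\frac{1}{b_\wedge}\log\kappa(V\mathrm{\Gamma}_\infty V^*)$. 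Finally, as $\Lambda$ is one-to-one on a dense subset, \refprop{symmetric} (in the $\esL^2$ variant alluded to in Remark \ref{1errems}(a)) promotes $P\stackrel{\mathbf t}{\looparrowleft}\tilde P$ to the symmetric relation, i.e.\ also gives $\tilde\Lambda\Lambda=\tilde P_{\mathbf t}$; undoing the reduction through \refthm{thm:simil}(1) returns the statement with the kernels \eqref{eq:Lambda-adjoint}.

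\emph{Expected main obstacle.} The delicate step is this last one. On the one hand $\tilde\Lambda$ cannot be recovered as an adjoint and must be guessed; on the other — and more essentially — the warm-up time has to be the \emph{sharp} one. There is a real tension here: enlarging $\bm\alpha$ makes $D^{(\bm\alpha)}$ ``more positive'' but lengthens the warm-up, and the stated $\bm\alpha$, which is governed by the condition number of $V\mathrm{\Gamma}_\infty V^*$, is exactly the optimal trade-off; extracting it is what upgrades a soft existence statement to the quantitative one above and is the meaning of ``a refined version'' of the relation of \cite{PA}.
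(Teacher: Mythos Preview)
Your outline is correct and follows essentially the same route as the paper: reduce to the diagonal-drift case via the unitary change of coordinates $f\mapsto f(V^{-1}\cdot)$ together with \refthm{thm:simil}, import the interweaving relation (with its explicit kernels and warm-up time $\mathbf t$) from \cite[Proposition~4.2]{PA}, and then use the one-to-one property of the intertwining kernels to upgrade to the symmetric relation via \refprop{symmetric}/\refthm{thm:ref-cmir}. The paper's own proof is terser --- it simply cites \cite{PA} wholesale for the diagonal case rather than sketching the Gaussian-convolution and eigenbasis computations you describe --- but the architecture is the same.
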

\proof
First note that the change of coordinates map ${\Phi_V} f(\mathbf{x}) = f(V^{-1}\mathbf{x})$ is a unitary operator from $\esL^2(\rho_{\mathrm{\Gamma}_\infty})$ to $\esL^2(\rho_{\mathrm{\Gamma}_\infty}^{\Phi_V})$, where $\rho_\infty^{\Phi_V}$ denotes the image density of $\rho_\infty$ under ${\Phi_V}$, i.e.~for $\mathbf{x} \in \R^d$, $\rho_\infty^{\Phi_V}(\mathbf{x}) = \frac{1}{|\det V|}\rho_\infty(V^{-1}\mathbf{x})$.
Next, since $B$ is diagonalizable with similarity matrix $V$ we have that $VBV^{-1} = D_{\mathbf{b}}$, where $\mathbf{b} \in \R^d$ is the vector of eigenvalues of $B$ with $b_i > 0$ for all $i=1,\ldots,d$. Under this change of coordinates, $(\mathrm{\Gamma},B)$ gets mapped to $(V\mathrm{\Gamma}V^*,D_{\mathbf{b}})$ and a simple calculation shows that $\mathrm{\Gamma}_\infty$ then gets mapped to $V\mathrm{\Gamma}_\infty V^*$.  Hence if we prove the desired result for the Ornstein-Uhlenbeck semigroup $\overline{P}$ associated to $(V\mathrm{\Gamma}V^*,D_{\mathbf{b}})$ then, since $P_t = {\Phi_V}^{-1} \overline{P}_t {\Phi_V}$ we get, by Theorem \ref{thm:transp} and the unitary property of $\Phi_V$, that the claims hold for the Ornstein-Uhlenbeck semigroup $P$ associated to $(\mathrm{\Gamma},B)$. From \cite[Proposition 4.2]{PA}, we know that $\cont{\wi P} \stackrel{\mathbf{t}}{\looparrowleft} \overline{P}$ where $\cont{\wi P}$ is the  Ornstein-Uhlenbeck semigroup associated to $(D_{\bm{\alpha}+2\mathbf{b}},D_{\mathbf{b}})$ which is self-adjoint on $\esL^2(\rho_{D_{\bm{\alpha}}})$, hence non-degenerate and the operators $\Lambda$ and  $\tilde\Lambda$ are quasi-affinities on the appropriate weighted  $\esL^2$ spaces. In particular, they are both one-to-one and hence the \cmir\ is symmetric by Theorem \ref{thm:ref-cmir} which completes the proof with another application of Theorem \ref{thm:transp}.
\wwtbp\par

We proceed by providing some by-products of this \cmir. First, we recall that in \cite[Theorem 3.1]{PA}, the following hypocoercive estimate was given, for any $f \in \esL^2(\rho_{\mathrm{\Gamma}_\infty})$,
\begin{equation*}
\fo t\geq 0,\qquad \textrm{Var}_{\rho_{\mathrm{\Gamma}_\infty}}\left(P_t f\right) \leq \kappa(V \mathrm{\Gamma}_\infty V^*) \exp(-2b_\wedge t) \textrm{Var}_{\rho_{\mathrm{\Gamma}_\infty}}\left(f\right)
\end{equation*}
where $\textrm{Var}_{\rho_{\mathrm{\Gamma}_\infty}}\left(f\right)= \int_{\R^d} (f(\textbf{x})-\rho_{\mathrm{\Gamma}_\infty} f)^2\rho_{\mathrm{\Gamma}_\infty}(\textbf{x})d\textbf{x}.$
We carry on by recalling that, in the one-dimensional case $d=1$,  it is well known that the self-adjoint Ornstein-Uhlenbeck semigroup $\tilde{P}^{(i)}, i=1,\ldots, d,$ associated to $(\alpha_i, b_i)$ and whose generator is given by
\begin{equation*}
\tilde \A_{(i)} [f](x) = -\frac{(\alpha_i+2b_i)^2}{2} f''(x) - b_i x f'(x), \quad x \in \R,
\end{equation*}
satisfies the so-called curvature dimension $CD(b_i,\infty)$ which is equivalent to the strict log-Sobolev inequality with constant $b_i$,  see \cite[Section 2.7.1]{bakry:2014}. Then observing that $\tilde{P}$, defined in Proposition \ref{Prop:OU}, is the product of the $\tilde{P}^{(i)}$'s, that is $\tilde{P}    = \bigotimes_{i=1}^d \tilde{P}^{(i)}$,  we get from the stability of the log-Sobolev inequality under products, see  \cite[Proposition 5.2.7]{bakry:2014}, that $\tilde{P}$ satisfies the strict log-Sobolev inequality with constant $b_\wedge$ the minimum of the log-Sobolev constants.   This yields   the following estimate for the convergence in entropy
 \bqn{EntOU}
 \fo t\geq 0,\qquad \Ent(m_0\tilde P_t\vert\tilde{\rho}_\infty)&\leq & \exp(-b_\wedge t)\Ent(m_0\vert \tilde{\rho}_\infty)\eqn
 valid for any initial distribution $m_0$ on $\RR^d$.
Moreover, resorting again to the famous equivalence between the log-Sobolev inequality  and the hypercontractivity property due to Gross \cite{MR0420249}, we get, writing
\bq
\fo t\geq 0,\qquad \tilde{p}(t)&\df&1+\exp(b_\wedge t) \eq
\par
that
\bqn{hypOU}
\fo t\geq 0,\qquad
\vvv  \tilde{P}_{t}\vvv_{\esL^2(\rho_{\infty})\ri\esL^{\tilde{p} (t)}(\rho_{\infty})}&\leq &1\eqn
\par\me
We emphasize that the extension of such estimates to  degenerate hypoelliptic Ornstein-Uhlenbeck semigroup $P$  have met with resistance so far  due to the fact that  $P$ is non-self-adjoint (even non-normal) on $\esL^2(\rho_\infty)$, see \cite[Lemma 3.3]{ottobre:2015}. However, the \cmir\ described in Proposition \ref{Prop:OU} combined with the theorems \ref{theo1} and \ref{theo2}  enable us to obtain the following.

\begin{cor}\label{cor:OU}
Let $P$ be the degenerate hypoelliptic Ornstein-Uhlenbeck semigroup  as defined in Proposition \ref{Prop:OU}. Then,
for any initial distribution $m_0$ on $\RR^d$,
 we have
 \bqn{EntOU}
 \fo t\geq 0,\qquad \Ent(m_0 P_{t}\vert\rho_{\mathrm{\Gamma}_\infty})&\leq & \kappa(V \mathrm{\Gamma}_\infty V^*) \exp(-b_\wedge t)\Ent(m_0\vert \rho_{\mathrm{\Gamma}_\infty})\eqn
and
\bqn{hypOU}
\fo t\geq 0,\qquad
\vvv  {P}_{t+\mathbf{t}}\vvv_{\esL^2(\rho_{\mathrm{\Gamma}_\infty})\ri\esL^{\tilde{p} (t)}(\rho_{\mathrm{\Gamma}_\infty})}&\leq &1\eqn
\par\me
\end{cor}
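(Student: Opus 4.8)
The plan is to feed the symmetric interweaving relation of Proposition \ref{Prop:OU} into the transfer principles of Theorem \ref{theo1} and Theorem \ref{theo2}, using as input the log-Sobolev and hypercontractivity estimates already recorded above for the auxiliary self-adjoint semigroup $\tilde P$, and then to turn the deterministic warm-up shift $\mathbf t$ into the multiplicative constant $\kappa(V\mathrm{\Gamma}_\infty V^*)$. Recall from Proposition \ref{Prop:OU} that $P\stackrel{\mathbf t}{\leftrightsquigarrow}\tilde P$ with the deterministic warm-up time $\mathbf t=\frac1{b_\wedge}\log\kappa(V\mathrm{\Gamma}_\infty V^*)\ge 0$, and in particular $P\stackrel{\mathbf t}{\looparrowleft}\tilde P$; the invariant probability measures are $\rho_{\mathrm{\Gamma}_\infty}$ for $P$ and $\tilde\rho_\infty\df\tilde\rho_{D_{\bm\alpha}}$ for $\tilde P$, both being the respective unique (Gaussian) invariant measures, so the compatibility $\tilde\rho_\infty=\rho_{\mathrm{\Gamma}_\infty}\Lambda$ and $\rho_{\mathrm{\Gamma}_\infty}=\tilde\rho_\infty\tilde\Lambda$ required in Section \ref{sec:appl} holds automatically. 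Moreover $\tilde P$ satisfies the strict log-Sobolev inequality with constant $b_\wedge$, whence $\Ent(m_0\tilde P_t\vert\tilde\rho_\infty)\le e^{-b_\wedge t}\Ent(m_0\vert\tilde\rho_\infty)$ and $\vvv\tilde P_t\vvv_{\esL^2(\tilde\rho_\infty)\to\esL^{\tilde{p}(t)}(\tilde\rho_\infty)}\le 1$ with $\tilde{p}(t)=1+e^{b_\wedge t}$.

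For the entropy estimate, the above bound for $\tilde P$ is precisely \eqref{vareps} with $\varepsilon(t,E)=e^{-b_\wedge t}E$, so Theorem \ref{theo1}, in the deterministic warm-up form recalled at the beginning of Section \ref{dwte}, gives
\[
\fo m_0\in\cP(\RR^d),\ \fo t\ge 0,\qquad \Ent(m_0 P_{t+\mathbf t}\vert\rho_{\mathrm{\Gamma}_\infty})\ \le\ e^{-b_\wedge t}\,\Ent(m_0\vert\rho_{\mathrm{\Gamma}_\infty}).
\]
Re-indexing by $s=t+\mathbf t$ yields, for $s\ge\mathbf t$, $\Ent(m_0 P_s\vert\rho_{\mathrm{\Gamma}_\infty})\le e^{b_\wedge\mathbf t}e^{-b_\wedge s}\Ent(m_0\vert\rho_{\mathrm{\Gamma}_\infty})$, and $e^{b_\wedge\mathbf t}=\kappa(V\mathrm{\Gamma}_\infty V^*)$ by the definition of $\mathbf t$; this is the asserted inequality for $s\ge\mathbf t$. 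For $0\le s<\mathbf t$ it holds trivially, since relative entropy is non-increasing along a Markov semigroup leaving $\rho_{\mathrm{\Gamma}_\infty}$ invariant, so $\Ent(m_0 P_s\vert\rho_{\mathrm{\Gamma}_\infty})\le\Ent(m_0\vert\rho_{\mathrm{\Gamma}_\infty})$, while $\kappa(V\mathrm{\Gamma}_\infty V^*)e^{-b_\wedge s}\ge\kappa(V\mathrm{\Gamma}_\infty V^*)e^{-b_\wedge\mathbf t}=1$ because $\kappa(V\mathrm{\Gamma}_\infty V^*)\ge 1$. The hypercontractivity statement is immediate: the bound for $\tilde P$ above is \eqref{hypercon} with $\wi\alpha=b_\wedge$, so Theorem \ref{theo2}, applied with the deterministic warm-up time $\mathbf t$, yields $\vvv P_{t+\mathbf t}\vvv_{\esL^2(\rho_{\mathrm{\Gamma}_\infty})\to\esL^{\tilde{p}(t)}(\rho_{\mathrm{\Gamma}_\infty})}\le 1$ for all $t\ge 0$.

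I expect no real obstacle in the corollary itself: the substance lies upstream, in Proposition \ref{Prop:OU} (and \cite[Proposition 4.2]{PA})---the construction of the Markov kernels $\Lambda,\tilde\Lambda$ of \eqref{eq:Lambda-adjoint}, their injectivity, and the identification $\Lambda\tilde\Lambda=P_{\mathbf t}$ with the sharp delay $\mathbf t=\frac1{b_\wedge}\log\kappa(V\mathrm{\Gamma}_\infty V^*)$, which is where the matrix analysis on $V\mathrm{\Gamma}_\infty V^*$ is used. Granting that, the only points requiring care here are the bookkeeping of which invariant measure sits on each side of the interweaving, the elementary re-indexing together with the entropy contraction property to cover the range $s<\mathbf t$, and the arithmetic identity $e^{b_\wedge\mathbf t}=\kappa(V\mathrm{\Gamma}_\infty V^*)$.
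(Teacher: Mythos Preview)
Your proof is correct and follows exactly the approach indicated by the paper, which simply states that the corollary follows from the interweaving relation of Proposition~\ref{Prop:OU} combined with Theorems~\ref{theo1} and~\ref{theo2}. You in fact supply more detail than the paper does, in particular the re-indexing $s=t+\mathbf t$ together with the identity $e^{b_\wedge\mathbf t}=\kappa(V\mathrm{\Gamma}_\infty V^*)$ and the use of entropy monotonicity to cover the range $s<\mathbf t$.
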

 We mention that Arnold and Erb \cite{arnold:2014} have  obtained hypocoercivity estimate of the form \eqref{EntOU}, under our assumptions, with exponential rate given by the spectral gap $b_\wedge$ and that Arnold et al.~\cite{arnold:2018} and Monmarch\'e \cite{monmarche:2019} have proved hypocoercivity with exponential rate $b_\wedge$ without assuming that $B$ is diagonalizable. However, in contrast to these existing results, we are able to explicitly identify the constant in front of the exponential, i.e.~$\kappa(V\mathrm{\Gamma}_\infty V^*)$, in terms of the initial data $\mathrm{\Gamma}$ and $B$. Note that, in particular, if $B$ is symmetric then $V$ is unitary and $\kappa(V\mathrm{\Gamma}_\infty V^*) = \kappa(\mathrm{\Gamma}_\infty)$.  However we are not aware of  results regarding the hypercontractivity estimates.

We now turn  to another application of interweaving which allows to identify the cut-off phenomena for degenerate hypoelliptic Ornstein-Uhlenbeck semigroups.
To this end, let $\alpha\df (\alpha_1, \alpha_2, ..., \alpha_d)$ and $b\df(b_1,b_2, ..., b_d)$ be vectors from $(0,+\iy)^d$.
Denote $b_\wedge\df \min (b_l, \, l\in \lin d\rin)$ and
for any $n\in\NN$,  $\bm{\alpha}^{(n)}\df(\alpha,\ldots,\alpha) \in \RR^{dn}$ and $\bm{b}^{(n)}\df(b,\ldots,b)\in \RR^{dn}$.
 Consider the family of semigroups $(\tilde P^{(n)})_{n\in \ZZ_+}$  associated for each $n \in \ZZ_+$ to $(D_{\bm{\alpha}^{(n)}+2\bm{b}^{(n)}},D_{\bm{b}^{(n)}})$.
 Lachaud \cite{Lachaud}
 has shown that this family
has a cut-off at the time
\bqn{eq:cut-off-t}
  t^{(n)} &\df& \frac{\log n }{2 b_\wedge}
\eqn
(more precisely,  Lachaud \cite{Lachaud} has only considered the case $d=1$, but her arguments extend to any $d\in\NN$, see also Barrera, Lachaud and  Ycart \cite{MR2260742}).
\par We have the following generalization.
\begin{cor}\label{cor:OU}
For any $n \in \NN$, let $P^{(n)}$ be the degenerate hypoelliptic Ornstein-Uhlenbeck semigroup in $\RR^{dn}$  as defined in Proposition \ref{Prop:OU} and associated to some $(\mathrm{\Gamma}^{(n)},B^{(n)})$, with $\kappa(\mathrm{\Gamma}^{(n)}_\infty)$
satisfying
\bq
\lim_{n\ri\iy} \frac{\log(n)}{\kappa(\mathrm{\Gamma}^{(n)}_\infty)}&=&+\iy\eq
and $\bm{\alpha}^{(n)}$ as above $\bm{b}^{(n)}$ as above. Then,  the family $(P^{(n)})_{n\in \NN}$ has a cut-off  at the times $(t^{(n)})_{n\in\NN}$ defined in \eqref{eq:cut-off-t}.
\end{cor}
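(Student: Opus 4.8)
The plan is to obtain the cut-off for $(P^{(n)})_{n\in\NN}$ by transferring it, through the symmetric interweaving relation of \refprop{Prop:OU} and the cut-off transfer principle \refthm{theo3}, from the non-degenerate self-adjoint reference family whose cut-off was established by Lachaud.

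First I would fix $n\in\NN$ and apply \refprop{Prop:OU}: since $B^{(n)}$ is diagonalizable with spectrum in $(0,+\iy)$ and vector of eigenvalues $\bm{b}^{(n)}$, and $\ker\mathrm{\Gamma}^{(n)}$ contains no invariant subspace of $(B^{(n)})^*$, it furnishes a non-degenerate, self-adjoint hypoelliptic Ornstein--Uhlenbeck semigroup $\tilde P^{(n)}$ associated to $(D_{\bm{\alpha}^{(n)}+2\bm{b}^{(n)}},D_{\bm{b}^{(n)}})$ together with the symmetric interweaving relation $P^{(n)}\stackrel{\boldsymbol{t}^{(n)}_0}{\leftrightsquigarrow}\tilde P^{(n)}$, where
\[
\boldsymbol{t}^{(n)}_0\ =\ \frac{1}{b_\wedge}\,\log\kappa\!\big(V^{(n)}\mathrm{\Gamma}^{(n)}_\infty (V^{(n)})^*\big).
\]
A deterministic time being trivially infinitely divisible (it is the position at time $1$ of a pure-drift subordinator), this is an IRID, so \refthm{theo3} applies to the families $(P^{(n)})_{n\in\NN}$ and $(\tilde P^{(n)})_{n\in\NN}$ once its two hypotheses are checked.

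For the hypothesis bearing on the reference family I would argue that $\tilde P^{(n)}$ is the $n$-fold tensor product of the one-dimensional self-adjoint Ornstein--Uhlenbeck semigroups with parameters $(\alpha^{(n)}_i+2b_i,b_i)$, $i\in\lin d\rin$; conjugating each factor by a suitable coordinate-wise dilation turns these into the semigroups with the fixed parameters of \eqref{eq:cut-off-t}, and since total variation is invariant under linear bijections the quantity $\frd^{(n)}$ of \eqref{frdn} is unchanged, so $(\tilde P^{(n)})_{n\in\NN}$ has precisely the cut-off (with window and profile in the refined statements) established by Lachaud \cite{Lachaud} at the times $t^{(n)}$ of \eqref{eq:cut-off-t}. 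For the negligibility \eqref{neglig}, with $t^{(n)}=\log n/(2b_\wedge)$ one has
\[
\frac{\boldsymbol{t}^{(n)}_0}{t^{(n)}}\ =\ \frac{2\,\log\kappa\!\big(V^{(n)}\mathrm{\Gamma}^{(n)}_\infty (V^{(n)})^*\big)}{\log n}\,,
\]
and using $\kappa(V^{(n)}\mathrm{\Gamma}^{(n)}_\infty (V^{(n)})^*)\leq\kappa(V^{(n)})^2\kappa(\mathrm{\Gamma}^{(n)}_\infty)$ (with $\kappa(V^{(n)})=1$ when $B^{(n)}$ is symmetric) together with the assumption $\log n/\kappa(\mathrm{\Gamma}^{(n)}_\infty)\to+\iy$, which forces $\kappa(\mathrm{\Gamma}^{(n)}_\infty)=o(\log n)$ and hence $\log\kappa(V^{(n)}\mathrm{\Gamma}^{(n)}_\infty (V^{(n)})^*)=o(\log n)$, one gets $\boldsymbol{t}^{(n)}_0/t^{(n)}\to 0$. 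Then \refthm{theo3} transfers the cut-off of $(\tilde P^{(n)})_{n\in\NN}$ to $(P^{(n)})_{n\in\NN}$ at the same times $(t^{(n)})_{n\in\NN}$, which is the claim.

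I expect the main obstacle to be the identification of the reference family with the one of \cite{Lachaud}: the parameters $\bm{\alpha}^{(n)}$ returned by \refprop{Prop:OU} depend on $\mathrm{\Gamma}^{(n)}$ (through $\kappa(V^{(n)}\mathrm{\Gamma}^{(n)}_\infty (V^{(n)})^*)$ and $\gamma^{(n)}_{\wedge,\infty}$) and therefore vary with $n$, so one must make rigorous the claim that $\frd^{(n)}$ — and hence the whole cut-off/window/profile data — is insensitive to these parameters, which the dilation-invariance of total variation should deliver but has to be verified in the window and profile regimes too. A secondary, purely technical point is the uniform control of $\kappa(V^{(n)})$ required to propagate the $o(\log n)$ bound to the warm-up time; this is automatic when the $B^{(n)}$ are symmetric, or more generally block-diagonal with uniformly bounded block condition numbers, which one may wish to record as a standing assumption.
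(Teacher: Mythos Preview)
Your proposal is correct and follows essentially the same route as the paper: apply \refprop{Prop:OU} to produce for each $n$ a symmetric interweaving relation $P^{(n)}\stackrel{\boldsymbol{t}^{(n)}_0}{\leftrightsquigarrow}\tilde P^{(n)}$ with the self-adjoint product Ornstein--Uhlenbeck semigroup, invoke Lachaud's cut-off for the reference family, and then transfer via \refthm{theo3} after checking \eqref{neglig}. The paper's proof is terser---it simply writes $\boldsymbol{t}^{(n)}_0=\tfrac{1}{b_\wedge}\log\kappa(\mathrm{\Gamma}^{(n)}_\infty)$ and appeals directly to Lachaud and \refthm{theo3}---whereas you carefully track the $V^{(n)}$ factor from \refprop{Prop:OU} and flag the dependence of $\bm{\alpha}^{(n)}$ on $\mathrm{\Gamma}^{(n)}$; these are genuine technical points that the paper's argument implicitly assumes are handled by the block-diagonal structure spelled out in the example immediately following the corollary (where $\kappa(V^{(n)})$ is constant in $n$ and the $\alpha$'s replicate), so your caveats are well taken rather than gaps in your reasoning.
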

\proof Under the conditions of the claim, we easily check that Proposition \ref{Prop:OU} entails that for each  $n \in \NN$, $\cont{ P^{(n)}} \stackrel{\mathbf{t}^{(n)}}{\looparrowleft} \wi{P}^{(n)}$ where  $\mathbf{t}^{(n)}= \frac{1}{b_\wedge} \log \kappa( \mathrm{\Gamma}^{(n)}_\infty)$  and $\tilde P^{(n)}$ is the semigroup of the self-adjoint Ornstein-Uhlenbeck process  defined before the corollary.
We conclude the proof by invoking the result of  Lachaud \cite{Lachaud}   recalled before the corollary and  Theorem \ref{theo3}.
\wwtbp\par


To finish this section, let us give a concrete example.
\par
Consider the matrices
\bq
\mathrm{\Gamma}\ \df\ \lt(\begin{array}{cc}0&0\\0&1\end{array}\rt)&\qquad &B\ \df\ \lt(\begin{array}{cc}0&-1\\1/2&2\end{array}\rt)
\eq\par
The corresponding Ornstein-Uhlenbeck is a simple example of a kinetic model: the first coordinate corresponds to the position in $\RR$
of a particle in the quadratic potential $\RR\ni x\mapsto x^2/4$,
 and the second coordinate is the  speed, on which is acting a Brownian motion.
It is a typical instance of a hypoelliptic system.
To see it admits an invariant probability and the existence of $\mathrm{\Gamma}_\iy$, it is sufficient to check that the eigenvalues $b_1,b_2$ of $B$ are positive.
They are indeed the solutions of the second order equation
$X^2-2X+1/2=0$ and we get
\bq
b_1&=&1-1/\sqrt{2}\\
b_2&=&1+1/\sqrt{2}
\eq
\par Let $\mathrm{\Gamma}_\iy$ and $\alpha_1,\,\alpha_2>0$ be as in Proposition  \ref{Prop:OU}.
Denote $b\df(b_1,b_2)$ and $\alpha\df(\alpha_1,\alpha_2)$.
\par
For any $n\in\NN$, introduce the tensorizations
\bq
\mathrm{\Gamma}^{(n)}\ \df\ \lt(\begin{array}{ccccc}\mathrm{\Gamma}&0& 0&\cdots&0\\
0&\mathrm{\Gamma}&0& \cdots&0\\
0&0&\mathrm{\Gamma}&\ddots&0\\
\vdots&\vdots&\ddots& \ddots&\vdots\\
0&0&\dots&0&\mathrm{\Gamma}
\end{array}\rt)&\qquad &B^{(n)}\ \df\ \lt(\begin{array}{ccccc}B&0& 0&\cdots&0\\
0&B&0& \cdots&0\\
0&0&B&\ddots&0\\
\vdots&\vdots&\ddots& \ddots&\vdots\\
0&0&\dots&0&B
\end{array}\rt)
\eq
\par
This block structure implies that for any $n\in\NN$, the Ornstein-Uhlenbeck semigroup  $P^{(n)}$ associated to $(\mathrm{\Gamma}^{(n)},B^{(n)})$ is hypoelliptic  and we have
\bq
\mathrm{\Gamma}_\iy^{(n)}\ \df\ \lt(\begin{array}{ccccc}\mathrm{\Gamma}_\iy&0& 0&\cdots&0\\
0&\mathrm{\Gamma}_\iy&0& \cdots&0\\
0&0&\mathrm{\Gamma}_\iy&\ddots&0\\
\vdots&\vdots&\ddots& \ddots&\vdots\\
0&0&\dots&0&\mathrm{\Gamma}_\iy
\end{array}\rt)
\eq\par
In particular $\kappa(\mathrm{\Gamma}_\iy^{(n)})$ does not depend on $n\in\NN$ and
$\bm{\alpha}^{(n)}=(\alpha,\ldots,\alpha) \in \RR^{dn}$ and $\bm{b}^{(n)}=(b,\ldots,b)\in \RR^{dn}$.
\par
It follows from Corollary \ref{cor:OU} that  the family $(P^{(n)})_{n\in \NN}$
has a cut-off  at the times $(\log(n)/(2b_1))_{n\in\NN}$.

\section{Random  warm-up time examples} \label{sec:rwt}

In this section, we present several examples of \cmirs\ for which the warm-up time is a positive random variable. This includes the family of Laguerre and  Jacobi processes and examples of  Subsection~\ref{cadLp} that  are
extended in various directions either by playing with the underlying parameters or by pertubating their generator by  a non-local component, that is by adding  jumps in their dynamics.   We also describe several interesting applications of \cmirs\ in these contexts.

\subsection{Diffusive Laguerre operators}\label{dLo}

The classical Laguerre generators $L_{\beta,\sigma}$, for $\beta,\sigma>0$, were recalled in Subsection \ref{cadLp}.
Here we will drop the second parameter $\sigma>0$, since we are more interested in the parameter $\beta>0$:
we would like to counter the bad
behavior of the logarithmic Sobolev constant for small $\beta>0$ via \cmirs, in the spirit of what we have done for the two-point state space in Subsection \ref{2points}.
Namely we are looking for \cmirs\  between Laguerre semigroups with different parameters $\beta>0$.\par\me
For any $\beta>0$, we write simply $L_{\beta}\df L_{\beta,1}$, $\nu_{\beta}\df \nu_{\beta,1}$ and  $P^{(\beta)}\df P^{(\beta,1)}$, with the notations of Subsection \ref{cadLp}.
For any $\beta,\ptbeta>0$, consider the Markov kernel $\LaLa $ from $\RR_+$ to $\RR_+$ corresponding to the multiplication by
a Beta random variable of parameters $\ptbeta$ and $\deltabeta$, namely for any $f\in\esB(\RR_+)$, the set of bounded measurable mappings on $\RR_+$,
\bq
\fo x\in\RR_+,\qquad \LaLa [f](x)&\df&\frac{\Gamma(\gdbeta)}{\Gamma(\deltabeta)\Gamma(\ptbeta)} \int_0^1 f(rx) r^{\ptbeta-1}(1-r)^{\deltabeta-1}\, dr\eq
\par
Its interest for us, is that according to Patie and Savov \cite{Patie-Savov-GeL} 
we have the intertwining relation
\bq
\fo \beta>\ptbeta>0,\,\fo t\geq 0,\qquad P_t^{(\gdbeta)}\LaLa &=&\LaLa P_t^{(\ptbeta)}\eq
where the products are understood as the compositions of Markov kernels. They can also be seen as compositions of operators acting on $\esL^2$-spaces and we have the
following commuting diagram for any $\beta>\ptbeta>0$ and $ t\geq 0$:
\begin{figure}[H]\centering
\begin{tikzcd}[scale=2]
\esL^2(\nu_\gdbeta)\arrow[r, "P_t^{(\gdbeta)}"] \arrow[d, "\LaLa "']
& \esL^2(\nu_\gdbeta)\arrow[d, "\LaLa " ]  \\
\esL^2(\nu_{\ptbeta})\arrow[r, "P_t^{(\ptbeta)}" ]
& \esL^2(\nu_{\ptbeta})
\end{tikzcd}
\caption{Intertwining relation between $P_t^{(\gdbeta)}$ and $P_t^{(\ptbeta)}$}\label{fig1b}
\end{figure}
\par
To get an intertwining relation in the reverse direction, we pass to the adjoint relations, taking into account that
$P_t^{(\gdbeta)}$ and $P_t^{(\ptbeta)}$ are self-adjoint in $\esL^2(\nu_\gdbeta)$ and
$\esL^2(\nu_{\ptbeta})$ respectively:\par
\begin{figure}[H]\centering
\begin{tikzcd}[scale=2]
\esL^2(\nu_{\ptbeta})\arrow[r, "P_t^{(\ptbeta)}"] \arrow[d, "\LaLad"']
& \esL^2(\nu_{\ptbeta})\arrow[d, "\LaLad" ]  \\
\esL^2(\nu_{\gdbeta})\arrow[r, "P_t^{(\gdbeta)}" ]
& \esL^2(\nu_{\gdbeta})
\end{tikzcd}
\caption{Intertwining relation between  $P_t^{(\ptbeta)}$ and $P_t^{(\gdbeta)}$}\label{fig2b}
\end{figure}
\noindent
where $\LaLad\st \esL^2(\nu_\gdbeta)\ri \esL^2(\nu_{\ptbeta})$
is the adjoint operator of
$\LaLa \st \esL^2(\nu_{\ptbeta})\ri \esL^2(\nu_{\gdbeta})$.\par
Since $\nu_\gdbeta$ and $\nu_{\ptbeta}$ are both probability measures, it is known a priori that $\LaLad$
corresponds to a Markov kernel. Let us compute it more precisely:
\begin{lem}
We have for any $\beta,\ptbeta>0$ and any $g\in\esB(\RR_+)$,
\bq
\fo x\in\RR_+,\qquad \LaLad[g](x)&=&\frac{x^{\deltabeta}}{\Gamma(\deltabeta)} \int_0^{+\iy} g((1+s)x)\, s^{\deltabeta} \exp(-sx)\, ds\eq
\end{lem}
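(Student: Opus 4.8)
The plan is to read off $\LaLad$ directly from the defining property of the Hilbert-space adjoint: $\LaLad\st\esL^2(\nu_\gdbeta)\ri\esL^2(\nu_{\ptbeta})$ is characterised by $\int_{\RR_+}\LaLa[f]\,g\;d\nu_\gdbeta=\int_{\RR_+}f\,\LaLad[g]\;d\nu_{\ptbeta}$ for all $f\in\esL^2(\nu_{\ptbeta})$, $g\in\esL^2(\nu_\gdbeta)$, and it is enough to verify this for $f,g$ bounded and non-negative, the general case following by linearity and density; for such $f,g$ positivity makes every exchange of integration below legitimate (Tonelli). So I would insert into the left-hand side the explicit formula for $\LaLa[f]$ together with the densities of $\nu_\gdbeta$ and $\nu_{\ptbeta}$, using the cancellation $\frac{\Gamma(\gdbeta)}{\Gamma(\deltabeta)\Gamma(\ptbeta)}\cdot\frac{1}{\Gamma(\gdbeta)}=\frac{1}{\Gamma(\deltabeta)\Gamma(\ptbeta)}$, to reach the manifestly symmetric form
\[
\int_{\RR_+}\LaLa[f]\,g\;d\nu_\gdbeta=\frac{1}{\Gamma(\deltabeta)\Gamma(\ptbeta)}\int_0^{\iy}\!\!\int_0^1 f(rx)\,g(x)\,r^{\ptbeta-1}(1-r)^{\deltabeta-1}x^{\gdbeta-1}e^{-x}\,dr\,dx .
\]

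Next I would substitute $y=rx$ in the inner integral (so that $r^{\ptbeta-1}x^{\gdbeta-1}\,dr=y^{\ptbeta-1}(x-y)^{\deltabeta-1}x^{-(\deltabeta-1)}\cdot x^{\gdbeta-1}\,dy/x$, and the remaining powers of $x$ collapse since $\gdbeta=\ptbeta+\deltabeta$), interchange the order of integration, and match the result against $\int_{\RR_+}f\,\LaLad[g]\,d\nu_{\ptbeta}=\frac{1}{\Gamma(\ptbeta)}\int_0^{\iy}f(y)\,\LaLad[g](y)\,y^{\ptbeta-1}e^{-y}\,dy$; cancelling the common factor $f(y)\,y^{\ptbeta-1}$ (valid for all $f$ in a dense class, hence $\nu_{\ptbeta}$-a.e., and pointwise since both sides are continuous in $y$ for bounded $g$) leaves $\LaLad[g](y)=\frac{1}{\Gamma(\deltabeta)}\int_y^{\iy}g(x)\,(x-y)^{\deltabeta-1}e^{-(x-y)}\,dx$. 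The change of variable $x=(1+s)y$, $dx=y\,ds$, then produces
\[
\LaLad[g](y)=\frac{y^{\deltabeta}}{\Gamma(\deltabeta)}\int_0^{\iy}g\bigl((1+s)y\bigr)\,s^{\deltabeta-1}e^{-sy}\,ds ,
\]
which is the announced identity. As a sanity check, the further substitution $u=sy$ rewrites this as $\LaLad[g](y)=\int_{\RR_+}g(y+u)\,\nu_{\deltabeta}(du)$: $\LaLad$ is convolution by the law $\nu_{\deltabeta}$, transparently a Markov kernel, and in particular $\LaLad[\un]=\un$ because $\int_0^{\iy}s^{\deltabeta-1}e^{-sy}\,ds=\Gamma(\deltabeta)\,y^{-\deltabeta}$ — which is precisely the value of the exponent of $s$ forced by the Markov property.

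An essentially equivalent, shorter variant avoids the intermediate $\int_y^\infty$ integral: change variables directly in the double integral via $(x,r)\mapsto(y,s)$ with $y=rx$ and $s=(1-r)/r$, i.e.\ $r=(1+s)^{-1}$, $x=(1+s)y$, Jacobian $dx\,dr=(1+s)^{-1}dy\,ds$, with $(y,s)$ ranging over $(0,\iy)^2$. The point is that the powers of $(1+s)$ coming from $r^{\ptbeta-1}$, from $(1-r)^{\deltabeta-1}$, from $x^{\gdbeta-1}$ and from the Jacobian add up to $(1-\ptbeta)+(1-\deltabeta)+(\gdbeta-1)-1=0$ because $\gdbeta=\ptbeta+\deltabeta$, so they disappear; after isolating the $\nu_{\ptbeta}$-density $y^{\ptbeta-1}e^{-y}/\Gamma(\ptbeta)$ in the variable $y$, one reads off the same formula.

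Neither route hides any real difficulty. The Fubini/Tonelli steps are justified by non-negativity after restricting to bounded $f,g$; passing from ``equality against a dense class'' to ``equality of functions'' is routine (both sides are continuous in $y$); and the only place demanding genuine attention is the bookkeeping of exponents through the substitution $x=(1+s)y$, which is exactly where the prefactor $y^{\deltabeta}$ and the power of $s$ acquire their final form.
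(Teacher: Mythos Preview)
Your argument is correct and is essentially the same computation as the paper's: both unfold the pairing $\int \LaLa[f]\,g\,d\nu_\gdbeta$, perform the substitution that puts $f$ alone in its variable, swap the order of integration, and read off $\LaLad$; the paper keeps $r$ as the inner variable and only afterwards sets $s=(1-r)/r$, while you pass to $y=rx$ and then $x=(1+s)y$, but these are the same change of variables in a different order. Your sanity check (recognising $\LaLad$ as convolution by $\nu_{\deltabeta}$ and checking $\LaLad[\un]=\un$) is a nice addition and in particular confirms that the exponent in the statement should be $s^{\deltabeta-1}$, as you write, rather than $s^{\deltabeta}$; the paper itself uses $s^{\deltabeta-1}$ in the subsequent computation of $\LaLad[p_n]$, so the $s^{\deltabeta}$ in the displayed lemma is a typo.
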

\proof
For any $f,g\in \esB(\RR_+)$,
we compute
\bq
\nu_{\gdbeta}[g\LaLa[f]]&=&
\frac{\Gamma(\gdbeta)}{\Gamma(\deltabeta)\Gamma(\ptbeta)}\int_{\RR_+} g(x)\lt( \int_0^1 f(rx) r^{\ptbeta-1}(1-r)^{\deltabeta-1}\, dr\rt)
\frac{x^{\gdbeta-1}\exp(-x)}{\Gamma(\gdbeta)}\, dx\\
&=&\int_0^1\lt( \int_{\RR_+} g(x) f(rx)x^{\gdbeta-1}\exp(-x)\, dx \rt)
r^{\ptbeta-1}(1-r)^{\deltabeta-1}\, dr\\
&=&
\int_0^1\lt(r^{-(\gdbeta)} \int_{\RR_+} g(x/r) f(x)x^{\gdbeta-1}\exp(-x/r)\, dx \rt)
r^{\ptbeta-1}(1-r)^{\deltabeta-1}\, dr\\
&=&
\int_{\RR_+}f(x)\lt(x^{\deltabeta} \int_0^1 g(x/r)\, r^{\ptbeta-\gdbeta-1} (1-r)^{\deltabeta-1}\exp(-x(1/r-1))\, dr\rt)x^{\ptbeta-1} \exp(-x) \,dx
\eq
Since the last expression must be equal to $\Gamma(\deltabeta)\Gamma(\ptbeta)\nu_{\ptbeta}[f\Lambda^*_{\gdbeta,\ptbeta}[g]]$, for any $f\in \esB(\RR_+)$, we obtain
\bq
\fo x\in\RR_+,\qquad \LaLad[g](x)&=&\frac{x^{\deltabeta}}{\Gamma(\deltabeta)} \int_0^1 g(x/r)\,  \frac1{r^2}\lt(\frac{1-r}{r}\rt)^{\deltabeta-1}\exp(-x(1-r)/r)\, dr
\eq
and
we deduce the announced result via the change of variable $s=(1-r)/r$.
\wwtbp\par
To get a c.mi.r., let us compute  the Markov kernel $\LaLa \LaLad$. Following the argumentation of Remark \ref{1errems}(f), we know a priori that $\LaLa \LaLad$
commutes with the $P_t^{(\gdbeta)}$, for all $t\geq 0$. Since $L_\gdbeta$ is diagonalizable in $\esL^2(\nu_\gdbeta)$ and all its eigenvalues are non-positive and simple,
it follows from functional calculus that $\LaLa \LaLad$ is of the form $F(-L_\gdbeta)$, where $F\st \RR_+\ri \RR$ is a measurable mapping.
Here is its explicit formula:
\begin{pro}\label{complmonot}
For any $\beta,\ptbeta>0$, we have \bqn{LLFL}
\LaLa \LaLad&=&F_{\beta_{\varepsilon}}(-L_\gdbeta)\eqn
with
\bqn{Fbb}
\fo u\in\RR_+,\qquad
F_{\beta_{\varepsilon}}(u)&\df&  \int_{0}^{\infty}e^{-us}\P(\tau^{(\beta_{\varepsilon})} \in ds)=\frac{\Gamma(\beta+\varepsilon)\Gamma(u+\varepsilon)}{\Gamma(\varepsilon)\Gamma(u+\beta+\varepsilon)} 
\eqn
and
\bqn{pibetabeta}
\fo s\geq 0,\qquad \P(\tau^{(\beta_{\varepsilon})} \in ds)&\df& \frac{\Gamma(\gdbeta)}{\Gamma(\deltabeta)\Gamma(\ptbeta)}\exp(-\ptbeta s)(1-\exp(-s))^{\deltabeta-1}\, ds\eqn
Similarly, we  have, still for $\beta,\ptbeta>0$,
 \bq
\LaLad\LaLa &=&F_{\beta_{\varepsilon}}(-L_{\ptbeta})\eq
\end{pro}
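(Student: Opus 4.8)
The plan is to compute the kernel $\LaLa\LaLad$ directly, reducing everything to the action on the orthonormal basis of Laguerre polynomials which diagonalizes $L_\gdbeta$ in $\esL^2(\nu_\gdbeta)$. First I would recall that the eigenfunctions of $-L_\gdbeta$ are the (normalized) Laguerre polynomials $\ell^{(\gdbeta)}_n$, $n\in\ZZ_+$, with eigenvalues $n$, and that $\LaLa$ acts diagonally in the sense that $\LaLa$ maps the degree-$n$ Laguerre polynomial for parameter $\ptbeta$ to a scalar multiple of the degree-$n$ Laguerre polynomial for parameter $\gdbeta$; this is the standard content of the intertwining $P^{(\gdbeta)}_t\LaLa=\LaLa P^{(\ptbeta)}_t$ together with simplicity of the spectrum. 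Concretely, since $\LaLa$ is multiplication by an independent $\mathrm{Beta}(\varepsilon,\beta)$ variable, one computes the eigenvalue $c_n$ by testing against monomials: $\LaLa[x^k](x)=x^k\,\mathbb{E}[R^k]=x^k\frac{\Gamma(\gdbeta)\Gamma(k+\ptbeta)}{\Gamma(\ptbeta)\Gamma(k+\gdbeta)}$, and then matching leading coefficients of the Laguerre polynomials gives $c_n=\frac{\Gamma(\gdbeta)\Gamma(n+\ptbeta)}{\Gamma(\ptbeta)\Gamma(n+\gdbeta)}$ (using that the two families of Laguerre polynomials share, up to normalization, the same leading behaviour since they are polynomials in $x$ of the same degree). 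By the adjointness set-up in Figures \ref{fig1b}--\ref{fig2b}, $\LaLad$ acts on the basis $\ell^{(\gdbeta)}_n$ with the \emph{same} eigenvalue $c_n$ — because $\LaLa$ and $\LaLad$ are adjoint and each basis is orthonormal — so $\LaLa\LaLad$ is diagonal with eigenvalue $c_n^2$? No: one must be careful, $\LaLad:\esL^2(\nu_\gdbeta)\to\esL^2(\nu_{\ptbeta})$ sends $\ell^{(\gdbeta)}_n$ to $c_n\ell^{(\ptbeta)}_n$, hence $\LaLa\LaLad$ sends $\ell^{(\gdbeta)}_n\mapsto c_n^2\,\ell^{(\gdbeta)}_n$; but in fact the normalizations of the Beta kernel are chosen precisely so that a single factor $c_n$ appears, as I explain next.

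Having identified $\LaLa\LaLad$ as the operator diagonal in the $\ell^{(\gdbeta)}_n$ basis with eigenvalue $F_{\beta_\varepsilon}(n)$ for the function $F_{\beta_\varepsilon}$ defined in \eqref{Fbb}, the identity \eqref{LLFL} follows immediately from functional calculus: $F_{\beta_\varepsilon}(-L_\gdbeta)$ is by definition the operator with eigenvalue $F_{\beta_\varepsilon}(n)$ on $\ell^{(\gdbeta)}_n$. Thus the real task is the identification of the eigenvalues, i.e. showing $F_{\beta_\varepsilon}(n)=\frac{\Gamma(\beta+\varepsilon)\Gamma(n+\varepsilon)}{\Gamma(\varepsilon)\Gamma(n+\beta+\varepsilon)}$ and that this equals the Laplace transform of the measure in \eqref{pibetabeta}. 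The latter is a one-line Beta-integral computation: with the substitution $r=e^{-s}$, $\int_0^\infty e^{-ns}\frac{\Gamma(\gdbeta)}{\Gamma(\beta)\Gamma(\varepsilon)}e^{-\varepsilon s}(1-e^{-s})^{\beta-1}ds=\frac{\Gamma(\gdbeta)}{\Gamma(\beta)\Gamma(\varepsilon)}\int_0^1 r^{n+\varepsilon-1}(1-r)^{\beta-1}dr=\frac{\Gamma(\gdbeta)}{\Gamma(\beta)\Gamma(\varepsilon)}\cdot\frac{\Gamma(n+\varepsilon)\Gamma(\beta)}{\Gamma(n+\gdbeta)}$, which is exactly the claimed closed form. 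For the eigenvalue identification itself, the cleanest route is to combine the two intertwinings of Figures \ref{fig1b}--\ref{fig2b}: $\LaLa$ and $\LaLad$ both intertwine $P^{(\gdbeta)}$ and $P^{(\ptbeta)}$, hence $\LaLa\LaLad$ commutes with every $P^{(\gdbeta)}_t$; by Remark \ref{1errems}(f) (simplicity of the spectrum of $L_\gdbeta$) it is therefore of the form $F(-L_\gdbeta)$, and $F(n)$ is recovered by evaluating $\langle\LaLa\LaLad\ell^{(\gdbeta)}_n,\ell^{(\gdbeta)}_n\rangle_{\nu_\gdbeta}=\langle\LaLad\ell^{(\gdbeta)}_n,\LaLad\ell^{(\gdbeta)}_n\rangle_{\nu_{\ptbeta}}=c_n^2$ — and one checks $c_n^2$ coincides with the Gamma expression, OR, more robustly, one shows directly that $\LaLad\ell^{(\gdbeta)}_n=c_n\ell^{(\ptbeta)}_n$ and $\LaLa\ell^{(\ptbeta)}_n=c_n\ell^{(\gdbeta)}_n$ so that the composition picks up $c_n^2=F_{\beta_\varepsilon}(n)$, the point being that the normalizing constant $\frac{\Gamma(\gdbeta)}{\Gamma(\beta)\Gamma(\varepsilon)}$ in \eqref{pibetabeta} is precisely $(\mathbb{E}[R^0])^{-1}\cdot(\text{total mass})$, forcing $F_{\beta_\varepsilon}(0)=1$ as it must for a probability kernel.

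The symmetric statement $\LaLad\LaLa=F_{\beta_\varepsilon}(-L_{\ptbeta})$ is proved identically, swapping the roles of $\gdbeta$ and $\ptbeta$: now $\LaLad\LaLa$ commutes with every $P^{(\ptbeta)}_t$, is diagonal on $\ell^{(\ptbeta)}_n$ with the same eigenvalue $c_n^2=F_{\beta_\varepsilon}(n)$ (the scalars $c_n$ being symmetric in the two directions, as $\LaLa$ and $\LaLad$ are mutually adjoint), and hence equals $F_{\beta_\varepsilon}(-L_{\ptbeta})$ by functional calculus. I expect the main obstacle to be purely bookkeeping: verifying cleanly that $\LaLad$ really does act as a scalar on each $\ell^{(\gdbeta)}_n$ — equivalently, that $\LaLa$ maps polynomials of degree $n$ to polynomials of degree $n$ and is triangular in the monomial basis with the stated diagonal entries — and pinning down the normalization so that the eigenvalue is $c_n$ rather than, say, $c_n$ times a spurious constant; once the action on monomials $\LaLa[x^k]=\frac{\Gamma(\gdbeta)\Gamma(k+\ptbeta)}{\Gamma(\ptbeta)\Gamma(k+\gdbeta)}x^k$ is recorded, everything else is the Beta-integral above and an appeal to Remark \ref{1errems}(f). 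One should also note in passing that, as promised by the surrounding discussion, $F_{\beta_\varepsilon}$ is completely monotone — being the Laplace transform of the positive measure \eqref{pibetabeta} — and indeed $F_{\beta_\varepsilon}=e^{-\phi_{\beta_\varepsilon}}$ for the Bernstein function $\phi_{\beta_\varepsilon}(u)=\log\Gamma(u+\gdbeta)-\log\Gamma(u+\varepsilon)-\log\Gamma(\gdbeta)+\log\Gamma(\varepsilon)$, so the warm-up time $\tau^{(\beta_\varepsilon)}$ is infinitely divisible, which is what makes this an IRID example in the sense of the Definition.
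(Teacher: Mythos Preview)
Your overall strategy coincides with the paper's: reduce \eqref{LLFL} to the action on the Laguerre eigenbasis, use the intertwinings of Figures~\ref{fig1b}--\ref{fig2b} to see that $\LaLa\LaLad$ is diagonal there, and identify the eigenvalue with the Beta integral in \eqref{Fbb} via the substitution $r=e^{-s}$. That part is fine, and so is the last assertion about $\LaLad\LaLa$.

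The genuine gap is in your eigenvalue computation. You compute $c_n=\frac{\Gamma(\gdbeta)\Gamma(n+\ptbeta)}{\Gamma(\ptbeta)\Gamma(n+\gdbeta)}$ from the action of $\LaLa$ on monomials---equivalently, as the proportionality constant in $\LaLa\,\mathcal{L}_n^{(\ptbeta)}=c_n\,\mathcal{L}_n^{(\gdbeta)}$ when the $\mathcal{L}_n^{(\cdot)}$ share the same leading coefficient $(-1)^n/n!$. You then invoke adjointness with respect to \emph{orthonormal} bases to conclude $\LaLad\,\ell_n^{(\gdbeta)}=c_n\,\ell_n^{(\ptbeta)}$ and hence eigenvalue $c_n^2$. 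These two normalizations are incompatible: the standard Laguerre polynomials have $\lVert\mathcal{L}_n^{(\gdbeta)}\rVert^2_{\nu_\gdbeta}=\Gamma(n+\gdbeta)/(n!\,\Gamma(\gdbeta))$, so the orthonormal multiplier is $c_n'=c_n\,\lVert\mathcal{L}_n^{(\gdbeta)}\rVert/\lVert\mathcal{L}_n^{(\ptbeta)}\rVert$, and the eigenvalue of $\LaLa\LaLad$ is $(c_n')^2=c_n^2\cdot\frac{\lVert\mathcal{L}_n^{(\gdbeta)}\rVert^2}{\lVert\mathcal{L}_n^{(\ptbeta)}\rVert^2}=c_n^2\cdot c_n^{-1}=c_n$, not $c_n^2$. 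In particular your assertion ``$c_n^2=F_{\beta_\varepsilon}(n)$'' is false (check $n=1$: $c_1=\ptbeta/(\gdbeta)$ but $c_1^2\neq c_1$ unless $\beta=0$).

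The paper sidesteps this bookkeeping entirely: it uses the explicit formula for $\LaLad$ (the lemma just above) to compute $\LaLad[p_n]$ directly, reads off that its leading coefficient $\wit F_{\beta_\varepsilon}(n)$ equals $1$, and concludes the eigenvalue of $\LaLa\LaLad$ is $\wi F_{\beta_\varepsilon}(n)\cdot\wit F_{\beta_\varepsilon}(n)=c_n\cdot 1=F_{\beta_\varepsilon}(n)$. Either fix your adjointness argument by tracking the norm ratio as above, or adopt the paper's direct monomial computation for $\LaLad$; as written, your proof does not establish the correct eigenvalue.
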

\proof
It is well-known (see e.g.\ the book of Szeg\"{o} \cite{MR0372517}) that the spectrum of $-L_\gdbeta$ is $\ZZ_+$ and for each eigenvalue $n\in\ZZ_+$,
an associated eigenvector is the Laguerre polynomial $\mathcal{L}_n^{(\gdbeta)}$ of degree $n$.
It follows that to prove
\eqref{LLFL},
it is sufficient to show that for any $n\in\ZZ_+$,
we have
\bq
\LaLa \LaLad [\mathcal{L}_n^{(\gdbeta)}] &=&F_{\beta_{\varepsilon}}(n)[\mathcal{L}_n^{(\gdbeta)}]\eq
From the commutation of $\LaLa \LaLad$
 with the $P_t^{(\gdbeta)}$ for all $t\geq 0$, we know a priori that the l.h.s.\ is proportional to $\mathcal{L}_n^{(\gdbeta)}$.
 Thus, denoting $p_n\st\RR_+\ni x\mapsto x^n$,
 the monomial of degree $n$, it is sufficient to check that
 $\LaLa \LaLad[p_n]$ is equal to  $F_{\beta_{\varepsilon}}(n)p_n$, up to a polynomial of degree $n-1$.
 This operation can be decomposed into two similar sub-tasks. Indeed from
 Figure \ref{fig1b}  we deduce that for any $t\geq 0$,
 \bq P_t^{(\gdbeta)}\LaLa [\mathcal{L}_n^{(\ptbeta)}]\ =\ \LaLa P_t^{(\ptbeta)}[\mathcal{L}_n^{(\ptbeta)}]
 \ =\ \exp(-nt)\LaLa [\mathcal{L}_n^{(\ptbeta)}]\eq
 namely $\LaLa [\mathcal{L}_n^{(\ptbeta)}]$ is proportional to $\mathcal{L}_n^{(\gdbeta)}$.
 So let $\wi F_{\beta_{\varepsilon}}(n)\in\RR$ be such that $\LaLa [p_n]$ is equal to $\wi F_{\beta_{\varepsilon}}(n)p_n$, up to a polynomial of degree $n-1$.
 Similarly,  taking into account  Figure \ref{fig2b}, there exists $\wit F_{\beta_{\varepsilon}}(n)\in \RR$ such that
 $\LaLa ^*[p_n]$ is equal to $\wit F_{\beta_{\varepsilon}}(n)p_n$, up to a polynomial of degree $n-1$.
 It follows that $F_{\beta_{\varepsilon}}(n)=\wi F_{\beta_{\varepsilon}}(n)\wit F_{\beta_{\varepsilon}}(n)$ and we just need to compute $\wi F_{\beta_{\varepsilon}}(n)$ and $\wit F_{\beta_{\varepsilon}}(n)$.
 Let us start with $\wi F_{\beta_{\varepsilon}}(n)$. We have  for any $x\in\RR_+$,
 \bqn{F1n}
\nonumber \LaLa [p_n](x)&=&
 \frac{\Gamma(\gdbeta)}{\Gamma(\deltabeta)\Gamma(\ptbeta)} \int_0^1 (rx)^n r^{\ptbeta-1}(1-r)^{\deltabeta-1}\, dr\\
 &=&
 \frac{\Gamma(\gdbeta)}{\Gamma(\deltabeta)\Gamma(\ptbeta)} \lt(\int_0^1  r^{n+\ptbeta-1}(1-r)^{\deltabeta-1}\, dr\rt) x^n\\
\nonumber &=&
 \frac{\Gamma(\gdbeta)}{\Gamma(\deltabeta)\Gamma(\ptbeta)} \frac{\Gamma(n+\ptbeta)\Gamma(\deltabeta)}{\Gamma(n+\gdbeta)}p_n(x)
 \eqn
 and thus
 \bqn{wFbwb}
 \wi F_{\beta_{\varepsilon}}(n)\ =\ \frac{\Gamma(\gdbeta)\Gamma(n+\ptbeta)}{\Gamma(\ptbeta)\Gamma(n+\gdbeta)}\ =\ \frac{(n+\ptbeta-1)(n+\ptbeta-2)\cdots \ptbeta}{(n+\gdbeta-1)(n+\gdbeta-2)\cdots \gdbeta}.
  \eqn
 On the other hand, for $\wit F_{\beta_{\varepsilon}}(n)$, we have  for any $x\in\RR_+$,
 \bq\LaLad[p_n](x)&=&\frac{x^{\deltabeta}}{\Gamma(\deltabeta)}  \int_0^{+\iy} ((1+s)x)^n\, s^{\deltabeta-1} \exp(-sx)\, ds
 \\
 &=&\frac{x^{\deltabeta}}{\Gamma(\deltabeta)} \lt( \int_0^{+\iy} (1+s)^{n}\, s^{\deltabeta-1} \exp(-sx)\, ds \rt)x^n\\
 &=& \frac{1}{\Gamma(\deltabeta)} \lt( \int_0^{+\iy} (1+s/x)^{n}\, s^{\deltabeta-1} \exp(-s)\, ds \rt)x^n\\
 &=&\sum_{m=0}^{n}\binom{n}{m} \frac{1}{\Gamma(\deltabeta)} \lt( \int_0^{+\iy} s^m\, s^{\deltabeta-1} \exp(-s)\, ds \rt)x^{n-m}.
 \eq
It follows that
\bq
\wit F_{\beta_{\varepsilon}}(n)&=&\binom{n}{0} \frac{1}{\Gamma(\deltabeta)}  \int_0^{+\iy} \, s^{\deltabeta-1} \exp(-s)\, ds
=\frac{1}{\Gamma(\deltabeta)} \Gamma(\deltabeta)
=1\eq
Thus we get that for all $n\in\ZZ_+$, $F_{\beta_{\varepsilon}}(n)=\wi F_{\beta_{\varepsilon}}(n)$.
Coming back to \eqref{F1n}, it appears that for any $n\in\ZZ_+$,
\bq
F_{\beta_{\varepsilon}}(n)&=& \frac{\Gamma(\gdbeta)}{\Gamma(\deltabeta)\Gamma(\ptbeta)} \int_0^1  r^{n+\ptbeta-1}(1-r)^{\deltabeta-1}\, dr\\
&=& \frac{\Gamma(\gdbeta)}{\Gamma(\deltabeta)\Gamma(\ptbeta)}\int_0^{+\iy}
\exp(-ns)\exp(-\ptbeta s)(1-\exp(-s))^{\deltabeta-1}\, ds\eq
where we considered the change of variable $r=\exp(-s)$. It justifies \eqref{LLFL}.
\par
The last assertion of the proposition is proven similarly, or by applying the $\esL^2$-version of Proposition \ref{symmetric}: $\LaLa$ is one-to-one, since it transforms
the orthogonal basis $(\mathcal{L}_n^{(\ptbeta)})_{n\in\ZZ_+}$ of $\esL^2(\nu_{\ptbeta})$ into an orthogonal basis of $\esL^2(\nu_{\gdbeta})$:
\bq
\fo n\in\ZZ_+,\qquad \LaLa[\mathcal{L}_n^{(\ptbeta)}]&=&\wi F_{\beta_{\varepsilon}}(n)\mathcal{L}_n^{(\gdbeta)}\eq
where $\wi F_{\beta_{\varepsilon}}(n)>0$ is given in \eqref{wFbwb}.
\wwtbp
\par
Thus we have shown the symmetric c.m.i.r.
between $P^{(\gdbeta)}_t$ and $P^{(\ptbeta)}_t$ described in the following Figure \ref{fig5}, for any $\gdbeta>\ptbeta>0$ and $t\geq 0$:
\begin{figure}[H]\centering
\begin{tikzcd}[scale=2]
\esL^2(\nu_{\gdbeta})\arrow[r, "P_t^{(\gdbeta)}"] \arrow[d, "\LaLa "] \arrow[dd, bend right=50 , "P_{\tau^{(\beta_{\varepsilon})}}^{(\gdbeta)}"']
& \esL^2(\nu_{\gdbeta})\arrow[d, "\LaLa "' ]  \arrow[dd, bend left=50 , "P_{\tau^{(\beta_{\varepsilon})}}^{(\gdbeta)}"]\\
\esL^2(\nu_{\ptbeta})\arrow[dd, bend right=50 , "P_{\tau^{(\beta_{\varepsilon})}}^{(\ptbeta)}"']\arrow[r, "P_t^{(\ptbeta)}" ]\arrow[d, "\LaLad"]
& \esL^2(\nu_{\ptbeta})\arrow[dd, bend left=50 , "P_{\tau^{(\beta_{\varepsilon})}}^{(\ptbeta)}"]\arrow[d, "\LaLad"' ]\\
\esL^2(\nu_{\gdbeta}) \arrow[d, "\LaLa "] \arrow[r, "P_t^{(\gdbeta)}"]
& \esL^2(\nu_{\gdbeta}) \arrow[d, "\LaLa "'] \\
\esL^2(\nu_{\ptbeta}) \arrow[r, "P^{(\ptbeta)}_{t}"]
& \esL^2(\nu_{\ptbeta})
\end{tikzcd}
\caption{interweaving relations  between $P_t^{(\gdbeta)}$ and $P_t^{(\ptbeta)}$}\label{fig5}
\end{figure}
\par
Since we are interested in the behavior for small shape parameter, let us denote for $\gdbeta\in(0,1/2)$,
$\tau_\gdbeta\df\pi_{1/2,\gdbeta}$.
We deduce the following bound
from Theorem \ref{theo1} and from the fact that $\alpha(1/2)=1$:
\begin{cor}
For any $\ptbeta \in(0,1/2)$ and any $m_0\in\cP((0,+\iy))$,
\bqn{hypo2}
\fo t\geq 0,\qquad  \Ent(m_{0}P_{t+\tau^{(\beta_\ptbeta)}}^{(\ptbeta)}\vert \nu_\ptbeta)&\leq & \exp(-t)\Ent(m_0\vert\nu_\ptbeta)\eqn
\end{cor}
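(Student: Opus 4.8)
The plan is to read the symmetric interweaving relation of Proposition~\ref{complmonot} in the direction $P^{(\ptbeta)}\looparrowleft P^{(\gdbeta)}$, with the auxiliary shape parameter chosen so that $\gdbeta=\tfrac12$ sits on the edge of the range $\{\beta\ge\tfrac12\}$ where the Laguerre log-Sobolev constant equals $1$, and then to invoke Theorem~\ref{theo1}.

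First I would fix $\ptbeta\in(0,\tfrac12)$ and choose the auxiliary parameter $\deltabeta:=\tfrac12-\ptbeta$; this is admissible (that is, positive) precisely because $\ptbeta<\tfrac12$, and it gives $\gdbeta=\tfrac12$. Proposition~\ref{complmonot} provides $\LaLad\LaLa=F_{\beta_\ptbeta}(-L_\ptbeta)$, and by \eqref{Fbb}--\eqref{pibetabeta} the right-hand side is $P^{(\ptbeta)}_{\tau^{(\beta_\ptbeta)}}$ with $\tau^{(\beta_\ptbeta)}$ distributed according to \eqref{pibetabeta} for this value of $\deltabeta$. Combining this with the intertwinings $\Gate{P^{(\ptbeta)}}{\LaLad}{P^{(\gdbeta)}}$ and $\Gate{P^{(\gdbeta)}}{\LaLa}{P^{(\ptbeta)}}$ of Figures~\ref{fig1b}--\ref{fig2b} yields the interweaving relation $P^{(\ptbeta)}\stackrel{\tau^{(\beta_\ptbeta)}}{\looparrowleft}P^{(\gdbeta)}$ (equivalently, this is the bottom half of Figure~\ref{fig5}, i.e.\ the symmetric relation of Proposition~\ref{complmonot} read from its middle copy of $\esL^2(\nu_\ptbeta)$ downwards).

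Next I would check the standing hypotheses of Subsection~\ref{sec:appl} for this pair: $\nu_\ptbeta$ and $\nu_\gdbeta$ are the unique invariant probabilities of $P^{(\ptbeta)}$ and $P^{(\gdbeta)}$, and since $\LaLad$ is a Markov kernel intertwining the two semigroups, $\nu_\ptbeta\LaLad$ is invariant for $P^{(\gdbeta)}$, hence equal to $\nu_\gdbeta$ by uniqueness. Because $\gdbeta=\tfrac12\ge\tfrac12$, Bakry's theorem recalled in Remark~\ref{rem:lsclag} gives $\alpha(\gdbeta)=1$, so that \eqref{conEntL} reads $\Ent(\wi m_0 P^{(\gdbeta)}_t\vert\nu_\gdbeta)\le e^{-t}\Ent(\wi m_0\vert\nu_\gdbeta)$ for every $t\ge0$ and $\wi m_0\in\cP((0,+\iy))$; that is, \eqref{vareps} holds for $\wi P=P^{(\gdbeta)}$ with the admissible rate $(t,E)\mapsto e^{-t}E$. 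Applying Theorem~\ref{theo1} with $(P,\wi P,\tau)=(P^{(\ptbeta)},P^{(\gdbeta)},\tau^{(\beta_\ptbeta)})$ then gives exactly $\Ent(m_0 P^{(\ptbeta)}_{t+\tau^{(\beta_\ptbeta)}}\vert\nu_\ptbeta)\le e^{-t}\Ent(m_0\vert\nu_\ptbeta)$, which is \eqref{hypo2}.

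I do not expect a genuine obstacle, since the substantive work---producing the explicit symmetric interweaving and identifying the warm-up law---is already carried out in Proposition~\ref{complmonot}, and the transfer is the ready-made Theorem~\ref{theo1}. The only points requiring attention are (i) orienting the interweaving so that the semigroup $P^{(\ptbeta)}$ one wants to control occupies the source slot, and (ii) selecting $\gdbeta=\tfrac12$, the boundary of the set on which $\alpha(\beta)=1$; this choice is available exactly under the hypothesis $\ptbeta<\tfrac12$, and it is also the one giving the smallest admissible warm-up, as $\tau^{(\beta_\ptbeta)}$ is stochastically increasing in $\gdbeta$.
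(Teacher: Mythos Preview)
Your proposal is correct and follows the same approach as the paper: the paper's entire argument is the single sentence ``We deduce the following bound from Theorem~\ref{theo1} and from the fact that $\alpha(1/2)=1$,'' and you have accurately unpacked this by orienting the symmetric interweaving of Proposition~\ref{complmonot} (Figure~\ref{fig5}) in the direction $P^{(\ptbeta)}\looparrowleft P^{(1/2)}$, checking the invariant-measure compatibility, and feeding the log-Sobolev input $\alpha(1/2)=1$ from Remark~\ref{rem:lsclag} into Theorem~\ref{theo1}.
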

Recall the estimate directly obtained by applying the logarithmic Sobolev inequality
satisfied by the generator $L_\ptbeta$ for any $\ptbeta\in(0,1/2)$:
\bqn{hypo3}
\fo m_0\in\cP((0,+\iy)),\,\fo t\geq 0,\qquad  \Ent(m_{0}P_{t}^{(\ptbeta)}\vert \nu_\ptbeta)&\leq & \exp(-\alpha(\ptbeta)t)\Ent(m_0\vert\nu_\ptbeta)\eqn
(where $\alpha(\ptbeta)$ is defined in \eqref{alpbet}).
The bounds \eqref{hypo2} and \eqref{hypo3} are not directly comparable, since they concern different distributions, namely $m_{0}P_{t+\tau^{(\beta_\ptbeta)}}^{(\ptbeta)}$
and $m_{0}P_{t}^{(\ptbeta)}$ and the former is not just a deterministic time translate through $P$ of the latter.
Nevertheless, to highlight the potential advantage of \eqref{hypo2}, let us make the following observation. Let $X^{(\ptbeta)}\df(X^{(\ptbeta)}_t)_{t\geq 0}$ be a diffusion process associated to the Markov semigroup $P^{(\ptbeta)}$, with small $\ptbeta\in(0,1/2)$,
starting with $X^{(\ptbeta)}_0$ uniformly distributed over $[0,1]$.
We want to use this trajectory to sample according to $\nu_\ptbeta$, with an accuracy given by $\delta>0$ in the entropy sense.
Relying on \eqref{hypo3}, we consider the position $X^{(\ptbeta)}_{T_1}$ at  the time $T_1\geq 0$ such that
\bq
 \exp(-\alpha(\ptbeta)T_1)\Ent(m_0\vert\nu_\ptbeta)&\leq & \delta
 \eq
 for some $\delta>0$.
 Letting $\ptbeta$ going to $0_+$ and recalling that $\alpha(\ptbeta)\sim 4/\ln (1/\ptbeta) $,
 we easily compute  that
 \bq
 \Ent(m_0\vert\nu_\ptbeta)&=&\int_0^1\ln(\Gamma(\ptbeta)x^{1-\ptbeta}\exp(x))\, dx
 =\ln(\Gamma(\ptbeta))+(1-\ptbeta)\int_0^1\ln(x)\,dx+\int_0^1x\, dx\\
 &=&\ln(\Gamma(\ptbeta))+\ptbeta-1/2
 \sim \ln(\Gamma(\ptbeta))\\
 &\sim& \ln(1/\ptbeta)\eq
 So we get that
 \bq
 T_1&\simeq& \ln(1/\ptbeta)\ln(\ln(1/\ptbeta)/\delta)/4
 =\ln(1/\ptbeta)(\ln(\ln(1/\ptbeta)+\ln(1/\delta))/4
 \eq
 Relying on \eqref{hypo2},
 we consider the position $X^{(\ptbeta)}_{T_2+T_3}$, where  $T_2$ is independent from $X^{(\ptbeta)}$ and has the same law than $\tau_\ptbeta$
 and $T_3\geq 0$
is such that
\bq
 \exp(-T_3)\Ent(m_0\vert\nu_\ptbeta)&\leq & \delta\eq
 namely
 \bq
 T_3&\simeq& \ln(\ln(1/\ptbeta)/\delta)\eq
 To get a rough idea of $T_2$, let us compute its expectation, as $\ptbeta$ goes to zero:
 \bq
 \EE[T_2]&=&\int_0^{+\iy} s\, \P(\tau^{(\beta_{\varepsilon})} \in ds)= \frac{\Gamma(1/2)}{\Gamma(1/2-\ptbeta)\Gamma(\ptbeta)}\int_0^{+\iy} s\exp(-\ptbeta s)(1-\exp(-s))^{-\ptbeta-1/2}\, ds\\
 &=& -\frac{\Gamma(1/2)}{\Gamma(1/2-\ptbeta)\Gamma(\ptbeta)}\int_0^1\ln( r) r^{\ptbeta-1}(1-r)^{-\ptbeta-1/2}\, dr\\
 &\sim& -\frac{1}{\Gamma(\ptbeta)}\int_0^1 \ln (r) r^{\ptbeta-1}\, dr
 =\frac{1}{\Gamma(\ptbeta+1)}\int_0^1r^{\ptbeta-1}\, dr
  =\frac{1}{\ptbeta\Gamma(\ptbeta+1)}\\
  &\sim& \frac1\ptbeta
 \eq
 (where an integration by parts was used for the fourth equality), and thus
 \bq
 \EE[T_2]+T_3&\simeq&  \frac1\ptbeta+ \ln(1/\delta)\eq
 \par
 When $\delta>0$ is very small, e.g.\ of order $\exp(-1/\ptbeta)$, the quantity $ \EE[T_2]+T_3$ is much smaller than $T_1$,
 suggesting that the approach based on \eqref{hypo2} is a more effcient sampling procedure.
 \par\me
 Similar observations are also valid for hyperboundedness, as we deduce from Theorem \ref{theo2}:
 \begin{cor}
 For any $\ptbeta\in(0,1/2)$, we have
 \bqn{hyperbound}
\fo t\geq 0,\qquad  \vvv P^{(\ptbeta)}_{t+\tau^{(\beta_\ptbeta)}}\vvv_{\esL^2(\nu_\ptbeta)\ri\esL^{p( t)}(\nu_\ptbeta)}&\leq & 1\eqn
where
\bq
\fo t\geq 0,\qquad p(t)&\df&1+\exp(t) \eq
 \end{cor}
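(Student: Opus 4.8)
The plan is to deduce \eqref{hyperbound} from Theorem~\ref{theo2} in exactly the same way that the preceding corollary \eqref{hypo2} was deduced from Theorem~\ref{theo1}: the only inputs are the symmetric interweaving relation of Proposition~\ref{complmonot} and the value $\alpha(1/2)=1$.

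First I would specialize the interweaving relation summarized in Figure~\ref{fig5} to $\gdbeta=1/2$, so that $\ptbeta\in(0,1/2)$ and $\deltabeta=1/2-\ptbeta$. Reading that relation from the ``$P^{(\ptbeta)}$ side'', i.e.\ with intertwining kernel $\LaLad\st\esL^2(\nu_\ptbeta)\ri\esL^2(\nu_{1/2})$ applied first and $\LaLa\st\esL^2(\nu_{1/2})\ri\esL^2(\nu_\ptbeta)$ applied second, Proposition~\ref{complmonot} gives $\Gate{P^{(\ptbeta)}}{\LaLad}{P^{(1/2)}}$, $\Gate{P^{(1/2)}}{\LaLa}{P^{(\ptbeta)}}$ and $\LaLad\LaLa=P^{(\ptbeta)}_{\tau^{(\beta_\ptbeta)}}$, where $\tau^{(\beta_\ptbeta)}$ is the random warm-up time whose law is given by \eqref{pibetabeta} for these parameters. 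In particular $P^{(\ptbeta)}\stackrel{\tau^{(\beta_\ptbeta)}}{\looparrowleft}P^{(1/2)}$, which is precisely the one-sided interweaving relation required by Theorem~\ref{theo2} (the complementary identity $\LaLa\LaLad=P^{(1/2)}_{\tau^{(\beta_\ptbeta)}}$ is not needed here).

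Second, I would record that $\wi P\df P^{(1/2)}$ is hypercontractive with the sharp exponent $p(t)=1+e^{t}$. By Bakry's $\Gamma_2$ computation recalled in Remark~\ref{rem:lsclag}, the logarithmic Sobolev constant of $L_{1/2}$ is $\alpha(1/2)=1$, and hence, by the Gross equivalence between the logarithmic Sobolev inequality and hypercontractivity already invoked for the Laguerre semigroups in Subsection~\ref{cadLp}, the semigroup $P^{(1/2)}$ satisfies \eqref{hypercon} with $\wi\alpha=\alpha(1/2)=1$, for which the exponent there equals $p(\wi\alpha t)=1+e^{t}=p(t)$.

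Finally I would apply Theorem~\ref{theo2} with $P=P^{(\ptbeta)}$, $\wi P=P^{(1/2)}$, warm-up time $\wu=\tau^{(\beta_\ptbeta)}$ and $\wi\alpha=1$: its conclusion \eqref{eqtheo2} is then verbatim \eqref{hyperbound}. There is no genuine obstacle; the only points to verify are that the invariant probabilities entering Theorem~\ref{theo2} are the gamma laws $\nu_{1/2}$ and $\nu_\ptbeta$ already appearing in Figure~\ref{fig5} (so that $\wi\nu=\nu\LaLa$), and the elementary bookkeeping $\alpha(1/2)=1\Rightarrow p(\wi\alpha t)=p(t)$.
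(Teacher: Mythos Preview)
Your proposal is correct and follows exactly the paper's approach: the corollary is stated without an explicit proof, as a direct consequence of Theorem~\ref{theo2} applied with the symmetric interweaving relation of Proposition~\ref{complmonot} specialized to $\gdbeta=1/2$ and the value $\alpha(1/2)=1$. One trivial slip in your final parenthetical: since your first kernel is $\LaLad$, the compatibility condition should read $\wi\nu=\nu\LaLad$ rather than $\wi\nu=\nu\LaLa$, which is immediate from duality.
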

Note that for small $\ptbeta >0$ and large $t\geq 0$, the exponent $ p(t)$ is much larger than $1+\exp(\alpha(\ptbeta)t)$,
the quantity one gets via the
 traditional application of the logarithmic Sobolev associated to $L_\ptbeta$.
 Thus up to waiting a warm-up time variable $\tau^{(\beta_\ptbeta)})$, the
 hyperboundedness estimate \eqref{hyperbound} is more interesting than the usual hypercontractive bound.

\subsection{The Jacobi processes }
We proceed with another important and classical example in the theory of diffusions which is the Jacobi semigroup ${\rm{J}}^{(\beta)}=({\rm{J}}^{(\beta)}_t)_{t\geq 0}$.  Its infinitesimal generator  is defined for a  function $f \in {\rm{C}}^{2}(V)$, the space of twice continuously differentiable functions on $V=[0,1]$, by
\begin{equation}\label{J_infg}
J_{\beta} [f](x) = x(1-x)f''(x) +(\lambda_1 -\beta -\lambda_1 x)f'(x),  \quad x\in [0,1],
\end{equation}
where $ \lambda_1 \geq  2 \beta > 2$ and refer here and below to \cite[Section 5]{CPVS} for a thorough review of the Jacobi semigroup.

It admits as unique invariant measure ${\nu}_{\beta}$, the distribution of a beta $B(\lambda_1,\beta)$ random variable, defined on
$(0,1)$ as
\[ {\nu}_{\beta}(dx)=\frac{\Gamma(\lambda_1)}{\Gamma(\lambda_1-\beta)\Gamma(\beta)}x^{\beta-1}(1-x)^{\lambda_1-\beta- 1}dx, \quad 0<x<1. \]
As a by-product, the H\"older inequality yields that  ${\rm{J}}^{(\beta)}$ extends to a contraction semigroup from the Hilbert space $\esL^2({\nu}_{\beta})$ into itself.
We recall that for any $n\in \mathbb{N}$,
\[ \int_0^{\infty}x^n{\nu}_{\beta}(dx)=\frac{\Gamma(\lambda_1)}{\Gamma(\beta)}\frac{\Gamma(n+\beta)}{\Gamma(n+\lambda_1)}. \]
We say that the Jacobi operator is symmetric when $\lambda_1=  2 \beta$ and, in this case,  we write $\wi{ \rm{J}}=({ \rm{ \wi J}}_t)_{t\geq 0}$ for the symmetric Jacobi semigroup whose infinitesimal generator  is $\wi J = J_{\frac{\lambda_1}{2}} $ that is
\[ \wi J [f](x)  =x(1-x)f''(x) +\frac{\lambda_1}{2}(1- 2x)f'(x), \quad 0<x<1.\]
We remark that, when $\frac{\lambda_1}{2} = n \in \N$, there exists a homeomorphism between $J_\beta$ and the radial part of the Laplace-Beltrami operator on the $n$-sphere, which leads to the curvature-dimension condition $CD(\lambda_1 -1,\lambda_1 )$, see \cite{bakry:2014} for the definition. We deduce from \cite[Proposition 3.6]{CPVS}, choosing in the notation thereout $\mu=\frac{\lambda_1}{2}$ and $\hbar\equiv0$, the following interweaving relation between the symmetric and other Jacobi semigroups.
\begin{pro}
 For any $\lambda_1>2\beta>1$, we have
  \begin{equation*}
   \wi{ \rm{J}} \stackrel{\tau^{(\lambda_1,\beta)}}{\leftrightsquigarrow}{\rm{J}}^{(\beta)}  
\end{equation*}
with
\bqn{Fbb}
\fo u\in\RR_+,\qquad
  \int_{0}^{\infty}e^{-us}\P(\tau_{\phi}^{(\lambda_1,\beta)} \in ds)=\frac{\Gamma(\lambda_1-\beta)\Gamma(\rho(u)+\frac{\lambda_1}{2})}{\Gamma(\rho(u)+\lambda_1-\beta)\Gamma(\frac{\lambda_1}{2})} 
\eqn
where $\rho(u)=\sqrt{u+\frac{(\lambda_1-1)^2}{4}}-\frac{\lambda_1-1}{2}.$
\end{pro}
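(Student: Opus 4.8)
\proof
The plan is to follow the scheme of the proof of \refprop{complmonot}, the only genuinely new feature being that the spectrum of the Jacobi generator is quadratic, rather than linear, in the polynomial degree, which is exactly what the function $\rho$ records. I would first record the one-sided intertwining: applying \cite[Proposition 3.6]{CPVS} with $\mu=\tfrac{\lambda_1}{2}$ and $\hbar\equiv 0$ produces an explicit Markov kernel $\Lambda$ realizing $\Gate{\wi{\rm J}}{\Lambda}{{\rm J}^{(\beta)}}$, which, read on the associated $\esL^2$-spaces, is a bounded operator $\Lambda\st\esL^2({\nu}_{\beta})\ri\esL^2({\nu}_{\lambda_1/2})$ with $\wi{\rm J}_t\Lambda=\Lambda\,{\rm J}^{(\beta)}_t$ for every $t\geq 0$. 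Since ${\rm J}^{(\beta)}$ and $\wi{\rm J}$ are self-adjoint in $\esL^2({\nu}_{\beta})$ and $\esL^2({\nu}_{\lambda_1/2})$ respectively, passing to the adjoint gives $\Gate{{\rm J}^{(\beta)}}{\Lambda^*}{\wi{\rm J}}$, so one may take $\wi\Lambda=\Lambda^*$; it then remains to identify $\Lambda\Lambda^*$ and $\Lambda^*\Lambda$ and to check they are of the form $\wi{\rm J}_{\tau^{(\lambda_1,\beta)}}$ and ${\rm J}^{(\beta)}_{\tau^{(\lambda_1,\beta)}}$ for a common non-negative random variable $\tau^{(\lambda_1,\beta)}$.

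Next I would diagonalize. The operator $\Lambda\Lambda^*$ commutes with every $\wi{\rm J}_t$; recall that $-\wi J$ is diagonalizable on $\esL^2({\nu}_{\lambda_1/2})$ with simple eigenvalues $\delta_n\df n(n+\lambda_1-1)$, $n\in\ZZ_+$, and Jacobi polynomial eigenfunctions, and that these eigenvalues do not depend on the shape parameter, so the same holds for $-J_\beta$ on $\esL^2({\nu}_{\beta})$. By functional calculus $\Lambda\Lambda^*=F(-\wi J)$ and $\Lambda^*\Lambda=F(-J_\beta)$ for a common measurable $F\st\RR_+\ri\RR$, where, as in \refprop{complmonot}, $F(\delta_n)$ factors as $\wi F(n)\,\wit F(n)$ with $\wi F(n),\wit F(n)$ the leading-coefficient multipliers of $\Lambda$ and $\Lambda^*$ on the monomial $p_n\st x\mapsto x^n$. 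Applying $\Lambda$ and $\Lambda^*$ to $p_n$, reading off the coefficient of $x^n$ and using the beta moments $\int_0^1 x^n\,{\nu}_{\beta}(dx)=\tfrac{\Gamma(\lambda_1)\Gamma(n+\beta)}{\Gamma(\beta)\Gamma(n+\lambda_1)}$, I expect to obtain $F(\delta_n)=\tfrac{\Gamma(\lambda_1-\beta)\,\Gamma(n+\frac{\lambda_1}{2})}{\Gamma(\frac{\lambda_1}{2})\,\Gamma(n+\lambda_1-\beta)}$. Since $\delta_n+\tfrac{(\lambda_1-1)^2}{4}=(n+\tfrac{\lambda_1-1}{2})^2$, we have $\rho(\delta_n)=n$ for every $n$, so $F$ coincides on the spectrum with $u\mapsto\tfrac{\Gamma(\lambda_1-\beta)\,\Gamma(\rho(u)+\frac{\lambda_1}{2})}{\Gamma(\frac{\lambda_1}{2})\,\Gamma(\rho(u)+\lambda_1-\beta)}$, the announced expression.

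Finally I would check that this $F$ is completely monotone, so that by Bochner's theorem it is the Laplace transform $\int_0^\infty e^{-us}\,\P(\tau^{(\lambda_1,\beta)}\in ds)$ of a positive measure, which is a probability measure because $\rho(0)=0$ forces $F(0)=1$. This is where the hypothesis $\lambda_1>2\beta$ enters: since $\lambda_1-\beta>\tfrac{\lambda_1}{2}$, the map $v\mapsto\tfrac{\Gamma(v+\lambda_1/2)}{\Gamma(v+\lambda_1-\beta)}$ is completely monotone on $\RR_+$ (it is, up to a constant, the Laplace transform of the probability density proportional to $e^{-(\lambda_1/2)s}(1-e^{-s})^{\lambda_1/2-\beta-1}$ on $(0,\infty)$, cf.~\eqref{pibetabeta}), while $\rho(u)=\sqrt{u+\tfrac{(\lambda_1-1)^2}{4}}-\tfrac{\lambda_1-1}{2}$ is a Bernstein function, and the composition of a completely monotone function with a Bernstein function is completely monotone; so $\tau^{(\lambda_1,\beta)}$ is, in law, a $\rho$-subordination of a beta-type variable. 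This gives $\wi{\rm J}\stackrel{\tau^{(\lambda_1,\beta)}}{\looparrowleft}{\rm J}^{(\beta)}$ with $\Lambda\wi\Lambda=\wi{\rm J}_{\tau^{(\lambda_1,\beta)}}$. To upgrade to the symmetric interweaving relation, I would note that $\Lambda$ is one-to-one on $\esL^2({\nu}_{\beta})$: it maps each Jacobi polynomial $\mathcal{J}_n^{(\beta)}$ to $\wi F(n)\,\mathcal{J}_n^{(\lambda_1/2)}$, and $\wi F(n)\neq 0$ for every $n$ since $\wi F(n)\,\wit F(n)=F(\delta_n)>0$; hence the $\esL^2$-version of \refprop{symmetric} applies and gives $\Lambda^*\Lambda={\rm J}^{(\beta)}_{\tau^{(\lambda_1,\beta)}}$, consistently with $\Lambda^*\Lambda=F(-J_\beta)$, and therefore the stated symmetric interweaving. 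The main obstacle is the complete-monotonicity step, i.e.\ recognizing $\rho$ as a Bernstein function and the gamma ratio as a Laplace transform; this is precisely where $\lambda_1>2\beta$ is used, and it is the structural reason why the warm-up time is genuinely random here rather than deterministic.
\wwtbp
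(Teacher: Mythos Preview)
Your proposal is correct and follows essentially the same route as the paper: both invoke \cite[Proposition 3.6]{CPVS} with $\mu=\tfrac{\lambda_1}{2}$, $\hbar\equiv 0$ for the intertwining, and your explicit unpacking via the template of \refprop{complmonot} (adjoint for the reverse kernel, leading-coefficient analysis on monomials, the observation $\rho(\delta_n)=n$, and complete monotonicity via composition of the gamma-ratio Laplace transform with the Bernstein function $\rho$) is exactly the argument behind that cited result. The paper simply defers the details to the reference, whereas you have spelled them out.
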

As a self-adjoint operator $J_\beta$ has nice spectral properties: its spectrum is discrete with simple eigenvalues  given by the set  $(-n(n-1)-\lambda_1 n)_{n\geq0}$.  Moreover, it satisfies certain functional inequalities which give some quantitative rates of convergence to the equilibrium measure ${\nu}_{\beta}$. For instance, from the Poincar\'e inequality for $J_\beta$, see \cite[Chapter 4.2]{bakry:2014}, one gets the following variance decay estimate, valid for any $f \in \esL^2({\nu}_{\beta})$ and $t \geq 0$,
\begin{equation*}
\Var_{{\nu}_{\beta}}({\rm{J}}^{(\beta)}_t [f] ) \leq e^{-2\lambda_1t} \Var_{{\nu}_{\beta}}(f),
\end{equation*}
where  for a measure $\nu$, we have set $\textrm{Var}_{\nu}\left(f\right)=||f-\nu f||^2_{\L^2(\nu)}$. Next, note,  writing \begin{equation}
\bar{J}_{\beta} [f](x) = (1-x^2)f''(x)+ (\lambda_1-2\beta-\lambda_1 x )f'(x)
\end{equation}
and $g(x)=\frac{x+1}{2}$, that
\begin{eqnarray*}
\bar{J}_{\beta} [f \circ g] (g^{-1}(x))&=&x(1-x)f''(x)+(\lambda_1 -\beta -\lambda_1 x)f'(x)
   = J_{\beta} [f](x).
\end{eqnarray*}
Then, the log-Sobolev constant being invariant by homeomorphism, one gets, from Saloff-Coste \cite{Saloff_Jacobi}, see also Fontenas \cite{fontenas:1998}, that the log-Sobolev constant $\alpha\left(\lambda_1,\beta\right)$ of the Jacobi operator $J_{\beta}$ is such that
\begin{equation}
\label{eq:log-Sobolev}
 \alpha\left(\lambda_1,\frac{\lambda_1}{2}\right)=\frac{\lambda_1}{2} \end{equation}
for the symmetric Jacobi and otherwise, $ \alpha\left(\lambda_1,\beta\right)<\frac{\lambda_1}{2}$  for $\lambda_1>2\beta$, with for any fixed $\beta$ and large $\lambda_1$, $\alpha\left(\lambda_1,\beta\right)\sim \frac{\lambda_1}{4} $.
Since always $\alpha\left(\lambda_1,\beta\right) \leq 2\lambda_1$,  we thus get, from \eqref{eq:log-Sobolev}, that the symmetric Jacobi semigroup attains the optimal entropic decay and hypercontractivity rate. We point out that the explicit expression of the log-Sobolev constant for the symmetric case goes back to Barky in \cite{MR1417973}. Although the log-Sobolev constant is not attainable in the other cases, the interweaving relation described above combined with theorems \ref{theo1} and \ref{theo2} enable us to provide the following information regarding the non-symmetric Jacobi semigroups.
\begin{pro}
 For any $\lambda_1>2\beta>1$, $m_0\in\cP((0,1))$ and $t\geq0$, we have
\bq
\Ent( {m}_0{\rm{J}}^{(\beta)}_{t+\tau^{(\lambda_1,\beta)}}\vert\nu_{\beta})&\leq &  e^{-\frac{\lambda_1}{2} t}  \: \Ent( {m}_0\vert\nu_{\beta})
 \eq
 and
 \begin{equation}
\label{eq:hyper}
\vvv {\rm{J}}^{(\beta)}_{t+\tau^{(\lambda_1,\beta)}}\vvv_{\esL^2(\nu)\ri\esL^{p(t)}} \leq 1  \textrm{ where }   p(t) =1+e^{ \frac{\lambda_1}{2}  t}.
\end{equation}
\end{pro}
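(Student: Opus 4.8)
The plan is to obtain both inequalities by applying Theorems \ref{theo1} and \ref{theo2} to the interweaving relation $\wi{ \rm{J}}\stackrel{\tau^{(\lambda_1,\beta)}}{\leftrightsquigarrow}{\rm{J}}^{(\beta)}$ of the previous proposition, using the symmetric Jacobi semigroup $\wi{ \rm{J}}$ as the reference semigroup, whose ergodic behaviour is governed by the known log-Sobolev constant \eqref{eq:log-Sobolev}.

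First I would unpack what the symmetric interweaving relation yields: Markov kernels $\Lambda$ and $\wi\Lambda$ with $\Cmir{\wi{ \rm{J}}}{\Lambda}{{\rm{J}}^{(\beta)}}{\wi\Lambda}$, $\Lambda\wi\Lambda=\wi{ \rm{J}}_{\tau^{(\lambda_1,\beta)}}$ and $\wi\Lambda\Lambda={\rm{J}}^{(\beta)}_{\tau^{(\lambda_1,\beta)}}$. The last two identities say in particular that ${\rm{J}}^{(\beta)}\stackrel{\tau^{(\lambda_1,\beta)}}{\looparrowleft}\wi{ \rm{J}}$, with intertwining kernels $\wi\Lambda$ and $\Lambda$, which is precisely the direction needed to transport ergodic estimates from $\wi{ \rm{J}}$ to ${\rm{J}}^{(\beta)}$. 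Since both semigroups are ergodic with unique invariant probabilities ($\nu_\beta$ for ${\rm{J}}^{(\beta)}$ and $\nu_{\lambda_1/2}$ for $\wi{ \rm{J}}$), the compatibility requirement of Subsection \ref{sec:appl} on the pushed-forward invariant measures is automatic, so we are in the setting of Theorems \ref{theo1} and \ref{theo2}.

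Next I would record the ergodic input for the reference semigroup. By \eqref{eq:log-Sobolev}, $\wi J=J_{\lambda_1/2}$ satisfies the logarithmic Sobolev inequality with constant $\lambda_1/2$, and, being a diffusion, this yields on the one hand the exponential entropy decay
\begin{equation*}
\fo \wi m_0\in\cP((0,1)),\ \fo t\geq 0,\qquad \Ent(\wi m_0\wi{ \rm{J}}_t\vert\nu_{\lambda_1/2})\ \leq\ e^{-\frac{\lambda_1}{2}t}\,\Ent(\wi m_0\vert\nu_{\lambda_1/2}),
\end{equation*}
that is \eqref{vareps} with $\varepsilon(t,E)=e^{-\frac{\lambda_1}{2}t}E$, and on the other hand, through Gross's equivalence between the log-Sobolev inequality and hypercontractivity, the bound \eqref{hypercon} with $\wi\alpha=\lambda_1/2$.

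Finally, feeding $\varepsilon(t,E)=e^{-\frac{\lambda_1}{2}t}E$ into Theorem \ref{theo1} gives $\Ent(m_0{\rm{J}}^{(\beta)}_{t+\tau^{(\lambda_1,\beta)}}\vert\nu_\beta)\leq e^{-\frac{\lambda_1}{2}t}\Ent(m_0\vert\nu_\beta)$, and Theorem \ref{theo2} with $\wi\alpha=\lambda_1/2$ gives $\vvv{\rm{J}}^{(\beta)}_{t+\tau^{(\lambda_1,\beta)}}\vvv_{\esL^2(\nu_\beta)\ri\esL^{p(t)}(\nu_\beta)}\leq 1$ with $p(t)=1+e^{\frac{\lambda_1}{2}t}$, which are exactly the two asserted estimates. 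I do not expect a genuine obstacle here: the only points that need a little care are the direction of the interweaving relation (settled by symmetry above) and checking that the normalisation of the log-Sobolev constant in \eqref{eq:log-Sobolev} is the one under which entropy decays at rate $\lambda_1/2$ and the hypercontractive exponent is $1+e^{\frac{\lambda_1}{2}t}$ — the same convention already used for the Laguerre family in Subsection \ref{cadLp}.
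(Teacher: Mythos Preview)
Your proposal is correct and follows exactly the approach indicated in the paper: the proposition is obtained by combining the symmetric interweaving relation $\wi{\rm J}\stackrel{\tau^{(\lambda_1,\beta)}}{\leftrightsquigarrow}{\rm J}^{(\beta)}$ of the preceding proposition with Theorems~\ref{theo1} and~\ref{theo2}, using the log-Sobolev constant $\alpha(\lambda_1,\lambda_1/2)=\lambda_1/2$ of the symmetric Jacobi semigroup as input. Your additional care about the direction of the interweaving and the compatibility of invariant measures is appropriate but already implicit in the paper's framework.
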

We close this example by mentioning that in \cite{CPVS} interweaving relations are established between the symmetric Jacobi semigroup and a class of non-local and non-self-adjoint Markov semigroups on the unit interval $[0,1]$. 

\subsection{The non-self-adjoint generalized Laguerre semigroups} \label{jLp}
In this part, we illustrate that the concept of \cmir\ is also useful in the context of non-reversible and non-local Markov semigroups.
More specifically, let $P=(P_t)_{t\geq0}$ be the generalized Laguerre semigroup as introduced and thoroughly studied in \cite{Patie-Savov-GeL}. We also refer to this paper for further details about the objects that will be introduced in this part.
It can be characterized through its infinitesimal generator which takes the form, for a function $f$ smooth,
\begin{equation*}
{{L}}_{\phi}[f](x) =  xf''(x)+\lb m+1 -x\rb f'(x) + \int^{\infty}_{0} \left(f(e^{-y}x)-f(x)\right)\Pi(x,dy), \quad x>0,
\end{equation*}
where $m \geq 0$ and $\Pi(x,dy)=\frac{\Pi(dy)}{x}$ with $\Pi$  a finite non-negative Radon measure on $\R^+$ with a finite first moment, that is ${\rm{\overline{\Pi}}}=\int_{0}^{\infty}y\Pi(dy)<\infty$.
Observe that, writing $p_n(x)=x^n, x>0$, $n\in \N$, an integration by parts yields
\begin{equation*}
{{L}}_{\phi}[p_n](x) = n \phi(n) p_{n-1}(x) -n p_n(x),
\end{equation*}
where, for $u\geq0$, we have set
\begin{equation}\label{eq:def-b}
   \phi(u)=u+ m+  \int^{\infty}_{0} (e^{-uy}-1)\Pi(u,dy).
\end{equation}


Note that $\phi$ is a Bernstein function and it is in fact the Laplace exponent of the descending ladder height process $\xi=(\xi_t)_{t\geq 0}$ of the spectrally negative L\'evy process with Laplace exponent $u\phi(u)$, see e.g.~\cite[Sec.~6.5.2]{Kyprianou2014}.
$P$ admits  an unique invariant measure  which is an absolutely continuous probability measure with a density denoted by $\nu$.   Its law is determined by its integer moments which are given, for any $n\in \N$, by
 \[\int_0^{\infty}x^n \nu_{\phi}(x)dx=W_{\phi}(n+1)\]
 where $W_{\phi}(1)=1$ and $W_{\phi}(n+1)=\prod_{k=1}^{n} \phi(k)$.  $P$ extends to a non-self-adjoint strongly continuous contraction semigroup on ${\rm{L}}^{2}(\nu_{\phi})$.  Next, let $ \wi{P}^{(\beta)}=( \wi{P}^{(\beta)}_t)_{t\geq0}$ denotes  the semigroup of the classical Laguerre process of index $\beta\geq 0$ (or dimension $\beta +1$) and recall from Section \ref{cadLp} that its generator is the differential operator
\begin{equation*}
{L_{\beta+1}[f](x)} =   xf''(x)+\lb \beta +1 -x\rb f'(x), \quad x>0.
\end{equation*}
$ \wi{P}^{(\beta)}$ is a self-adjoint operator on ${\rm{L}}^{2}(\nu_{\beta})$ where here, for sake of simplicity, we write $\nu_{\beta}(dx)=\frac{x^{\beta}}{\Gamma(\beta+1)}e^{-x}dx,x>0$. We disregard  the parameter $\beta$ when it is $0$, that is we simply write $\wi P=\wi P^{(0)}$ and $\nu=\nu_{0}$.

Now, according to \cite{Patie-Savov-GeL}, there exists a multiplicative Markov kernel ${\rm{I}}_{\phi}$ defined by
\begin{equation}\label{eq:def-L}
  {\rm{I}}_{\phi} [f](x)=\E\left[f(xI_{\phi})\right], \quad x>0,
\end{equation}
where $I_{\phi}=\int_{0}^{\infty}e^{-\xi_t}dt$ with $\xi$ the subordinator with Laplace exponent the Bernstein function $\phi$ and,  for any $n\in \N$,
\begin{equation}\label{eq:L-pn}
  {\rm{I}}_{\phi} [p_n](x)=\frac{\Gamma(n+1)}{W_{\phi}(n+1)} p_n(x), \quad x>0.
\end{equation}
We also introduce for any $\beta>0$, the Markov kernel $ {\rm{B}}^*_{\beta}$, acting on any bounded Borelian function $f$ via
\begin{equation}\label{eq:V-pn}
  {\rm{B}}^*_{\beta} [f](x)= \frac{x^{\beta}}{\Gamma(\beta)}\int_0^{\infty}f((1+y)x)y^{\beta-1}e^{-yx}dy,\quad  x>0.
\end{equation}
We are ready to state and proof the following.
\begin{pro}\label{prop:cmirPQ}
For any $\beta>\overline{\Pi} +m$, we have \[\cont{P} \stackrel{\tau^{(\beta)}}{\leftrightsquigarrow}  \wi{P}^{(\beta)} \] where $\tau^{(\beta)}$  is an infinitely divisible variable characterized by
 \begin{equation}\label{eq:Lt-tau-beta}
    \int_{0}^{\infty}e^{-us}\P(\tau^{(\beta)} \in ds)=\left(\frac{\Gamma(1+\beta)\Gamma(u+1)}{\Gamma(u+\beta+1)}\right)=e^{-\phi_{\beta}(u)t}, \qquad u>0.
  \end{equation}
  In particular,  $\P(\tau^{(\beta)} \in ds)= (1+\beta)(1+\log s)^{\beta}ds, s\in (1/e,1)$.
  Moreover, for any such $\beta$,
we have 
\begin{equation}\label{eq:int-pqb}
 \Lambda={\rm{I}}_{\phi} {\rm{B}}^*_{\beta} \textrm{ and } \wi{\Lambda}={\rm{V}}_{\beta}
 \end{equation}
where ${\rm{V}}_{\beta}$ is a Markov kernel associated to the variable $Y_\beta$ whose distribution is determined  by its moments given by,  for any $n\in \N$,
 \begin{equation}\label{eq:V-pn}
  {\rm{V}}_{\beta}[p_n](x)=\E\left[p_n(xY_{\beta})\right]=\Gamma(1+\beta)\frac{W_{\phi}(n+1)}{\Gamma(n+1+\beta)} p_n(x), \quad x>0.
\end{equation}

 Finally, we have
  for any $t \geq 0$ and $m_0\in\cP((0,+\infty))$,
\begin{equation}
\label{eq:entr}
\Ent(m_0{P}_{t+\tau^{(\beta)}}\vert\nu_{\phi}) \leq e^{- t} \Ent(m_0\vert\nu_{\phi}),
\end{equation}
and
\begin{equation}
\label{eq:hyper}
\vvv P_{t+\tau^{(\beta)}}\vvv_{{\esL^2(\nu_{\phi})\ri\esL^{p(t)}(\nu_{\phi})}} \leq 1  \textrm{ where }   p(t)= 1+e^{ t}.
\end{equation}
  \end{pro}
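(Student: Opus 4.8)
The plan is to realise $\Lambda$ and $\wi\Lambda$ as compositions of two elementary intertwining kernels --- one coming from the theory of generalized Laguerre semigroups of \cite{Patie-Savov-GeL}, the other a classical Laguerre intertwining --- then to read off the warm-up law by a functional-calculus argument, and finally to feed the resulting interweaving relation into Theorems~\ref{theo1} and \ref{theo2}.

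First I would record the two building blocks, all identities being checked on the monomials $p_n(x)=x^n$, which span a core for the operators involved (the invariant measures $\nu_\phi,\nu_\beta$ being moment-determinate, \cite{Patie-Savov-GeL}). On one side, $\Gate{\cont{P}}{\mathrm{I}_\phi}{\wi{P}}$ with $\wi P=\wi P^{(0)}$, as in \cite{Patie-Savov-GeL}; at the generator level $L_\phi\mathrm{I}_\phi=\mathrm{I}_\phi L_{1}$, which follows from $L_\phi[p_n]=n\phi(n)p_{n-1}-np_n$, $L_1[p_n]=n^2p_{n-1}-np_n$, \eqref{eq:L-pn} and $W_\phi(n+1)=\phi(n)W_\phi(n)$. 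On the other side, $\mathrm{B}^*_\beta$ is a Markov kernel (its defining measure has total mass $1$, by the substitution $u=yx$) and $\Gate{\wi{P}^{(0)}}{\mathrm{B}^*_\beta}{\wi{P}^{(\beta)}}$: this is the $\esL^2$-adjoint of the $\mathrm{Beta}(1,\beta)$-multiplication intertwining of classical Laguerre semigroups already used in Subsection~\ref{dLo}, and again reduces to an identity on $p_n$ via $\mathrm{B}^*_\beta[p_n]=\sum_{k=0}^n\binom{n}{k}\frac{\Gamma(k+\beta)}{\Gamma(\beta)}p_{n-k}$ and $L_{\beta+1}[p_n]=n(n+\beta)p_{n-1}-np_n$. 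Composing gives a Markov kernel $\Lambda:=\mathrm{I}_\phi\mathrm{B}^*_\beta$ with $\Gate{\cont{P}}{\Lambda}{\wi{P}^{(\beta)}}$, as in \eqref{eq:int-pqb}. For the reverse direction, $\mathrm{V}_\beta$ is multiplicative with $\mathrm{V}_\beta[p_n]=c_np_n$, $c_n=\Gamma(1+\beta)\frac{W_\phi(n+1)}{\Gamma(n+1+\beta)}$, and it is a genuine Markov kernel precisely because $\beta>\overline{\Pi}+m$ makes $(c_n)_{n\ge0}$ a Stieltjes moment sequence, by \cite{Patie-Savov-GeL}; the relation $\Gate{\wi{P}^{(\beta)}}{\mathrm{V}_\beta}{\cont{P}}$ amounts on monomials to $L_{\beta+1}\mathrm{V}_\beta=\mathrm{V}_\beta L_\phi$, i.e. to the recursion $c_n/c_{n-1}=\phi(n)/(n+\beta)$, immediate from $W_\phi(n+1)=\phi(n)W_\phi(n)$.

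Next I would identify the warm-up time. Since $\Gate{\cont{P}}{\Lambda}{\wi{P}^{(\beta)}}$ and $\Gate{\wi{P}^{(\beta)}}{\wi\Lambda}{\cont{P}}$, the operator $\Lambda\wi\Lambda=\mathrm{I}_\phi\mathrm{B}^*_\beta\mathrm{V}_\beta$ commutes with every $P_t$; as $\cont{P}$ is ergodic and $L_\phi$ has simple spectrum $-\N$ with polynomial eigenfunctions $\mathcal P_n$ of degree $n$, functional calculus gives $\Lambda\wi\Lambda=F(-L_\phi)$ with $F(n)$ the eigenvalue of $\Lambda\wi\Lambda$ on $\mathcal P_n$. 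Computing that eigenvalue on the leading monomial of $\mathcal P_n$, via $\mathrm{V}_\beta[p_n]=c_np_n$, $\mathrm{B}^*_\beta[p_n]=p_n+(\text{lower degree})$ and $\mathrm{I}_\phi[p_n]=\frac{n!}{W_\phi(n+1)}p_n$, yields $F(n)=\frac{\Gamma(1+\beta)\,n!}{\Gamma(n+1+\beta)}=\beta\int_0^1 r^n(1-r)^{\beta-1}\,dr$. Its analytic continuation $F(u)=\frac{\Gamma(1+\beta)\Gamma(u+1)}{\Gamma(u+\beta+1)}=\beta\int_0^1 r^u(1-r)^{\beta-1}\,dr$ is the Laplace transform of a probability measure (total mass $F(0)=1$), so there is a non-negative $\tau^{(\beta)}$ with the Laplace transform \eqref{eq:Lt-tau-beta} and $\Lambda\wi\Lambda=\int_0^\infty P_s\,\P(\tau^{(\beta)}\in ds)$, which is \eqref{warm-up}; the explicit law follows by Laplace inversion (concretely $e^{-\tau^{(\beta)}}$ is $\mathrm{Beta}(1,\beta)$-distributed). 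Infinite divisibility follows from the fact that $\phi_\beta(u):=\log\Gamma(u+\beta+1)-\log\Gamma(u+1)-\log\Gamma(1+\beta)$ is a Bernstein function: $\phi_\beta(0)=0$ and $\phi_\beta'(u)=\int_0^\infty e^{-ut}\frac{e^{-t}-e^{-(\beta+1)t}}{1-e^{-t}}\,dt$ is completely monotone. Thus $\cont{P}\stackrel{\tau^{(\beta)}}{\looparrowleft}\wi{P}^{(\beta)}$; and since $\Lambda=\mathrm{I}_\phi\mathrm{B}^*_\beta$ is one-to-one ($\mathrm{I}_\phi$ carries the Laguerre basis of $\esL^2(\nu)$ onto the complete system $(\mathcal P_n)$ in $\esL^2(\nu_\phi)$ by \cite{Patie-Savov-GeL}, and $\mathrm{B}^*_\beta$ is the adjoint of the one-to-one, dense-range $\mathrm{Beta}(1,\beta)$-multiplication kernel), the $\esL^2$-version of Proposition~\ref{symmetric} (cf. Remark~\ref{1errems}(a) and Theorem~\ref{thm:ref-cmir}) upgrades this to $\wi\Lambda\Lambda=\wi P^{(\beta)}_{\tau^{(\beta)}}$, i.e. $\cont{P}\stackrel{\tau^{(\beta)}}{\leftrightsquigarrow}\wi{P}^{(\beta)}$.

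Finally, $\wi{P}^{(\beta)}$ is the classical Laguerre semigroup with invariant gamma measure $\nu_\beta$, and since $\beta>\overline{\Pi}+m\ge0$ forces $\beta+1\ge\frac12$, its log-Sobolev constant equals $\alpha(\beta+1)=1$ by Lemma~\ref{alpbet} and Remark~\ref{rem:lsclag}; hence \eqref{vareps} holds with $\varepsilon(t,E)=e^{-t}E$ and \eqref{hypercon} holds with $\wi\alpha=1$, so $p(t)=1+e^{t}$. Feeding the interweaving relation just obtained into Theorem~\ref{theo1} and Theorem~\ref{theo2} (the compatibility $\nu_\beta=\nu_\phi\Lambda$, $\nu_\phi=\nu_\beta\wi\Lambda$ being automatic by uniqueness of the invariant measures) gives \eqref{eq:entr} and \eqref{eq:hyper}, completing the proof. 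The crux is not any individual computation but the functional-analytic bookkeeping forced by the non-self-adjointness of $\cont{P}$: one needs completeness of the generalized Laguerre eigen-polynomials in $\esL^2(\nu_\phi)$ to license both the identification $\Lambda\wi\Lambda=P_{\tau^{(\beta)}}$ and the injectivity of $\Lambda$, and one needs the sharp constraint $\beta>\overline{\Pi}+m$ to know that $\mathrm{V}_\beta$ is Markovian --- both supplied by \cite{Patie-Savov-GeL}.
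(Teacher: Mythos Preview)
Your proof is correct and follows essentially the same approach as the paper's: build $\Lambda=\mathrm{I}_\phi\mathrm{B}^*_\beta$ by composing the generalized-Laguerre intertwining with the adjoint Beta-multiplication intertwining, take $\wi\Lambda=\mathrm{V}_\beta$, identify the warm-up law by computing the action of $\Lambda\wi\Lambda$ on the leading monomials of the eigenpolynomials $\mathcal P_n$, upgrade to a symmetric interweaving via injectivity of $\Lambda$, and then invoke Theorems~\ref{theo1} and~\ref{theo2} with the log-Sobolev constant $\alpha(\beta+1)=1$. The only cosmetic differences are that you verify the elementary intertwinings directly on monomials (the paper cites \cite{Patie-Savov-GeL} and \cite{MR1654531}) and give an explicit argument for infinite divisibility of $\tau^{(\beta)}$ (the paper asserts this as well known).
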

\begin{proof}
First, we recall from \cite[Theorem 7.1]{Patie-Savov-GeL} that   the following intertwining relationship
\begin{equation}\label{eq:int1}
 P_t {\rm{I}}_{\phi}  = {\rm{I}}_{\phi}  \wi{P}_t, \quad  t\geq0,
 \end{equation}
holds  in ${\rm{L}}^{2}(\nu)$. 
Next, \cite[Proposition 4.4]{Patie-Savov-GeL} entails that, for any $\beta>
\overline{\Pi} +m$, $\phi_{\beta}(u)=\frac{\phi(u)}{u+\beta}$ is a Bernstein function and there exists a Markov kernel $V_\beta$ associated to the positive random variable $Y_{\beta}$ whose   moments are given by \eqref{eq:V-pn} and  determined its law.
Moreover, from Lemma 10.2 of the aforementioned paper, we have, in ${\rm{L}}^{2}(\nu_{\phi})$, the following  identity
\begin{equation}\label{eq:int2}
  \wi{P}^{(\beta)}_t  {\rm{V}}_{\beta}   =  {\rm{V}}_{\beta} P_t, \quad t\geq 0.
 \end{equation}
Then, invoking either \cite[Identity (1.c)]{MR1654531} or  again \cite[Theorem 7.1]{Patie-Savov-GeL}, we have  in ${\rm{L}}^{2}(\varepsilon)$
\begin{equation*}
    \wi{P}^{(\beta)}_t {\rm{B}}_{\beta}   =  {\rm{B}}_{\beta}  \wi{P}_t.
 \end{equation*}
Taking the adjoint, in the weighted Hilbert space, intertwining identity and using the fact that $\wi{P}$ (resp.~$\wi{P}^{(\beta)}_t$) is self-adjoint in ${\rm{L}}^{2}(\nu)$ (resp.~${\rm{L}}^{2}(\nu_{\beta})$) yields
  in ${\rm{L}}^{2}(\nu_{\beta})$
\begin{equation}\label{eq:int3}
    \wi{P}_t {\rm{B}}^*_{\beta}   =  {\rm{B}}^*_{\beta}  \wi{P}^{(\beta)}_t
 \end{equation}
Combining this with  \eqref{eq:int1} entails that in ${\rm{L}}^{2}(\nu_{\beta})$
\begin{equation*}
 P_t {\rm{I}}_{\phi} {\rm{B}}^*_{\beta}  = {\rm{I}}_{\phi}\wi{P}  {\rm{B}}^*_{\beta} = {\rm{I}}_{\phi} {\rm{B}}^*_{\beta}  \wi{P}^{(\beta)}_t.
 \end{equation*}
  Finally, this combines with the intertwining relationship \eqref{eq:int2} yields the identity in ${\rm{L}}^{2}(\nu_{\phi})$
\begin{equation}\label{eq:intlast}
 P_t {\rm{I}}_{\phi} {\rm{B}}^*_{\beta}   {\rm{V}}_{\beta}    = {\rm{I}}_{\phi_{\d}} {\rm{B}}^*_{\beta}  \wi{P}^{(\beta)}_t   {\rm{V}}_{\beta} = {\rm{I}}_{\phi_{d}} {\rm{B}}^*_{\beta} {\rm{V}}_{\beta}P_t
 \end{equation}
 and
\begin{equation}\label{eq:intlast}
  \wi{P}_t^{(\beta)} {\rm{V}}_{\beta}{\rm{I}}_{\phi} {\rm{B}}^*_{\beta} = {\rm{V}}_{\beta}{\rm{I}}_{\phi} {\rm{B}}^*_{\beta} \wi{P}_t^{(\beta)}.
 \end{equation}
 Since from \cite[Theorem 7.1(2) and Lemma 8.16]{Patie-Savov-GeL}, we have that ${\rm{I}}_{\phi}$ and ${\rm{B}}^*_{\beta}$ are one-to-one in ${\rm{L}}^{2}(\nu)$ and ${\rm{L}}^{2}(\nu_{\beta})$  respectively, we get that their composition  ${\rm{I}}_{\phi} {\rm{B}}^*_{\beta}$ is also one-to-one in ${\rm{L}}^{2}(\nu_{\beta})$.
 Thus,  it remains to show that $ P_{\tau}={\rm{I}}_{\phi} {\rm{B}}^*_{\beta}   {\rm{V}}_{\beta}$ or, by Theorem \ref{theo3b}, equivalently $\wi{P}_\tau^{(\beta)}={\rm{V}}_{\beta}{\rm{I}}_{\phi} {\rm{B}}^*_{\beta} $. To justify the latter identity, we proceed  as in the proof of Proposition \ref{complmonot}, we have from \cite[Theorem 1.22(c)]{Patie-Savov-GeL}, that, for any $t>0$, the spectrum of $P_t$ in ${\rm{L}}^{2}(\nu_{\phi})$ is discrete and given by $e^{-t\N}$ and each eigenvalue is simple with for all $n\in \N$,
\[P_t [\mathcal{P}_n](x)=e^{-nt}\mathcal{P}_n(x)\]
where the polynomials $\mathcal{P}_n$ are defined via the identity $\mathcal{P}_n(x)={\rm{I}}_{\phi} [\mathcal{L}_n](x)$,  $(\mathcal{L}_n)_{n\geq 0}$ being the orthonormal sequence of Laguerre polynomials. Thus, we deduce from \eqref{eq:intlast} that
\begin{equation*}
 P_t {\rm{I}}_{\phi} {\rm{B}}^*_{\beta}   {\rm{V}}_{\beta}[\mathcal{P}_n] =  {\rm{I}}_{\phi} {\rm{B}}^*_{\beta} {\rm{V}}_{\beta}P_t [\mathcal{P}_n] =e^{-nt}{\rm{I}}_{\phi} {\rm{B}}^*_{\beta} {\rm{V}}_{\beta}[\mathcal{P}_n],
 \end{equation*}
that is ${\rm{I}}_{\phi} {\rm{B}}^*_{\beta}   {\rm{V}}_{\beta}[\mathcal{P}_n]$ is proportional to $\mathcal{P}_n$.
 More specifically, recalling that for any $n\in \N$,
\[{\rm{B}}^*_{\beta} [p_n](x)=p_n(x)+ P_{n-1}(x) \]
where here and below $ P_{n-1}(x)$ stands for a generic polynomial of order $n-1$, we deduce  from \eqref{eq:L-pn} and \eqref{eq:V-pn} that
\begin{eqnarray}
  {\rm{V}}_{\beta} {\rm{I}}_{\phi} {\rm{B}}^*_{\beta}  [ p_n](x) &=& \frac{\Gamma(n+1+\d)}{\Gamma(1+\d)W_{\phi}(n+1)} \Gamma(1+\beta)\frac{W_{\phi}(n+1)}{\Gamma(n+1+\beta)} p_n(x)+ P_{n-1}(x) \\
   &=& \frac{\Gamma(1+\beta) \Gamma(n+1+\d) }{\Gamma(1+\d) \Gamma(n+1+\beta)} p_n(x)+ P_{n-1}(x)
\end{eqnarray}
and hence
\begin{eqnarray}
  {\rm{I}}_{\phi} {\rm{B}}^*_{\beta} {\rm{V}}_{\beta}  [\mathcal{P}_n] (x)
   &=& \frac{\Gamma(1+\beta) \Gamma(n+1+\d) }{\Gamma(1+\d) \Gamma(n+1+\beta)}  \mathcal{P}_n(x).
\end{eqnarray}
On the other hand, it is well known that  $\phi_\beta(u)=-\log\frac{\Gamma(1+\beta) \Gamma(u+1+\d) }{\Gamma(1+\d) \Gamma(n+1+\beta)} $ is a Bernstein function which corresponds to the Laplace exponent of the positive infinitely divisible variable  $\tau^{(\beta)}=-\log B_{\beta}$, where $B_{\beta}$ is  a beta variable of parameter $\beta>0$.
 Finally, since $\beta>\overline{\Pi} +m>0$,  one gets that the log-Sobolev constant of the classical Laguerre ${P}^{(\beta)}$ is $1$, see Remark \ref{rem:lsclag}. We complete the proof by invoking   theorems \ref{theo1} and \ref{theo2}.
\end{proof}

\subsubsection{Subordinate generalized Laguerre semigroups}
It is well-known, see e.g.~\cite{bakry:2014}, that, for any $t>0$,  $ \wi{P}^{(\beta)}_t$ is an Hilbert-Schmidt operator in ${\rm{L}}^{2}(\nu_{\beta})$ that  admits, for any $f \in  {\rm{L}}^{2}(\nu_{\beta})$, the diagonalization
\begin{equation}\label{eq:seK}
 \wi{P}^{(\beta)}_t [f] = \sum_{n=0}^{\infty}e^{-n t} \mathfrak{c}_{n}(\beta) \langle f, \mathcal{L}^{(\beta)}_n \rangle_{\nu_{\beta}}\:  \mathcal{L}^{(\beta)}_n
\end{equation}
where  the sequence of Laguerre polynomials $(\sqrt{\mathfrak{c}_{n}(\beta)}\mathcal{L}^{(\beta)}_n)_{n\geq 0}$ forms an orthonormal basis of ${\rm{L}}^{2}(\nu_{\beta})$ and we recall that
\begin{equation*}
\mathcal{L}^{(\beta)}_n(x)=\sum_{r=0}^n (-1)^r { n +\beta \choose n-r}  \frac{x^r }{r!}
\end{equation*}
and $\mathfrak{c}_{n}(\beta)=\frac{\Gamma(n+1) \Gamma(\beta+1)}{\Gamma(n+\beta+1)}$.
Moreover, a classical argument based on the spectral theory of reversible compact Markov semigoups yields, for any $t\geq 0$ and $f\in {\rm{L}}^{2}(\nu_{\beta})$, the spectral gap estimate
 \begin{equation}\label{eq:inv_Lag}
  \textrm{Var}_{\nu_{\beta}}\left( \wi{P}^{(\beta)}_t[f]\right)\leq e^{-t} \: \textrm{Var}_{\nu_{\beta}}\left(f\right)
 \end{equation}
where, we recall that for a measure $\nu$, we have set $\textrm{Var}_{\nu}\left(f\right)=||f-\nu [f]||^2_{\L^2(\nu)}$. Let us denote by $\wi{P}^{\tau^{(\beta)}}=(\wi{P}^{\tau^{(\beta)}}_t)_{t\geq 0}$  the Bochner subordination of $ \wi{P}^{(\beta)}$ by the subordinator $(\tau^{(\beta)}_t)_{t\geq 0}$ where $\tau^{(\beta)}_1$ has the same law than the positive infinitely divisible variable $\tau^{(\beta)}$ defined in Proposition \ref{prop:cmirPQ} and use the same notation for the subordinated semigroup ${P}^{\tau^{(\beta)}}$.

\begin{cor}
For any $\beta >0$, $t>0$,  $\wi P^{\tau^{(\beta)}}_t$ is a self-adjoint Hilbert-Schmidt operator in $\esL^2(\nu_{\beta})$ that  admits, for any $f \in  \esL^2(\nu_{\beta})$, the diagonalization
\begin{equation}\label{eq:seKwi}
\wi{P}^{\tau^{(\beta)}}_t [f] = \sum_{n=0}^{\infty} \mathfrak{c}^{t+1} _{n}(\beta) \: \langle f, \mathcal{L}^{(\beta)}_n \rangle_{\nu_{\beta}}\:  \mathcal{L}^{(\beta)}_n
\end{equation}
and
\begin{equation}\label{eq:inv_Lag}
  {\rm{Var}}_{\nu_{\beta}}\left( \wi{P}^{\tau^{(\beta)}}_t[f]\right)\leq (1+\beta)^{-t} \: {\rm{Var}}_{\nu_{\beta}}\left(f\right)
 \end{equation}
Moreover, for any $\beta> \overline{\Pi} +m$,  $\cont{P}^{\tau^{(\beta)}} \stackrel{1}{\leftrightsquigarrow}  \wi{P}^{\tau^{(\beta)}}$ and for any $f \in {\rm{L}}^{2}(\nu)$ and $t>1$, we have in  ${\rm{L}}^{2}(\nu)$
\begin{equation}\label{eq:spe}
  P^{\tau^{(\beta)}}_{t} [f]=\sum_{n=0}^{\infty}   \mathfrak{c}^{t} _{n}(\beta) \:  \langle f, \mathcal{V}_n \rangle_{\nu} \: \mathcal{P}_n
\end{equation}
and for any $t \geq 0$
\begin{equation}\label{eq:inv_GLag}
  {\rm{Var}}_{\nu}\left( {P}^{\tau^{(\beta)}}_t[f]\right)\leq (1+\beta)^{1-t}  \: {\rm{Var}}_{\nu}\left(f\right)
 \end{equation}
\end{cor}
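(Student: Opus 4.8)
The plan is to establish the four assertions in turn, in each case reducing to the corresponding property of the classical Laguerre semigroup $\wi{P}^{(\beta)}$ by Bochner subordination together with the symmetric interweaving relation of Proposition~\ref{prop:cmirPQ}. Recall that $\wi{P}^{(\beta)}_s$ is self-adjoint on $\esL^2(\nu_\beta)$ with $\wi{P}^{(\beta)}_s[\mathcal{L}^{(\beta)}_n]=e^{-ns}\mathcal{L}^{(\beta)}_n$, and that $(\sqrt{\mathfrak{c}_n(\beta)}\,\mathcal{L}^{(\beta)}_n)_{n\geq 0}$ is an orthonormal basis of $\esL^2(\nu_\beta)$. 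Since $\wi{P}^{\tau^{(\beta)}}_t=\int_0^\infty \wi{P}^{(\beta)}_s\,\P(\tau^{(\beta)}_t\in ds)$ is an average of self-adjoint operators against a probability measure it is self-adjoint, and from the eigenfunction property one gets
\[
\wi{P}^{\tau^{(\beta)}}_t[\mathcal{L}^{(\beta)}_n]=\Big(\int_0^\infty e^{-ns}\,\P(\tau^{(\beta)}_t\in ds)\Big)\mathcal{L}^{(\beta)}_n=e^{-t\phi_\beta(n)}\mathcal{L}^{(\beta)}_n=\mathfrak{c}_n(\beta)^t\,\mathcal{L}^{(\beta)}_n ,
\]
where we used $e^{-\phi_\beta(u)}=\Gamma(1+\beta)\Gamma(u+1)/\Gamma(u+\beta+1)$ and $\mathfrak{c}_n(\beta)=\Gamma(n+1)\Gamma(\beta+1)/\Gamma(n+\beta+1)$. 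Expanding an arbitrary $f$ in the orthonormal basis and passing back to the (non-normalised) $\mathcal{L}^{(\beta)}_n$'s yields \eqref{eq:seKwi}, and the eigenvalues $(\mathfrak{c}_n(\beta)^t)_n$ being square-summable gives the Hilbert-Schmidt property. Finally, since $\mathfrak{c}_0(\beta)=1>\mathfrak{c}_1(\beta)=(1+\beta)^{-1}\geq\mathfrak{c}_n(\beta)$ for $n\geq 1$, the restriction of $\wi{P}^{\tau^{(\beta)}}_t$ to the orthogonal complement of the constants has norm $(1+\beta)^{-t}$, whence $\Var_{\nu_\beta}(\wi{P}^{\tau^{(\beta)}}_t f)\leq(1+\beta)^{-2t}\Var_{\nu_\beta}(f)$, a fortiori \eqref{eq:inv_Lag}.

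Fix now $\beta>\overline{\Pi}+m$. Proposition~\ref{prop:cmirPQ} provides $\cont{P}\stackrel{\tau^{(\beta)}}{\leftrightsquigarrow}\wi{P}^{(\beta)}$ with $\tau^{(\beta)}$ infinitely divisible, realised by the one-to-one kernel $\Lambda={\rm{I}}_{\phi}{\rm{B}}^*_{\beta}$ and by $\wi\Lambda={\rm{V}}_{\beta}$. Integrating the two intertwinings $\Gate{\cont{P}}{\Lambda}{\wi{P}^{(\beta)}}$ and $\Gate{\wi{P}^{(\beta)}}{\wi\Lambda}{\cont{P}}$ against $\P(\tau^{(\beta)}_t\in ds)$ shows that the same kernels satisfy $\Gate{\cont{P}^{\tau^{(\beta)}}}{\Lambda}{\wi{P}^{\tau^{(\beta)}}}$ and $\Gate{\wi{P}^{\tau^{(\beta)}}}{\wi\Lambda}{\cont{P}^{\tau^{(\beta)}}}$, while the two identities of Proposition~\ref{prop:cmirPQ} subordinate to $\Lambda\wi\Lambda=P_{\tau^{(\beta)}}=P^{\tau^{(\beta)}}_1$ and $\wi\Lambda\Lambda=\wi{P}^{(\beta)}_{\tau^{(\beta)}}=\wi{P}^{\tau^{(\beta)}}_1$; this is precisely $\cont{P}^{\tau^{(\beta)}}\stackrel{1}{\leftrightsquigarrow}\wi{P}^{\tau^{(\beta)}}$ (the one-sided part is also Theorem~\ref{theo3b}, and the symmetry also follows from the $\esL^2$-version of Proposition~\ref{symmetric} since $\Lambda$ is one-to-one).

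For the spectral expansion \eqref{eq:spe} I would argue as in the proof of Proposition~\ref{prop:cmirPQ}. The eigenfunctions $\mathcal{P}_n={\rm{I}}_{\phi}[\mathcal{L}_n]$ of $P$ satisfy $P_s[\mathcal{P}_n]=e^{-ns}\mathcal{P}_n$, hence $P^{\tau^{(\beta)}}_t[\mathcal{P}_n]=\mathfrak{c}_n(\beta)^t\mathcal{P}_n$, and dually the associated biorthogonal family $(\mathcal{V}_n)$, with $\langle\mathcal{P}_m,\mathcal{V}_n\rangle_\nu=\delta_{mn}$, satisfies $(P^{\tau^{(\beta)}}_t)^*[\mathcal{V}_n]=\mathfrak{c}_n(\beta)^t\mathcal{V}_n$. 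For $t>1$ write $P^{\tau^{(\beta)}}_t=P^{\tau^{(\beta)}}_1 P^{\tau^{(\beta)}}_{t-1}=\Lambda\,\wi{P}^{\tau^{(\beta)}}_{t-1}\,\wi\Lambda$, using $P^{\tau^{(\beta)}}_1=\Lambda\wi\Lambda$ and the intertwining $\wi\Lambda P^{\tau^{(\beta)}}_{t-1}=\wi{P}^{\tau^{(\beta)}}_{t-1}\wi\Lambda$; applying $\wi{P}^{\tau^{(\beta)}}_{t-1}$ in the form \eqref{eq:seKwi} and then the bounded operator $\Lambda$, which maps the Laguerre basis of $\esL^2(\nu_\beta)$ into the family $(\mathcal{P}_n)$, exhibits $P^{\tau^{(\beta)}}_t f$ as a series in the $\mathcal{P}_n$'s converging in $\esL^2(\nu)$, whose $n$-th coefficient equals, by biorthogonality, $\langle P^{\tau^{(\beta)}}_t f,\mathcal{V}_n\rangle_\nu=\mathfrak{c}_n(\beta)^t\langle f,\mathcal{V}_n\rangle_\nu$; this is \eqref{eq:spe}. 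The one step that is not purely formal, and the place I would expect to need most care, is the convergence of this biorthogonal series: it rests on the mapping and completeness properties of the non-orthogonal system $(\mathcal{P}_n,\mathcal{V}_n)$ established in \cite{Patie-Savov-GeL}, the smoothing factor $\Lambda\wi\Lambda=P^{\tau^{(\beta)}}_1$ being what places the range of $P^{\tau^{(\beta)}}_t$, for $t>1$, in the domain where the expansion converges.

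Finally, for \eqref{eq:inv_GLag}, suppose first $t\geq 1$. From the factorisation $P^{\tau^{(\beta)}}_t=\Lambda\,\wi{P}^{\tau^{(\beta)}}_{t-1}\,\wi\Lambda$ and the fact that the Markov operators $\Lambda$ (with $\nu\Lambda=\nu_\beta$) and $\wi\Lambda$ (with $\nu_\beta\wi\Lambda=\nu$) do not increase the variance (Jensen's inequality),
\[
\Var_\nu\big(P^{\tau^{(\beta)}}_t f\big)\leq\Var_{\nu_\beta}\big(\wi{P}^{\tau^{(\beta)}}_{t-1}\wi\Lambda f\big)\leq(1+\beta)^{-(t-1)}\,\Var_{\nu_\beta}(\wi\Lambda f)\leq(1+\beta)^{1-t}\,\Var_\nu(f),
\]
the middle inequality being \eqref{eq:inv_Lag} applied at time $t-1\geq 0$. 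For $0\leq t<1$, $P^{\tau^{(\beta)}}_t$ is itself a Markov operator leaving $\nu$ invariant, so $\Var_\nu(P^{\tau^{(\beta)}}_t f)\leq\Var_\nu(f)\leq(1+\beta)^{1-t}\Var_\nu(f)$ since $(1+\beta)^{1-t}\geq 1$. This proves \eqref{eq:inv_GLag} for all $t\geq 0$ and completes the argument.
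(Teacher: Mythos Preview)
Your proof is correct and follows essentially the same route as the paper: subordinate the Laguerre spectral expansion to get \eqref{eq:seKwi}, invoke Theorem~\ref{theo3b} (or equivalently integrate the intertwinings against $\P(\tau^{(\beta)}_t\in ds)$) for the interweaving with warm-up time $1$, and then use the factorisation $P^{\tau^{(\beta)}}_{t}=\Lambda\,\wi P^{\tau^{(\beta)}}_{t-1}\,\wi\Lambda$ together with $\Lambda_\beta\mathcal{L}^{(\beta)}_n=\mathcal{P}_n$ and $\mathcal{V}_n={\rm V}^*_\beta\mathcal{L}^{(\beta)}_n$ to obtain \eqref{eq:spe}. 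The only notable difference is in \eqref{eq:inv_GLag}: the paper simply invokes Theorem~\ref{theo10} with $\varphi(x)=x^2-1$, whereas you unfold that argument directly via Jensen's inequality for the Markov kernels $\Lambda,\wi\Lambda$ and treat the range $0\leq t<1$ separately; this is the same idea made explicit.
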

\begin{proof}
The fact that  $\wi P^{\tau^{(\beta)}}$ is self-adjoint in $\esL^2(\nu_{\beta})$   can easily be checked  by means of Fubini theorem as, for any non-negative $f,g \in {\rm{L}}^{2}(\nu_{\beta})$ and $t\geq 0$,
\begin{eqnarray*}
  \langle \wi P^{\tau^{(\beta)}}_t[f], g \rangle_{\nu_{\beta}} &=& \int_{0}^{\infty} \langle \wi P^{{(\beta)}}_s [f], g \rangle_{\nu_{\beta}} \P(\tau^{(\beta)}_t \in ds) =  \int_{0}^{\infty} \langle f,  \wi P^{{(\beta)}}_s [g] \rangle_{\nu_{\beta}} \P(\tau^{(\beta)}_t \in ds)\\
  &=& \langle  f,  \wi P^{\tau^{(\beta)}}_t [g] \rangle_{\nu_{\beta}}
\end{eqnarray*}
where we used that $\wi P^{{(\beta)}}$ is self-adjoint in $\esL^2(\nu_{\beta})$.
Next, one has
that for any $f \in  \esL^2(\nu_{\beta})$, the diagonalization
\begin{eqnarray*}
\wi P^{\tau^{(\beta)}}_t [f] &=&  \int_{0}^{\infty}  \P(\tau^{(\beta)}_t \in ds) \wi P^{{(\beta)}}_s [f] \\
& = &   \int_{0}^{\infty} \P(\tau^{(\beta)}_t \in ds) \sum_{n=0}^{\infty}e^{-s n} \mathfrak{c}_{n}(\beta) \: \langle f, \mathcal{L}^{(\beta)}_n \rangle_{\nu_{\beta}}\:  \mathcal{L}^{(\beta)}_n \\
& = &    \sum_{n=0}^{\infty}\left(\frac{\Gamma(1+\beta)\Gamma(n+1)}{\Gamma(n+\beta+1)}\right)^{t} \mathfrak{c}_{n}(\beta) \: \langle f, \mathcal{L}^{(\beta)}_n \rangle_{\nu_{\beta}}\:  \mathcal{L}^{(\beta)}_n
\end{eqnarray*}
where  we used \eqref{eq:seK} in the second equality and to conclude we combined the identity \eqref{eq:Lt-tau-beta}, the Stirling formula that yields  that for  $n$ large enough
\begin{equation}\label{eq:stirl}
\mathfrak{c}_{n}(\beta)=\frac{\Gamma(n+1) \Gamma(\beta+1)}{\Gamma(n+\beta+1)} \sim \Gamma(\beta+1)  n^{-\beta}
\end{equation}
 with the fact that $ \wi{P}^{(\beta)}_t$ is closed as an Hilbert-Schmidt operator. Next, using the interweaving relation described in Proposition \ref{prop:cmirPQ} combined with Theorem \ref{theo3b} since $\tau^{(\beta)}$ is infinitely divisible, we get that for any $\beta> \overline{\Pi} +m$,  $\cont{P}^{\tau^{(\beta)}} \stackrel{1}{\leftrightsquigarrow}  \wi{P}^{\tau^{(\beta)}}$. From this relation,
 we deduce  that, for any $f \in  \esL^2(\nu)$ and $t>0$,
 \begin{eqnarray}
 P^{\tau^{(\beta)}}_{t+1}[f]    &=& P^{\tau^{(\beta)}}_{t} \Lambda_\beta   {\rm{V}}_{\beta}[f] =  \Lambda_\beta \wi P^{\tau^{(\beta)}}_{t}   {\rm{V}}_{\beta}[f] \\
    &=& \Lambda_\beta \sum_{n=0}^{\infty}\left(\frac{\Gamma(1+\beta)\Gamma(n+1)}{\Gamma(n+\beta+1)}\right)^{t} \mathfrak{c}_{n}(\beta) \: \langle {\rm{V}}_{\beta} [f], \mathcal{L}^{(\beta)}_n \rangle_{\nu_{\beta}}\:  \mathcal{L}^{(\beta)}_n\\
   &=& \sum_{n=0}^{\infty}  \mathfrak{c}^{t} _{n}(\beta) \mathfrak{c}_{n}(\beta) \:  \langle f, \mathcal{V}_n \rangle_{\nu} \: \mathcal{P}_n
 \end{eqnarray}
 where $\mathcal{V}_n = {\rm{V}}^*_{\beta} \mathcal{L}^{(\beta)}_n$  and $\Lambda_\beta \mathcal{L}^{(\beta)}_n = {\rm{I}}_{\phi} {\rm{B}}^*_{\beta}  \mathcal{L}^{(\beta)}_n={\rm{I}}_{\phi} \mathcal{L}_n=\mathcal{P}_n(x)$, which completes the proof of the spectral expansion
 of $P^{\tau^{(\beta)}}_{t}f$ for $t>1$. The last claim follows from the interweaving relation with warm-up time $1$ and an application of Theorem \ref{theo10} below by choosing $\varphi(x)=x^2-1$   \end{proof}

\section{Proofs of the main results} 
\label{poTaP}

In the following subsections, we prove the main results about \cmirs\  announced in the introduction.

\subsection{Proof of the results from section \ref{sec:basic_prop}}
\subsubsection{Proof of Theorem \ref{theo3b}}
Here we consider warm-up distributions which are infinitely divisible  distributions and we construct via subordination other  interweaved Markov semigroups  which  brought us back to the situation of deterministic warm-up times, thus showing Theorem \ref{theo3b}.  More precisely, assume that $\boldsymbol{\wu}$ is infinitely divisible. Then there exists a unique convolution semigroup on $\R^+$ which determines  the transition kernel of the subordinator $(\wu_t)_{t\geq 0}$ where $\wu_1 \stackrel{(d)}{=}\boldsymbol{\wu}$.
Given a Markov semigroup $P$,
define the family of Markov operators $Q\df(Q_t)_{t\geq 0}$ via
\bqn{Q}
\nonumber\fo t\geq 0,\qquad Q_t&\df&P_{\wu_t}
=\int_{\RR_+} P_s\, \wu_t(ds)\eqn
$Q$ is the subordination of $P$  in the sense of Bochner and it is also a Markov semigroup, see e.g.~\cite[Chap.~12]{schilling:2012}. 
Similarly, given another Markov semigroup $\wi P$,
define the  Markov  semigroup $\wi Q\df(\wi Q_t)_{t\geq 0}\df (\wi P_{\wu_t})_{t\geq 0}$.
\par
As in Theorem \ref{theo3b}, assume an \cmir\  holds between the semigroups $P$ and $\wi P$ with warm-up distribution $\boldsymbol{\wu}$, that is  $\cont{P} \stackrel{\boldsymbol{\wu}}{\looparrowleft} \wi{P}$.
Denote by $\Lambda$ and $\wi\Lambda$ the corresponding Markov kernels between the underlying state spaces $V$ and $\wi V$
Then Figure \ref{fig1} leads to the following diagram for all $t\geq 0$.
 \begin{figure}[H]\centering
\begin{tikzcd}[scale=2]
V\arrow[r, "Q_t"] \arrow[d, "\Lambda"'] \arrow[dd, bend right=50 , "Q_1"']
& V\arrow[d, "\Lambda" ]  \arrow[dd, bend left=50 , "Q_1"]\\
\wi V\arrow[r, "\wi Q_t" ]\arrow[d, "\wi\Lambda"']
&\wi V\arrow[d, "\wi\Lambda" ]\\
V\arrow[r, "Q_t"]
& V
\end{tikzcd}
\caption{Intertwining relations for $Q$ and $\wi Q$}\label{fig7}
\end{figure}
\par
Indeed, by definition, we have $\Lambda\wi\Lambda =P_{\boldsymbol{\wu}}=Q_1$ and for any $t\geq 0$, we get
\bq
Q_t\Lambda&=&\int_{\RR_+} P_s\, \wu_t(ds)\Lambda\\
&=&\int_{\RR_+} P_s\Lambda\, \wu_t(ds)\\
&=&\int_{\RR_+} \Lambda \wi P_s\, \wu_t(ds)\\
&=&\Lambda \wi Q_t\eq
Similarly, we have
\bq
\fo t\geq 0,\qquad \wi Q_t\wi \Lambda&=&\wi\Lambda Q_t\eq
and this ends the proof of Theorem \ref{theo3b}.
\par



\subsubsection{Proof of Theorem \ref{thm:ref-cmir}}\label{ecte}
  The first claim is obvious. Next,  if $\cont{P} \stackrel{\tau}{\looparrowleft} \wi{P}   $ with $ \Cmir{\cont{P}}{{\Lambda}}{\wi{P}  }{\wi{\Lambda}}$, then, clearly $\Cmir{\wi{P}  }{\wi{\Lambda}}{\cont{P}}{{\Lambda}}$. Moreover, since  Markovian intertwining relationship is stable by mixture with a positive measure, we get that
    that $\Gate{\cont{P^\tau}}{{\Lambda}}{\cont{Q^\tau}}$ and as $P^{\tau}={\Lambda \wi{\Lambda}}$, we get
     \[{\Lambda} \left({\rm VI}-Q^\tau\right)=0\]
     which  concludes the proof of \eqref{it:T1} by an injectivity argument. Next, if  $ \Cmir{\cont{P}}{{\Lambda}}{\wi{P}  }{\wi{\Lambda}}$ and  $\Cmir{\wi{P}  }{\overline{\rm V}}{\overline{\cont{P}}}{\overline{\Lambda}}$ then $\Cmir{\cont{P}}{{\Lambda}\overline{\rm V}}{\overline{\cont{P}}}{\overline{\Lambda}\wi{\Lambda}}$. Moreover,   we have
      \[ {\Lambda}\overline{\rm V}  \: \overline{\Lambda}\wi{\Lambda}={\Lambda}Q^{\overline{\tau}}\wi{\Lambda}={\Lambda}\wi{\Lambda}{P}^{\overline{\tau}}=P^{\tau}{P}^{\overline{\tau}}= F(L)\overline{F}(L) \]
      where we used successively that $\wi{P}   \stackrel{\overline{\tau}}{\looparrowleft} \overline{P} $, $ \Gate{\wi{P}  ^{\tau}}{\wi{\Lambda}}{\cont{P}^{\tau}}$ which itself follows as above from $ \Gate{\wi{P}  }{\wi{\Lambda}}{\cont{P}}$,  $\cont{P} \stackrel{\tau}{\looparrowleft} \wi{P}   $  and the last identity sets a notation.  To complete the proof we observe that the product $F\overline{F}$ is the Laplace transform of the sum of the independent random variables $\tau+\overline{\tau}$.


\subsubsection{Proof of Theorem \ref{thm:transp}}
 First, by  since $ \Cmir{\cont{P}}{{\Lambda}}{\wi{P}  }{\wi{\Lambda}}$ and   $\Gate{{P}}{V}{{P^V}}$ and  $\Gate{{P^V}}{V^{-1}}{{P}}$, we easily deduce that  $ \Cmir{\cont{P}^V}{{V \Lambda}}{\wi{P}  }{\wi{\Lambda}V^{-1}}$ and we conclude the proof of the first item by observing that  $V \Lambda \wi{\Lambda}V^{-1}=  V\cont{P}_{\tau} V^{-1}=\cont{P}^V_{\tau}$.
Next,  the identities \eqref{eq:intpq}, \eqref{eq:iLi} and the second gateway in \eqref{eq:cmir}  yield
      \[\Lambda  \discret{P} \discret{I} = P \Lambda \discret{I}  =  P {\rm{I}} \Lambda = {\rm{I}} Q \Lambda =  {\rm{I}} \Lambda  \wi{\discret{P}}  =  \Lambda \discret{I} \wi{\discret{P}}\]
    and the injectivity of $\Lambda$ gives that $\Gate{\discret{P}}{\discret{I}}{\wi{\discret{P}}}$. On can interchange the role of $\discret{P}$ and $\wi{\discret{P}}$ in the previous sequence of identities to conclude that $\Cmir{\discret{P}}{{\discret{I}}}{\wi{\discret{P}}}{{\discret{V}}}$.
    Next, as above, by  stability of intertwining relation by mixture, we get
    that $\Gate{\cont{P^\tau}}{\Lambda}{\discret{P^\tau}}$ and hence $\Lambda \discret{P}^{\tau}=\cont{P}^\tau\Lambda= {\Lambda \wi{\Lambda}} \Lambda = \Lambda {\discret{IV}}$ which concludes the proof by invoking the injectivity of $\Lambda$.

\subsection{Extensions and proofs  of the results from Section \ref{sec:appl}}
\subsubsection{Proof of Theorem \ref{theo1}}\label{ecte}

Here we extend the statement of Theorem  \ref{theo1} by considering (relative) $\varphi$-entropies.\par
Let $\varphi\st \RR_+\ri\RR_+$ be a convex function such that $\varphi(1)=0$.
The (relative) $\varphi$\textbf{-entropy} of two probability measures $m$ and $\nu$ defined on the same state space is given by
\bq
\Ent_{\varphi}(m\vert\nu)&\df&
 \int \varphi\lt(\frac{dm}{d\nu}\rt)\, d\nu+\lt(1-\int \frac{dm}{d\nu}\, d\nu\rt)\lim_{x\ri+\iy}\frac{\varphi(x)}{x}
\eq
where $dm/d\nu$ stands for the Radon-Nikodym density of $m$ with respect to $\nu$.
In this definition the convention $0\cdot\iy=0$ is enforced, namely, when $m$ is absolutely continuous with respect to $\nu$, the second term vanishes.
When  $m$ is not absolutely continuous with respect to $\nu$, i.e.\ $\int \frac{dm}{d\nu}\, d\nu<1$, their $\varphi$\textbf{-entropy} is $+\iy$ as soon as $\lim_{x\ri+\iy}\frac{\varphi(x)}{x}=+\iy$.
The case of the usual entropy $\Ent_\varphi(\cdot)$ corresponds to the particular function $\varphi$ given by
\bqn{Entcla}
\fo x\in\RR_+,\qquad \varphi(x)&\df& x\ln(x)-x+1\eqn
\par\sm
Recall the framework of the introduction: $P$ and $\wi P$ are two Markov semigroups, respectively on the state spaces $V$ and $\wi V$.
Let $\Lambda$ and $\wi \Lambda$ be Markov kernels from $V$ to $\wi V$ and from $\wi V$ to $V$. We assume that $P$ and $\wi P$ admit invariant probability measures $\nu$ and $\wi \nu$
and that $\nu\Lambda=\wi\nu$ and $\wi\nu\wi\Lambda=\nu$. Estimates in the $\varphi$-entropy sense on the speed of convergence to equilibrium for $\wi P$
can be transferred to $P$ with the help of a c.m.i.r.:\par
\begin{theo}\label{theo10}
Assume that there exists a interweaving relation   from $P$ to $\wi P$ with warm-up distribution $\wu$ and that
\bqn{vareps2}
\fo \wi m_0\in\cP(\wi V),\,\fo t\geq 0,\qquad \Ent_\varphi(\wi m_0\wi P_t\vert\wi\nu)&\leq &\varepsilon(t, \Ent_\varphi(\wi m_0\vert\wi\nu))\eqn
for some function $\varepsilon\st \RR_+\times \overline{\RR}_+\ri \overline{\RR}_+$, which is
 non-decreasing with respect to the second variable.
Then we have
\bqn{espE2}
\fo  m_0\in\cP(V),\,\fo t\geq 0,\qquad \Ent_\varphi( m_0P_{\theta_t(\wu)}\vert\nu)&\leq &\varepsilon(t, \Ent_\varphi(m_0\vert\nu))\eqn
where $\theta_t$ is the translation operator on $\RR_+$.
\end{theo}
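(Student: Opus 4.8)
The plan is to transfer the $\varphi$-entropy estimate through the interweaving diagram, exploiting two classical facts about $\varphi$-entropies: (i) the data-processing inequality, namely that for any Markov kernel $K$ (from one probability space to another, with $\nu K$ playing the role of the reference measure), one has $\Ent_\varphi(mK\mid \nu K)\le \Ent_\varphi(m\mid\nu)$; and (ii) the fact that mixing over a probability distribution (here the law of $\wu$) cannot increase the $\varphi$-entropy, which is again an instance of (i) applied to the Markov kernel $P_\wu=\int_0^\infty P_t\,\P(\wu\in dt)$, together with joint convexity of $(m,\nu)\mapsto\Ent_\varphi(m\mid\nu)$. Both facts rest only on Jensen's inequality and are standard (see An\'e et al.\ \cite{MR2002g:46132}).

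First I would fix $m_0\in\cP(V)$ and $t\ge 0$ and push $m_0$ through $\Lambda$: since $\nu\Lambda=\wi\nu$, the data-processing inequality gives $\Ent_\varphi(m_0\Lambda\mid\wi\nu)\le\Ent_\varphi(m_0\mid\nu)$. Next, apply the hypothesis \eqref{vareps2} to the probability measure $\wi m_0\df m_0\Lambda$ at time $t$, obtaining $\Ent_\varphi(m_0\Lambda\,\wi P_t\mid\wi\nu)\le\varepsilon(t,\Ent_\varphi(m_0\Lambda\mid\wi\nu))$, and then use monotonicity of $\varepsilon$ in its second argument together with the previous bound to get $\Ent_\varphi(m_0\Lambda\,\wi P_t\mid\wi\nu)\le\varepsilon(t,\Ent_\varphi(m_0\mid\nu))$. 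Now push through $\wi\Lambda$: since $\wi\nu\wi\Lambda=\nu$, data-processing gives $\Ent_\varphi(m_0\Lambda\,\wi P_t\,\wi\Lambda\mid\nu)\le\varepsilon(t,\Ent_\varphi(m_0\mid\nu))$. The final step is to identify the left-hand side: by the intertwining relations $P_s\Lambda=\Lambda\wi P_s$ for all $s\ge0$ (the first gateway of the interweaving relation) and the factorization $\Lambda\wi\Lambda=P_\wu=\int_0^\infty P_s\,\P(\wu\in ds)$, we compute
\[
\Lambda\,\wi P_t\,\wi\Lambda \;=\; \Lambda\,\wi P_t\,\wi\Lambda \;=\; \bigl(\Lambda\wi\Lambda\bigr)\,\text{``shifted''},
\]
more precisely $m_0\Lambda\wi P_t\wi\Lambda$: writing $\Lambda\wi P_t = P_t\Lambda$ does not immediately help, so instead I would use that $\wi P_t$ commutes with $\wi P_s$ and apply the second gateway. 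The cleanest route is: $\Lambda\wi P_t\wi\Lambda$, and since $\wi P_t\wi\Lambda=\wi\Lambda P_t$ by the symmetric gateway $\Gate{\wi P}{\wi\Lambda}{P}$, we get $\Lambda\wi P_t\wi\Lambda=\Lambda\wi\Lambda P_t=P_\wu P_t=\int_0^\infty P_{s+t}\,\P(\wu\in ds)=P_{\theta_t(\wu)}$, where $\theta_t(\wu)$ denotes the law of $\wu+t$ (equivalently, $\theta_t$ acting on the distribution). Hence $m_0P_{\theta_t(\wu)}=m_0\Lambda\wi P_t\wi\Lambda$ and \eqref{espE2} follows.

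I do not expect a genuine obstacle here; the argument is a routine chaining of data-processing through the three arrows of Figure \ref{fig1}. The one point requiring a little care is the \emph{measurability and Tonelli-type justification} for treating $P_\wu P_t$ as the kernel $\int_0^\infty P_{s+t}\,\P(\wu\in ds)$ and for interchanging the mixing integral with the $\varphi$-entropy functional when verifying the intermediate inequalities --- but this is exactly the measurability hypothesis imposed on $P$ in the introduction, and the interchange for $\Ent_\varphi$ against a probability mixture is the joint-convexity/Jensen argument already invoked in the proof of Proposition \ref{symmetric}. A second minor point is the degenerate case where some intermediate measure fails to be absolutely continuous, so that an $\Ent_\varphi$ equals $+\infty$; there the inequalities hold trivially, so no separate treatment is needed. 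Finally, Theorem \ref{theo1} is recovered by specializing $\varphi$ to the function in \eqref{Entcla} and noting $\theta_t(\wu)$ is the law of $t+\wu$, which matches $m_0P_{t+\wu}$ in \eqref{espE}.
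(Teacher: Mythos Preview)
Your proposal is correct and follows essentially the same approach as the paper: both arguments chain the data-processing inequality \eqref{Xi} through the kernels $\Lambda$ and $\wi\Lambda$, apply the hypothesis \eqref{vareps2} to $\wi m_0=m_0\Lambda$, use monotonicity of $\varepsilon$ in its second argument, and identify $\Lambda\wi P_t\wi\Lambda=P_{\theta_t(\wu)}$ via the interweaving relations. Your ingredient (ii) about mixing is in fact unnecessary once you have this identification (as you correctly discover in the final step), and the paper dispenses with it for the same reason.
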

\begin{remark}
As in the introduction, for this estimate to be meaningful, one should furthermore require that
\bq
\fo E\in\RR_+,\qquad \lim_{t\ri +\iy} \varepsilon(t,E)&=&0\eq
\par
\end{remark}
\prooff{Proof of Theorem \ref{theo10}}
Consider $E$ and $\wi E$ two measurable spaces and $\Xi$ a Markov kernel from $E$ to $\wi E$.
Let $\wi m$ and $m$ be two probability measures on $E$. As a consequence of Jensen inequality, we have for any convex function $\varphi$ as above,
\bqn{Xi}
\Ent_{\varphi}(\wi m\Xi\vert m\Xi)&\leq & \Ent_{\varphi}(\wi m\vert m)
\eqn
(see e.g.\ \cite{MR1992499}).\par
The interweaving relation  between $P$ and $\wi P$ implies that for any $t\geq 0$, we have
\bq
\Lambda \wi P_t\wi\Lambda &=&P_{\theta_t(\wu)}\eq
It follows that  for any $m_0\in\cP(V)$,
\bq
\Ent_\varphi( m_0P_{\theta_t(\wu)}\vert\nu)&=&\Ent_\varphi( m_0\Lambda \wi P_t\wi\Lambda\vert\wi\nu\wi\Lambda)\\
&\leq & \Ent_\varphi( m_0\Lambda \wi P_t\vert\wi\nu)\eq
where \eqref{Xi} was applied with $\wi m\df m_0\Lambda \wi P_t$, $m\df\wi\nu$ and $\Xi\df \wi\Lambda$.
Taking into account \eqref{vareps2}, we get
\bq
\Ent_\varphi( m_0\Lambda \wi P_t\vert\wi\nu)&\leq & \varepsilon(t,\Ent_\varphi(m_0\Lambda\vert \wi\nu))\\
&=&\varepsilon(t,\Ent_\varphi(m_0\Lambda\vert\nu\Lambda))\\
&\leq & \varepsilon(t,\Ent_\varphi(m_0\vert\nu))
\eq
where we used again \eqref{Xi} with $\wi m\df m_0$, $m\df\nu$ and $\Xi\df \Lambda$.
\wwtbp\par
The traditional way to deduce a bound such as \eqref{vareps2} is via $\varphi$-Sobolev inequalities.
Without entering into the general theory, let us e.g.\ consider the case where $\wi V$ is a finite state space and $\wi P$ is generated by
an irreducible Markov generator $\wi L$. Denote $\wi\cA$ the set of positive functions defined on $\wi V$ with $\wi\nu[f]=1$ and assume that $\varphi$ is differentiable on $(0,+\iy)$
(in particular $\varphi'(1)=0$).
Consider the energy
\bq
\fo f\in\wi\cA,\qquad \wi\cE_\varphi(f,\varphi'(f))&\df&- \wi\nu[f\wi L[\varphi'(f)]]\eq
(the r.h.s.\ is always non-negative) and denote
\bq
\wi\alpha_\varphi&\df& \inf_{f\in\wi\cA\setminus\{\wi\un\}}\frac{\wi\cE_\varphi(f,\varphi'(f))}{\Ent_\varphi(f\cdot\wi\nu\vert\wi\nu)}\eq
where $\wi\un$ is the function only taking the value 1 on $\wi V$ and $f\cdot\wi\nu$ is the probability on $\wi V$ admitting the density $f$ w.r.t.\ $\wi\nu$.
The quantity $\wi\alpha_\varphi$ is non-negative and is called the $\varphi$-Sobolev constant.
Then \eqref{vareps2} holds with the function $\varepsilon$ given by
\bq
\fo t\geq 0,\,\fo E\geq 0,\qquad \varepsilon(t,E)&\df& \exp(-\wi\alpha_\varphi t)E\eq
\par
This result is obtained by differentiating the quantity $\Ent_\varphi(\wi m_0\wi P_t\vert\wi\nu)$ with respect to $t>0$, for any fixed $\wi m_0\in\cP(\wi V)$,
and by applying Gr\" onwall lemma. The validity of this approach is very general, up to the appropriate definition of the domain $\wi\cA$.
\par
In the classical case \eqref{Entcla} and when the finite generator $\wi L$ is assumed to be furthermore reversible, the energy is given by
\bq
\fo f\in\wi\cA,\qquad \wi\cE(f,\ln(f))&=&\frac12\sum_{x,y\in \wi V} (f(y)-f(x))(\ln(f(y))-\ln(f(x)))\, \wi\nu(x)\wi L(x,y)
\eq
and the corresponding constant $\wi \alpha$ is called the modified logarithmic Sobolev constant.
It is bounded below by the usual logarithmic Sobolev constant, obtained by replacing  $\wi\cE(f,\ln(f))$ by
\bq
4\wi\cE(\sqrt{f},\sqrt{f})&=&2\sum_{x,y\in \wi V} (\sqrt{f}(y)-\sqrt{f}(x))^2\, \wi\nu(x)\wi L(x,y)
\eq
in the above definitions. In the diffusion framework,  the modified and usual logarithmic Sobolev constant coincide (for the previous functional analysis assertions, see for instance the book of Ané et al.\ \cite{MR2002g:46132}).
\par\me
Let us consider the situation of a deterministic warm-up time: there exists $t_0\geq 0$ such that $\wu=\delta_{t_0}$, as in Section \ref{dwte}.
Assume that $(\wi P,\wi\nu)$ satisfies a modified logarithmic Sobolev inequality with constant $\wi\alpha>0$, so that
for any initial distribution $\wi m_0\in\cP(\wi V)$, we have
\bq
\fo t\geq 0,\qquad \Ent(\wi m_{t}\vert\wi \nu)&\leq & \exp(-\wi\alpha t)\Ent(\wi m_0\vert\wi\nu)\eq
\par
Theorem \ref{theo10} enables to get for $(P,\nu)$ that
for any initial distribution $m_0\in\cP( V)$, we have
\bq
\fo t\geq 0,\qquad
\Ent_{\varphi}(m_{t_0+t}\vert \nu)&\leq & \exp(-\wi\alpha t)\Ent_{\varphi}(m_0\vert\nu)\eq
Alternatively, taking into account that the relative entropy of the time marginal laws of a Markov process with respect to its invariant measure is always non-increasing
with respect to time (see e.g.\ \cite{MR1992499}), we get
\bqn{Villain}
\fo t\geq 0,\qquad
\Ent_{\varphi}(m_{t}\vert \nu)&\leq &  \exp(-\wi\alpha (t-t_0)_+)\Ent_{\varphi}(m_0\vert\nu)\eqn
In this bound, the time $t_0$  clearly appears as a warm-up period. The fact that no contractive estimate of $\Ent_{\varphi}(m_{t}\vert \nu)$ can be deduced for $t\in[0,t_0]$
relates \eqref{Villain} to hypocoercive bounds (see e.g.\ Villani \cite{MR2562709}).\par
These considerations were illustrated by the classical and discrete examples of Subsection \ref{cadLp}. In Subsection \ref{jLp}, we presented a interweaving relation  with a random warm-up time
between jump Laguerre processes and classical Laguerre processes. It enables to get estimates on convergence to equilibrium in entropy sense for non-reversible jump
processes without the a priori knowledge of corresponding modified logarithmic Sobolev inequalities. It shows the  applicative potential of c.m.i.r.
 \begin{rem}
In general, it is not possible to deduce from a bound such as \eqref{espE2} an estimate on
$\Ent_\varphi( m_0P_{t}\vert\nu)$ for given large $t\geq 0$, except in the case of a deterministic warm-up time.
Indeed, consider $P$ the deterministic semigroup generated on the circle $\TT\df\RR/(2\wu\ZZ)$ by the usual derivation $\pa$.
Starting from $x_0\in\TT$, the position at time $t\geq 0$ of an associated Markov process is $x_0+t\ [2\wu]$. The associated invariant measure $\nu$ is the uniform distribution over $\TT$.
Let $\wu$ be the uniform distribution over $[0,2\wu]$.
For any $t\geq 0$, we have $\Ent_\varphi( m_0P_{\theta_t(\wu)}\vert\nu)=0$ for any initial distribution $m_0$, while $\Ent_\varphi( m_0P_{t}\vert\nu)=+\iy$ when $m_0$ is a Dirac mass.
\end{rem}
\par
\begin{rem}
Another approach to convergence to equilibrium is based on strong stationary times, see Aldous  and Diaconis \cite{MR876954} and Diaconis and Fill \cite{MR1071805} for seminal works about this alternative point of view.
It is more probabilistic in  spirit, since it constructs stopping times $\tau$ such that the position of the underlying Markov process is at equilibrium  and independent from $\tau$.
Furthermore, it is an important motivation for the investigation of intertwining relations.
Thus it is natural to wonder if interweaving relations  enable the transfer of strong stationary times. Unfortunately we did not find a satisfactory procedure, especially when the warm-up distribution is not a Dirac mass.
Nevertheless,
strong stationary times are often used due to their close relation to the convergence to equilibrium in the separation sense (see e.g.\ Diaconis and Fill \cite{MR1071805}),
and \cmirs\  enable to directly transfer corresponding estimates.\par\sm
Recall that the separation discrepancy $\fs(m,\nu)$ between two probability measures $m$ and $\nu$ on the same state space is defined as
\bq
\fs(m,\nu)&\df& \esssup_{\nu} 1-\frac{dm}{d\nu}\eq
\par
The separation discrepancy is in fact a limit case of $\varphi$-entropies. More precisely, for $p\geq 1$ , consider the convex mapping
\bq
\fo x\in\RR_+,\qquad \varphi_p(x)&\df& (1-x)_+^p\eq
where $(\cdot)_+$ stands for the non-negative part. It is not difficult to show that
for any probability measures $m$ and $\nu$ on the same state space, we have
\bq
\lim_{p\ri+\iy} \lt(\Ent_{\varphi_p}(m,\nu)\rt)^{1/p}&=&\fs(m,\nu)\eq\par
This result in conjunction with Theorem \ref{theo1} show that we can transfer separation estimates through c.m.i.r.
More precisely, assume that we have a  \cmir\  with warm-up distribution $\wu$ between the ergodic semigroups $P$ and $\wi P$, with invariant probability $\nu$ and $\wi\nu$.
Let $m_0$ be an initial distribution on $V$ and denote $\wi m_0\df m_0\Lambda$.
Assume that we have a function $\wi\varepsilon\st\RR_+\ri\RR_+$ such that
\bq
\fo t\geq 0,\qquad \fs(\wi m_0\wi P_t, \wi\nu)&\leq & \wi\varepsilon(t)\eq
Since we have for any $p\geq 1$ and any probability measure $\wi m$ on $\wi V$,
\bq
\Ent_{\varphi_p}(\wi m,\wi \nu)&\leq & \fs(\wi m,\wi\nu)^p\eq
Theorem \ref{theo1} implies that
\bq
\fo p\geq 1,\,\fo t\geq 0,\qquad \Ent_{\varphi_p}(m_0P_{\theta_t(\wu)},\nu)&\leq & \wi\varepsilon(t)^p\eq
It remains to take the power $1/p$ and to let $p$ go to infinity to get
\bq
\fo t\geq 0,\qquad \fs(m_0P_{\theta_t(\wu)},\nu)&\leq & \wi\varepsilon(t)\eq
which corresponds to the wanted separation estimate transfer.
\end{rem}

\subsection{Hyperboundedness}

As in the previous subsection, the underlying principle for the transfer of hyperboundedness via \cmirs\  is convexity, so that the Orlicz spaces are the natural framework here,
not only the $\esL^p$ spaces, for $p\geq 2$, as stated in Theorem \ref{theo2}.\par\sm
Let us recall the notion of Orlicz spaces (for a general introduction, see for instance the book of Rao and Ren \cite{MR1113700}).
Let $\varphi\st\RR\ri\RR_+$ be a Young function: it is a even convex function $\varphi\not=0$ satisfying $\varphi(0)=0$.
When $E$ is a measurable space endowed with a probability measure $m$, the \textbf{Orlicz space} $\esL^{\varphi}(m)$ is the vector space
of measurable functions $f\st E\ri\RR$ such that
\bq
\lVe f\rVe_{ \esL^{\varphi}(m)}&\df& \inf\{r>0\st \int \varphi(f/r)\, dm\leq 1\}\eq
is finite. The quantity $\lVe \cdot\rVe_{ \esL^{\varphi}(m)}$ defines a norm on $\esL^{\varphi}(m)$, when the functions are identified up to a $m$-negligible set.
The key property of Orlicz spaces we will need is:
\begin{lem}\label{OrliczLambda}
Consider $\Lambda$ a Markov kernel from $E$ to another measurable space $\wi E$.
Let $\wi m$ be the image of the probability measure $m$ on $E$ by $\Lambda$.
For any measurable function $f\st \wi E\ri\RR$, we have
\bq
\lVe \Lambda[f]\rVe_{\esL^{\varphi}(m)}&\leq & \lVe f\rVe_{\esL^{\varphi}(\wi m)}
\eq
\end{lem}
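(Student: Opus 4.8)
The plan is to unwind the definition of the Orlicz norm and use Jensen's inequality applied to the Markov kernel $\Lambda$, exactly as convexity was used in the $\varphi$-entropy argument of the previous subsection. Fix a measurable $f\st\wi E\ri\RR$ and set $r\df\lVe f\rVe_{\esL^{\varphi}(\wi m)}$; we may assume $r<\iy$, otherwise there is nothing to prove. If $r>0$, then by definition of the norm as an infimum, for every $\rho>r$ we have $\int\varphi(f/\rho)\,d\wi m\leq 1$ (using that $s\mapsto\int\varphi(f/s)\,dm$ is non-increasing, since $\varphi$ is convex with $\varphi(0)=0$, hence $s\mapsto\varphi(u/s)$ is non-increasing on $(0,\iy)$ for each fixed $u$). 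The first step is therefore to show that $\int\varphi(\Lambda[f]/\rho)\,dm\leq 1$ as well, for every such $\rho$.

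The key step is the pointwise bound $\varphi(\Lambda[f](x)/\rho)\leq \Lambda[\varphi(f/\rho)](x)$ for $m$-a.e.\ $x\in E$. This is Jensen's inequality: for each $x$, $\Lambda(x,\cdot)$ is a probability measure on $\wi E$, $\Lambda[f](x)=\int f(y)\,\Lambda(x,dy)$, and $\varphi$ is convex, so $\varphi\bigl(\int (f(y)/\rho)\,\Lambda(x,dy)\bigr)\leq\int\varphi(f(y)/\rho)\,\Lambda(x,dy)=\Lambda[\varphi(f/\rho)](x)$. Integrating this inequality against $m$ and using that $\wi m=m\Lambda$ is the image measure, we get
\[
\int_E\varphi\bigl(\Lambda[f]/\rho\bigr)\,dm\ \leq\ \int_E\Lambda[\varphi(f/\rho)]\,dm\ =\ \int_{\wi E}\varphi(f/\rho)\,d\wi m\ \leq\ 1.
\]
Hence $\rho$ is admissible in the infimum defining $\lVe\Lambda[f]\rVe_{\esL^{\varphi}(m)}$, so $\lVe\Lambda[f]\rVe_{\esL^{\varphi}(m)}\leq\rho$. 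Letting $\rho\downarrow r$ gives $\lVe\Lambda[f]\rVe_{\esL^{\varphi}(m)}\leq r=\lVe f\rVe_{\esL^{\varphi}(\wi m)}$. The case $r=0$ is handled separately: then $\int\varphi(f/\rho)\,d\wi m\leq 1$ for all $\rho>0$, the same computation gives $\int\varphi(\Lambda[f]/\rho)\,dm\leq 1$ for all $\rho>0$, so $\lVe\Lambda[f]\rVe_{\esL^{\varphi}(m)}=0$ as well.

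The only real points requiring care — and the mild technical obstacle — are measurability and integrability issues: one must know that $\Lambda[\varphi(f/\rho)]$ is a well-defined measurable function (true since $\Lambda$ is a Markov kernel and $\varphi(f/\rho)\geq 0$ is measurable), that the application of Jensen's inequality is legitimate even when $\int\varphi(f/\rho)\,\Lambda(x,dy)=+\iy$ for some $x$ (in which case the inequality is trivial), and that $\Lambda[f](x)$ is defined $m$-a.e., which follows because $\int\lvert f\rvert\,d\wi m<\iy$ is forced once $\lVe f\rVe_{\esL^{\varphi}(\wi m)}<\iy$ (as $\varphi$, being a nonzero even convex function vanishing at $0$, grows at least linearly at infinity). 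None of these is deep; the heart of the matter is simply Jensen's inequality together with the identity $m\Lambda=\wi m$.
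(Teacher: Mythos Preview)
Your proof is correct and follows essentially the same approach as the paper: apply Jensen's inequality pointwise to get $\varphi(\Lambda[f]/\rho)\leq\Lambda[\varphi(f/\rho)]$, integrate against $m$, use $m\Lambda=\wi m$, and conclude via the definition of the Orlicz norm as an infimum. You are simply more careful than the paper about the edge cases (whether the infimum is attained, the case $r=0$, measurability and integrability), but the core argument is identical.
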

\proof
This is an immediate consequence of convexity.
Indeed, by
Jensen's inequality, we have $m$-a.s.\ and for any $r\geq 0$,
\bq
\varphi(\Lambda[f/r])&\leq & \Lambda[\varphi(f/r)]\eq
Integrating with respect to $m$, we get
\bq
\int \varphi(\Lambda[f/r])\, dm&\leq & \int \Lambda[\varphi(f/r)]\, dm\\
&=& \int \varphi(f/r)\, d\wi m
 \eq
and it remains to take the infimum of the $r>0$ such that
$\int \varphi(f/r)\, d\wi m\leq 1$ to get the announced result.
\wwtbp
\par
As in the introduction,
let be given $P$ a Markov semigroup from $V$ to $V$ and $\wi P$  a Markov semigroup from $\wi V$ to $\wi V$.
Assume that $\nu$ and $\wi \nu$ are respectively invariant probability measures for $P$ and $\wi P$ and that an \cmir\  holds,
as described in Figure \ref{fig1}, with Markov kernels $\Lambda$ from $V$ to $\wi V$ and $\wi\Lambda$ from $\wi V$ to $V$,
as well as warm-up distribution $\wu$. As usual, $\nu\Lambda$ and $\wi\nu\wi\Lambda$ are respectively invariant for $\wi P$ and $P$.
In case of non-uniqueness of these invariant probability measures, we furthermore assume that $\wi\nu=\nu\Lambda$ and $\nu=\wi\nu\wi\Lambda$.
Here is an extension of Theorem \ref{theo2}:
\begin{theo}\label{Orliczcmir}
Assume that for some time $T\geq 0$ and some Young function $\varphi$, we have in the operator norm
\bqn{hypercon2}
\vvv \wi P_T\vvv_{\esL^2(\wi\nu)\ri\esL^{\varphi}(\wi\nu)}&\leq &1\eqn
Then we get
\bqn{eqtheoO}
  \vvv P_{T+\wu}\vvv_{\esL^2(\nu)\ri\esL^{\varphi}(\nu)}&\leq & 1\eqn
\end{theo}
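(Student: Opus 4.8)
The idea is the same as for the entropy transfer (proof of \refthm{theo1}): factor the operator $P_{T+\wu}$ through the reference semigroup using the \cmir, and then strip off the two Markov kernels by the contraction property of Orlicz norms under Markov kernels, i.e.\ \reflemma{OrliczLambda}.

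First I would rewrite the left-hand operator. Since $P$ is a semigroup and $T+\wu$ is an (independent) sum,
\[
P_{T+\wu}\ =\ \int_0^{+\iy}P_{T+s}\,\Mwu\ =\ P_T\,\Pwu\ =\ P_T\,\Lambda\wi\Lambda ,
\]
and by the intertwining relation $\Gate{P}{\Lambda}{\wi P}$, which in the present setting reads $P_T\Lambda=\Lambda\wi P_T$ as bounded operators, we obtain the factorization
\[
P_{T+\wu}\ =\ \Lambda\,\wi P_T\,\wi\Lambda ,
\]
i.e.\ precisely the outer path of the commuting diagram of Figure~\ref{fig1}. Here the two occurrences of $\Lambda$ and $\wi\Lambda$ are read as operators between the relevant $\esL^2$ (or Banach) spaces, and the standing assumptions $\wi\nu=\nu\Lambda$ and $\nu=\wi\nu\wi\Lambda$ guarantee that $\Lambda$ sends $\nu$ to $\wi\nu$ and $\wi\Lambda$ sends $\wi\nu$ to $\nu$.

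Now fix $f\in\esL^2(\nu)$. I would apply \reflemma{OrliczLambda} twice, once with the genuine Young function $\varphi$ and once with the Young function $x\mapsto x^2$, noting that $\esL^{x^2}(m)=\esL^2(m)$ isometrically (indeed $\int(f/r)^2\,dm\le1\iff r\ge\lVe f\rVe_{\esL^2(m)}$). Applying \reflemma{OrliczLambda} to the Markov kernel $\wi\Lambda$ (from $\wi V$ to $V$, with $m=\wi\nu$, image $\wi\nu\wi\Lambda=\nu$) and $x\mapsto x^2$ gives $\lVe\wi\Lambda f\rVe_{\esL^2(\wi\nu)}\le\lVe f\rVe_{\esL^2(\nu)}$; the hypothesis \eqref{hypercon2} gives $\lVe\wi P_T(\wi\Lambda f)\rVe_{\esL^\varphi(\wi\nu)}\le\lVe\wi\Lambda f\rVe_{\esL^2(\wi\nu)}$; and \reflemma{OrliczLambda} applied to the Markov kernel $\Lambda$ (from $V$ to $\wi V$, with $m=\nu$, image $\nu\Lambda=\wi\nu$) and $\varphi$ gives $\lVe\Lambda g\rVe_{\esL^\varphi(\nu)}\le\lVe g\rVe_{\esL^\varphi(\wi\nu)}$ for $g=\wi P_T\wi\Lambda f$. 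Chaining these with the factorization yields
\[
\lVe P_{T+\wu}f\rVe_{\esL^\varphi(\nu)}\ =\ \lVe \Lambda\wi P_T\wi\Lambda f\rVe_{\esL^\varphi(\nu)}\ \le\ \lVe \wi P_T\wi\Lambda f\rVe_{\esL^\varphi(\wi\nu)}\ \le\ \lVe \wi\Lambda f\rVe_{\esL^2(\wi\nu)}\ \le\ \lVe f\rVe_{\esL^2(\nu)} ,
\]
which is \eqref{eqtheoO}. Then \refthm{theo2} follows as the special case $\varphi(x)=|x|^{p(\wi\alpha t)}$, $T=t$, since these are Young functions and $\esL^\varphi=\esL^{p(\wi\alpha t)}$.

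The argument has no substantial obstacle; the only points requiring care are (i) that the identity $P_{T+\wu}=\Lambda\wi P_T\wi\Lambda$ is valid as bounded operators in the chosen functional framework, which is exactly where the \cmir\ and the standing assumptions on $\nu,\wi\nu$ enter, and (ii) the bookkeeping between the $\esL^2$ scale and the Orlicz scale, namely checking that $x\mapsto x^2$ and $\varphi$ are legitimate Young functions so that \reflemma{OrliczLambda} applies and that $\esL^{x^2}=\esL^2$.
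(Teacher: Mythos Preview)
Your proof is correct and follows essentially the same approach as the paper's. Both arguments rest on the factorization $P_{T+\wu}=\Lambda\,\wi P_T\,\wi\Lambda$ and then apply \reflemma{OrliczLambda} twice (once with $\varphi$ for $\Lambda$, once with $x\mapsto x^2$ for $\wi\Lambda$); the only cosmetic difference is that the paper phrases this via submultiplicativity of operator norms whereas you chain the pointwise norm inequalities for a fixed $f$.
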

\proof
As in the proof of Theorem \ref{theo10}, the starting point is
\bq
\Lambda \wi P_T\wi\Lambda &=&P_{T+\wu}\eq
It follows that
\bq
 \vvv P_{T+\wu}\vvv_{\esL^2(\nu)\ri\esL^{\varphi}(\nu)}&\leq &
 \vvv\Lambda\vvv_{\esL^{\varphi}(\wi \nu)\ri \esL^{\varphi}(\nu)}
 \vvv \wi P_T\vvv_{\esL^2(\wi\nu)\ri\esL^{\varphi}(\wi\nu)}
 \vvv\wi\Lambda\vvv_{\esL^2(\nu)\ri\esL^2(\wi\nu)}\\
 &\leq &
 \vvv\Lambda\vvv_{\esL^{\varphi}(\wi \nu)\ri \esL^{\varphi}(\nu)}
 \vvv\wi\Lambda\vvv_{\esL^2(\nu)\ri\esL^2(\wi\nu)}\\
\eq
\par
Lemma \ref{OrliczLambda} applied with $m=\nu$ and $\wi m=\wi\nu$ (recall that $\nu\Lambda=\wi\nu$) implies
that
\bq  \vvv\Lambda\vvv_{\esL^{\varphi}(\wi \nu)\ri \esL^{\varphi}(\nu)} &=&1\eq
Considering the Young function $\RR\ni x\mapsto x^2$, Lemma \ref{OrliczLambda} applied with $m=\wi\nu$ and $m=\nu$ (recall that $\wi\nu\wi\Lambda=\nu$) implies
that
\bq \vvv\wi\Lambda\vvv_{\esL^2(\nu)\ri\esL^2(\wi\nu)}&=&1\eq
concluding the proof of the wanted bound.
\wwtbp
\par
Theorem \ref{theo2} is a consequence of Theorem \ref{theo10}, applied, for fixed $t\geq 0$,  with $T=t$ and
\bq
\varphi  \st \RR\ni x&\mapsto & x^{p(\wi\alpha t)}\eq
Note that due to the warm-up distribution, it is not possible to deduce from the conclusion of Theorem \ref{theo2}
that the semigroup $P$ satisfies a logarithmic Sobolev inequality (for the classical links between the latter inequality and hypercontractivity,
again see e.g.\ Ané et al.\ \cite{MR2002g:46132}).

\subsection{Proof of Theorem \ref{theo3}}

Assume first  that a cut-off phenomenon occurs for the family $(P^{(n)})_{n\in\ZZ_+}$, with cut-off times $(t^{(n)})_{n\in\ZZ_+}$,
and let us show the same is true for
$(\wi P^{(n)})_{n\in\ZZ_+}$.\par\sm
Consider the Young function $\RR\ni x\mapsto \lve x-1\rve$. The associated entropy between the probability measures $m$ and $\nu$ is just twice the total variation
\bq
2\lVe m-\nu\rVe_{\mathrm{tv}}&=&\int \lve \frac{dm}{d\nu}-1\rve\, d\nu+1-\int \frac{dm}{d\nu}\, d\nu \eq
The proof of
 Theorem \ref{theo10} with this particular Young function shows that for any $n\in\ZZ_+$,
 \bq
 \fo t\geq 0,\,\fo \wi m_0\in\cP(\wi V^{(n)}),\qquad \lVe \wi m_0\wi P^{(n)}_{t_0^{(n)}+t}-\wi\nu^{(n)}\rVe_{\mathrm{tv}}&\leq &  \lVe \wi m_0\wi\Lambda P^{(n)}_{t}-\nu^{(n)}\rVe_{\mathrm{tv}}\\
 &\leq & \frd^{(n)}(t)
\eq
where $\frd^{(n)}$ is given in \eqref{frdn}. Considering a similar definition of $\wi\frd^{(n)}$ for the semigroup $\wi P^{(n)}$, we obtain
\bq
\fo t\geq 0,\qquad \wi\frd^{(n)}(t_0^{(n)}+t)&\leq & \frd^{(n)}(t)\eq
Taking into account that for any $n\in\ZZ_+$, the function $\wi\frd^{(n)}$ is non-increasing, we deduce from the cut-off phenomenon for $P$ and from \eqref{neglig} that for any $r>0$,
\bq
\limsup_{n\ri\iy}\wi\frd^{(n)}((1+r)t^{(n)})
&\leq &
\limsup_{n\ri\iy}\wi\frd^{(n)}(t_0^{(n)}+(1+r/2)t^{(n)})\\
&\leq &
\lim_{n\ri\iy}\frd^{(n)}((1+r/2)t^{(n)})\\
&=&0\eq
\par
For the other point in the definition of the cut-off phenomenon, assume by contradiction that for some $r_0\in (0,1)$, we have
\bqn{contrad}
\liminf_{n\ri\iy}\wi\frd^{(n)}((1-r_0)t^{(n)})&<&1\eqn
By the assumed symmetry of the \cmirs\  between the sequence $(P^{(n)})_{n\in\ZZ_+}$ and $(\wi P^{(n)})_{n\in\ZZ_+}$,
we show as above that
\bq
\fo t\geq 0,\qquad \frd^{(n)}(t_0^{(n)}+t)&\leq & \wi\frd^{(n)}(t)\eq
Taking into account that for any $n\in\ZZ_+$, the function $\frd^{(n)}$ is non-increasing, we deduce from \eqref{contrad} and from \eqref{neglig} that
\bq
\lim_{n\ri\iy}\frd^{(n)}((1-r_0/2)t^{(n)})&\leq &
\liminf_{n\ri\iy}\frd^{(n)}(t_0^{(n)}+(1-r_0)t^{(n)})\\
&\leq &
\liminf_{n\ri\iy}\wi\frd^{(n)}((1-r_0)t^{(n)})\\
&<&1\eq
which is in contradiction with the cut-off phenomenon  for the family $(P^{(n)})_{n\in\ZZ_+}$.
Thus we get that for any $r\in(0,1)$,
\bq
\lim_{n\ri\iy}\wi\frd^{(n)}((1-r)t^{(n)})&=&1\eq
and this ends the proof that a cut-off phenomenon occurs for the family $(P^{(n)})_{n\in\ZZ_+}$
with cut-off times $(t^{(n)})_{n\in\ZZ_+}$.\par\sm
The remaining claims of Theorem \ref{theo3} are proven by a similar line of reasoning.

\vskip2cm
\hskip50mm
\vbox{
\copy5
\vskip5mm
\copy6
}

\end{document}